\newcommand{\colorprint}[1]{}
\theoremstyle{plain}
\newtheorem{theorem}{Theorem}
\newtheorem{prop}[theorem]{Proposition}
\newtheorem{lemma}[theorem]{Lemma}
\newtheorem{corollary}[theorem]{Corollary}
\newtheorem{problem}[theorem]{Problem}
\newtheorem{conjecture}[theorem]{Conjecture}
\newcommand{\dom}{\rightarrow}
\newcommand{\AY}[1]{#1}
\newcommand{\jbj}[1]{{#1}}
\newcommand{\jbjb}[1]{{#1}}
\newcommand{\LineBLUE}[1]{\draw [color=blue, line width=0.03cm] #1}
\newcommand{\LineRED}[1]{\draw [color=red, line width=0.03cm] #1}
\newcommand{\LineBLUEdotted}[1]{\draw [color=blue, dotted, line width=0.03cm] #1}
\newcommand{\LineREDdotted}[1]{\draw [color=red, dotted, line width=0.03cm] #1}
\tikzstyle{vertexX}=[circle,draw, top color=gray!10, bottom color=gray!70, minimum size=14pt, scale=0.6, inner sep=0.1pt]
\tikzstyle{vertexY}=[circle,draw, top color=green!10, bottom color=green!70, minimum size=14pt, scale=0.6, inner sep=0.1pt]
\tikzstyle{vertexZ}=[circle,draw, top color=orange!10, bottom color=orange!70, minimum size=14pt, scale=0.6, inner sep=0.1pt]
\begin{document}
\bibliographystyle{plain}
% \maketitle
%opening
\title{Supereulerian 2-edge-coloured graphs\thanks{Research supported by the Independent Research Fund Denmark under grant number DFF 7014-00037B.}}
\author{J. Bang-Jensen\thanks{Department of Mathematics and Computer Science, University of Southern Denmark, Odense, Denmark (email: jbj@imada.sdu.dk). 
Part of this work was done while the author was on sabbatical at  INRIA Sophia Antipolis. Hospitality and financial support is greatefully acknowledged. Ce travail a b\'en\'efici\'e d'une aide du gouvernement français, 
g\'er\'ee par l'Agence Nationale de la Recherche au titre du projet Investissements d’Avenir UCAJEDI portant la r\'ef\'erence no ANR-15-IDEX-01.}
    \and Thomas Bellitto\thanks{Faculty of Mathematics, Informatics and Mechanics, University of Warsaw, Poland / Department of Mathematics and Computer Science, University of Southern Denmark, Odense, Denmark. This author is also supported by the European Research Council (ERC) under the European Union’s Horizon 2020 research and innovation program Grant Agreement 7147 (email: thomas.bellitto@mimuw.edu.pl)}
\and A. Yeo\thanks{Department of Mathematics and Computer Science, University of Southern Denmark, Odense, Denmark (email:yeo@imada.sdu.dk) and
Department of Pure and Applied Mathematics, University of Johannesburg, Auckland Park, 2006 South Africa}}

  \maketitle

  \begin{abstract}

   A 2-edge-coloured graph $G$ is {\bf supereulerian} if $G$ contains a spanning closed trail in which the edges alternate in colours. An {\bf eulerian factor} of a 2-edge-coloured  graph is a collection of vertex disjoint induced subgraphs which cover all the vertices of $G$ such that each of these subgraphs is supereulerian.
We give a polynomial algorithm to test if a 2-edge-coloured graph has an eulerian factor and to produce one when it exists.
A 2-edge-coloured graph is {\bf (trail-)colour-connected} if it contains a pair of alternating $(u,v)$-paths ($(u,v)$-trails) whose union is an alternating  closed walk for every pair of distinct vertices $u,v$. 
A 2-edge-coloured graph is {\bf M-closed}  if $xz$ is an edge of $G$ whenever some vertex $u$ is joined to both $x$ and $z$ by edges of the same colour. M-closed 2-edge-coloured graphs, introduced in \cite{balbuenaDMTCS21}, form  a rich generalization of 2-edge-coloured complete graphs.
We show that if $G$ is an extension of an M-closed 2-edge-coloured complete graph, then $G$ is supereulerian if and only if $G$ is trail-colour-connected and  has an eulerian factor.
We also show that for general 2-edge-coloured graphs it is NP-complete to decide whether the graph is supereulerian. Finally we pose a number of open problems.

\noindent{}{\bf Keywords:} 2-edge-coloured graph; alternating hamiltonian cycle; supereulerian; alternating cycle; eulerian factor; extension of a 2-edge-coloured graph.

% for generalizations of  2-edge-coloured complete multipartite graphs.

  \end{abstract}

  \section{Introduction}
  \jbjb{In this paper the graphs we deal with may contain  parallel edges. For readability we will use the word graphs instead of the more correct name multigraphs. All our results are valid for multigraphs.}
Edge-coloured graphs form a very interesting generalization of (directed) graphs, a fact that has been used many times in the literature (see e.g. \cite{bangDM165,bangDM188,haggkvistC9} and \cite[Chapter 16]{bang2009}). As an example consider the conversion of a given digraph $D=(V,A)$ to a 2-edge-coloured bipartite graph $G(D)$: The vertex set of $G(D)$ is $V\cup \{w_{uv}|uv\in A\}$ and the set of edges of $G(D)$  consist of and edge $uw_{uv}$ of colour 1 and an edge $w_{uv}v$ of colour 2 for every arc $uv\in A$. It is easy to see that every directed path, cycle, trail and walk, respectively in $D$ corresponds to a path, cycle, trail and walk, respectively in $G(D)$ where the colours alternate between 1 and 2. The converse also holds when the path, trail or walk must start and end in a vertex of $V$. 

In Section \ref{sec22} we describe another correspondence (the BB-correspondence) between bipartite digraphs and bipartite 2-edge-coloured graphs which immediately implies that it is NP-complete to decide whether a 2-edge-coloured graph has a hamiltonian cycle whose edges alternate in colours (we call such a cycle {\bf alternating}), see Theorem \ref{thm:npcbip2ec}. Thus it makes sense to identify classes of 2-edge-coloured graphs for which one can solve problems such as alternating hamiltonian cycle, longest alternating cycle etc. in polynomial time. This has been the topic of many papers in the past, see e.g
\cite{bangDM188, bankfalvi1968,benkouarLNCS557,chetwyndJGT16,balbuenaDAM229,balbuenaDMTCS21,dasDM43,liDM340,liGC35,manoussakisDAM56,saadCPC5}.\\

The main topic of this paper is supereulerian 2-edge-coloured graphs, that is, 2-edge-coloured graphs that have a spanning alternating closed trail. 
Note that we also want the first and last edge of an alternating closed trail to have different colours.
In order to obtain our results we also derive new results on 2-edge-coloured graphs with an alternating hamiltonian cycle.
Bankfalvi and Bankfalvi \cite{bankfalvi1968} obtained a characterization of 2-edge-coloured complete graphs with an alternating hamiltonian cycle
(for an equivalent formulation of their result see Theorem \ref{thm:HCchar}), thus answering a question of Erd\"os.  
Das and Rao \cite{dasDM43} generalized this result  to spanning closed trails with prescribed degrees at every vertex, that is, we are given an even  positive number $f(v)$ for each vertex of the graph and  seek a spanning closed trail $T$ such that every vertex $v$ has degree exactly $f(v)$ in $T$. Bang-Jensen and Gutin \cite{bangDM188}  gave a polynomial algorithm for the more general problem of finding a longest alternating trail $T_{f\geq}$ in a 2-edge-coloured complete graph that visits each vertex $v$ at most $f(v)>0$ times. When $f(v)=2$ this solves the longest alternating cycle problem, previously solved by Saad \cite{saadCPC5}, and the problem solved by Das and Rao has a solution if and only if the length of $T_{f\geq}$ is exactly $\frac{1}{2}\sum_{v\in V}f(v)$.  None of the results above answer the question of when a 2-edge-coloured complete graph is supereulerian.  The key tool in \cite{bangDM188} is to study longest alternating cycles in extensions of 2-edge-coloured complete graphs (defined below). In order to be able to use a similar approach  we first show how the characterization of hamiltonian M-closed 2-edge-coloured graphs in \cite{balbuenaDMTCS21} can be extended to  a characterization of those extensions of M-closed 2-edge-coloured graphs which have an alternating hamiltonian cycle (Theorem \ref{thm:extMclHC}). We then show how this characterization can be used to derive a characterization of supereulerian extensions of M-closed 2-edge-coloured graphs (Theorem \ref{thm:exMclEchar}).

We also show that, as it is the case for graphs \cite{pulleyblankJGT3}  and digraphs \cite{bangTCS526}  for general 2-edge-coloured graphs it is NP-complete to decide whether the input is supereulerian. Finally we consider another generalization of 2-edge-coloured complete graphs, namely 2-edge-coloured complete multipartite graphs.

  \section{Notation and Preliminaries}

  Notation not defined here will be consistent with \cite{bang2009}.

  \jbjb{In this paper, whenever we talk about a 2-edge-coloured graph, we will assume that its edges are coloured by colours 1 and 2. In figures we will use red and blue edges instead of numbers 1 and 2.}
Let $G=(V,E)$ be a graph and let $\phi:E\rightarrow \{1,2\}$ be a 2-edge-colouring of $E$.
A path, cycle,trail or walk $X$ in $G$ is {\bf alternating} if the edges of $X$ alternate between colours 1,2.

\subsection{Colour-connectivity}

The graph $G$ is {\bf colour-connected} if there exist two alternating $(u,v)$-paths $P_1,P_2$ whose union is an alternating walk for every choice of distinct vertices $u,v$.

\begin{lemma}\cite{bangDM188}
One can decide in polynomial time whether a given 2-edge-coloured graph is colour-connected.
\end{lemma}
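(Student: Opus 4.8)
The plan is to show that colour-connectivity is equivalent to a strong-connectivity condition in an auxiliary digraph, which is then testable in polynomial time by standard means.

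\textbf{Auxiliary digraph and correspondence.} From $G=(V,E)$ build a digraph $D=D(G)$ on vertex set $V\times\{1,2\}$; write $v^{(c)}$ for $(v,c)$, read as the state ``at $v$, having just used an edge of colour $c$''. For each edge $xy\in E$ of colour $c$, add the arcs $x^{(3-c)}\to y^{(c)}$ and $y^{(3-c)}\to x^{(c)}$ (traversing $xy$, from $x$ respectively from $y$, as a legal step right after an edge of colour $3-c$). Then $D$ has $2|V|$ vertices and $2|E|$ arcs and is built in linear time. A straightforward induction on length shows: for distinct $u,v$ and colours $a,b$, there is an alternating $(u,v)$-walk whose first edge (at $u$) has colour $a$ and whose last edge (at $v$) has colour $b$ \emph{if and only if} $v^{(b)}$ is reachable from $u^{(3-a)}$ in $D$.

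\textbf{Reduction to strong components.} I would then prove the equivalences: $G$ is colour-connected $\iff$ every pair of distinct vertices $u,v$ lies on a common alternating closed walk $\iff$ for all distinct $u,v$ there are colours $i,j$ with $u^{(i)}$ and $v^{(j)}$ in a common strong component of $D$. For the first equivalence, ``$\Rightarrow$'' is immediate (the union of the two required paths is such a closed walk), while for ``$\Leftarrow$'' one cuts an alternating closed walk through $u$ and $v$ open at $u$ and at $v$ to get two alternating $(u,v)$-walks, one with first/last colour $(a,b)$ and the other with first/last colour $(3-a,3-b)$ (alternation of the closed walk at the two cut points forces the first colours to differ and the last colours to differ); the delicate part is replacing these by two alternating $(u,v)$-\emph{paths} with the same property, see the obstacle below. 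For the second equivalence, use the correspondence above together with the fact that the involution $\sigma(v^{(c)})=v^{(3-c)}$ is an anti-automorphism of $D$ (it sends each arc $x^{(3-c)}\to y^{(c)}$ to the reversed arc $y^{(3-c)}\to x^{(c)}$): ``a common alternating closed walk through $u$ and $v$'' amounts to the existence of colours $a,b$ with $u^{(3-a)}\to\cdots\to v^{(b)}$ and $u^{(a)}\to\cdots\to v^{(3-b)}$ in $D$, and applying $\sigma$ to the second directed walk turns it into $v^{(b)}\to\cdots\to u^{(3-a)}$, so the two conditions together say exactly that $u^{(3-a)}$ and $v^{(b)}$ lie in a common strong component. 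Hence it suffices to compute the strong components of $D$ once (in linear time) and, for each of the $\binom{|V|}{2}$ pairs $\{u,v\}$, check whether a single strong component meets both $\{u^{(1)},u^{(2)}\}$ and $\{v^{(1)},v^{(2)}\}$; this is polynomial.

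\textbf{The main obstacle} is the passage from alternating walks to alternating paths in the ``$\Leftarrow$'' of the first equivalence, since alternating walks cannot be shortcut naively: deleting a closed subwalk of \emph{odd} length sitting at a repeated vertex flips the colour seen there and destroys alternation. I would handle this by taking an alternating closed walk $C$ through $u$ and $v$ of minimum length, cutting it at $u$ and $v$ into a $u$--$v$ arc $A$ and a $v$--$u$ arc $B$, and proving by minimality that $A$ and $B$ are paths: a vertex $w\notin\{u,v\}$ repeated on $A$ (or $B$) spans a closed subwalk, which if of even length can be excised outright, and which if of odd length can---using the rigidity of the alternating colour pattern around $w$---be replaced so as to strictly shorten $C$ again; either way minimality is contradicted. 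Then $P_1:=A$ and $P_2:=\overline{B}$ are the two paths demanded by the definition, with distinct first and distinct last colours by the cutting conditions. Once this equivalence is in place, the algorithm (build $D$, compute its strong components, perform the pairwise lookups) and its polynomial running time follow at once.
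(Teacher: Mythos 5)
Your reduction to the colour-state digraph $D$ is sound only for alternating \emph{walks}: reachability from $u^{(3-a)}$ to $v^{(b)}$ characterises alternating $(u,v)$-walks with prescribed first/last colours, not paths. The crux of your argument is the claimed equivalence ``$G$ is colour-connected $\iff$ every pair $u,v$ lies on a common alternating closed walk'', and this equivalence is false. The graph $G$ of Figure \ref{fig:needall} is a counterexample: the closed walk $x_1x_2uvy_1y_2vux_1$ (colours blue, red, blue, red, blue, red, blue, red) is alternating, also at the closing vertex $x_1$, and passes through $x_1$ and $x_2$; yet $G$ has no alternating $(x_1,x_2)$-path (indeed not even a trail) starting with red, so by Lemma \ref{lem:ccsuff} it is not colour-connected. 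Your strong-component test would therefore wrongly certify this pair. The step you yourself flag as ``the main obstacle'' is exactly where the proof breaks and cannot be repaired: in this example the shortest alternating closed walk through $x_1$ and $x_2$ has length $8$ and necessarily repeats $u$ and $v$, and no excision or local replacement of an odd closed subwalk shortens it, so the minimality argument does not force the two cut arcs to be paths.

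The difficulty is intrinsic: alternating-path existence with prescribed end colours is not a reachability question in a digraph of polynomial size (the state must remember which vertices have been used), and the known polynomial algorithm for it goes through matching-type arguments. That is the route taken by the cited source \cite{bangDM188}, which the present paper uses in the form of Lemma \ref{lemPolPath} (find, in time $O(|E|)$, an alternating $(x,y)$-path with prescribed first and last colours); combined with Lemma \ref{lem:ccsuffTRAIL}'s analogue for paths, namely Lemma \ref{lem:ccsuff}, one simply tests, for each ordered pair $(u,v)$ and each start colour $c$, whether such a path exists. If you replace your walk-reachability core by that path-finding subroutine, the surrounding algorithmic shell of your write-up becomes correct; as it stands, the proposal decides a strictly weaker property than colour-connectivity.
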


\begin{lemma}\label{lem:ccsuff}
Let $G$ be a 2-edge-coloured graph. Then $G$ is colour-connected if and only if $G$ has  an alternating  $(u,v)$-path starting with colour $c$ for each colour $c\in\{1,2\}$ and  every ordered pair of vertices $u,v$.
   \end{lemma}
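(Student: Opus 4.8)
The plan is to prove the two directions separately, with the forward direction being essentially immediate and the reverse direction requiring a small amount of work to splice paths together into the two-path certificate of colour-connectivity.

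First, for the forward direction, suppose $G$ is colour-connected. Fix an ordered pair $u,v$ and a colour $c$. By definition there exist alternating $(u,v)$-paths $P_1,P_2$ whose union is an alternating walk. Being an alternating walk, the union closes up alternately at $u$ and at $v$; hence the first edges of $P_1$ and $P_2$ at $u$ must receive different colours (otherwise at $u$ two edges of the same colour meet and the walk is not alternating there), and likewise the last edges at $v$ differ in colour. Therefore one of $P_1,P_2$ is an alternating $(u,v)$-path starting with colour $c$. Since $c$ and the ordered pair were arbitrary, $G$ has the stated property.

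For the reverse direction, assume $G$ has, for every ordered pair $u,v$ and every colour $c$, an alternating $(u,v)$-path starting with colour $c$. Fix distinct $u,v$; I want two alternating $(u,v)$-paths whose union is an alternating walk. Take $P_1$ to be an alternating $(u,v)$-path starting at $u$ with colour $1$, and let $d\in\{1,2\}$ be the colour of its last edge (the one incident to $v$). Now apply the hypothesis to the ordered pair $v,u$ with the colour chosen so that the resulting alternating $(v,u)$-path $Q$ starts at $v$ with a colour different from $d$; reading $Q$ backwards gives an alternating $(u,v)$-path $P_2$ whose last edge at $v$ has colour $\neq d$, and whose first edge at $u$ has some colour $d'$. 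If $d'\neq 1$, then concatenating $P_1$ with the reverse of $P_2$ gives a closed alternating walk through $u$ and $v$ (the colours match up at both $u$ and $v$), so $P_1,P_2$ is the desired pair. If instead $d'=1$, then at $u$ both paths leave with colour $1$; in that case I instead pick $P_1$ to start with colour $2$ at $u$ (again possible by hypothesis), recompute $d$, and rerun the argument — since there are only two colours, one of the two choices of starting colour for $P_1$ makes the endpoints compatible, because the constraint at $v$ fixes the parity of $Q$ while the constraint at $u$ is then automatically satisfiable for exactly one of the two choices. Thus in all cases we obtain $P_1,P_2$ witnessing colour-connectivity for the pair $u,v$, and since $u,v$ were arbitrary, $G$ is colour-connected.

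The only genuinely delicate point is the bookkeeping in the reverse direction: the union of two alternating paths need not be a single alternating closed walk unless the colours are compatible at \emph{both} $u$ and $v$ simultaneously, so one must choose the starting colours of the two paths jointly rather than independently. The casework above (two choices at $u$, each forcing a choice at $v$) handles this; I expect this matching-of-parities argument, rather than any structural fact about $G$, to be the main thing to get right.
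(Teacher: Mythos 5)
Your forward direction is correct and is essentially the paper's (one-sentence) argument. The genuine gap is in the fallback step of your reverse direction. When the first attempt fails ($d'=1$) you assert that restarting with a $(u,v)$-path of starting colour $2$ must succeed, ``because the constraint at $v$ fixes the parity of $Q$''. That parity claim is false: the hypothesis only controls the \emph{starting} colour of the paths it supplies, and in a general 2-edge-coloured graph the ending colour of an alternating path is not determined by its starting colour and its endpoints, since different alternating paths between the same two vertices may have different parities. Hence both of your attempts can fail simultaneously. For a concrete instance (recall the paper allows parallel edges), let $V=\{u,v,a,b\}$ with edges $ua$, $bv$ and one copy of $uv$ of colour $1$, and edges $ub$, $av$ and a second copy of $uv$ of colour $2$; one checks that the hypothesis of the lemma holds for every ordered pair. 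The hypothesis may hand you $P_1=uav$ (ends with colour $2$, so $d=2$) and then $Q$ equal to the colour-$1$ edge $vu$ (it starts with colour $1$ at $v$ and gives $d'=1$: failure at $u$), and in the second round $P_1'=ubv$ (ends with colour $1$) and then $Q'$ equal to the colour-$2$ edge $vu$ (failure at $u$ again). Since the hypothesis is purely existential, a correct proof must work for whatever paths it returns, so this case is genuinely unresolved in your write-up.

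What is missing is exactly the pairing you never consider: comparing the two $(u,v)$-paths with different starting colours with each other. The paper takes $P_1,P_2$ from $u$ to $v$ starting with colours $1$ and $2$; if their last edges have different colours, their union is already the required alternating closed walk (this settles the example above, where $uav$ and $ubv$ end with different colours). If they end with the same colour, say colour $1$, it takes a single $(v,u)$-path $P_3$ starting with colour $2$; then $P_3$ is compatible at $v$ with both $P_1$ and $P_2$, and whichever colour $P_3$ ends with at $u$, that colour differs from the starting colour of one of $P_1,P_2$, so one of the two pairings works. This three-path case analysis requires no control over ending colours and is what closes the gap your parity claim was meant to cover.
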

   
   \begin{proof}
     By the definition of colour-connectivity, if $G$ is colour-connected, then it has the desired paths.
    Assume now that $G$ has  an alternating  $(u,v)$-path starting with colour $c$ for each colour $c\in\{1,2\}$ and every ordered pair of vertices $u,v$. Let $u$ and $v$ be vertices of the graph. %We need to prove that there exists two paths from $u$ to $v$ 
Then, there exists two alternating paths $P_1$ and $P_2$ from $u$ to $v$ starting with colours 1 and 2 respectively. If they both end on different colours, then the union of $P_1$ and $P_2$ form the alternating closed walk required in the definition of colour-connectivity. Otherwise, let us assume by symmetry that $P_1,P_2$  both end with colour 1. We also know that there exists an alternating path $P_3$ from $v$ to $u$ starting with colour 2. If $P_3$ ends with colour 1, then $P_3$ and $P_2$ meet the requirement for colour-connectivity. If $P_3$ ends with colour 2, the requirement is met by $P_3$ and $P_1$.
\end{proof}

Let $G$ be a 2-edge-coloured graph on $n>1$ vertices $\{v_1,v_2,\ldots{},v_n\}$. By an {\bf extension} of  $G$ we mean any 
graph $H=G[I_{p_1},\ldots{},I_{p_n}]$ that is obtained from $G$ by replacing each vertex $v_i$ by an independent set $\{v_{i,1},\ldots{},v_{i,p_i}\}$ of $p_i\geq 1$ vertices, $i\in [n]$ and connecting different such sets as follows: If $v_iv_j$ is an edge in $G$ of colour $c$ then $H$ contains an edge of colour $c$ between $v_{i,q}$ and $v_{j,r}$ for every choice of $q\in [p_i], r\in [p_j]$. 
The following proposition turns out to be useful.

%It is easy to check that the following holds.

\begin{prop}\label{prop:extcc}
For a 2-edge-coloured graph $G$  the following are equivalent.
\begin{itemize}
\item[(i)] $G$ is colour-connected.
\item[(ii)] Every extension $H$ of $G$ is colour-connected.
\end{itemize}
\end{prop}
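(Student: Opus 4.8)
The plan is to dispatch (ii)$\Rightarrow$(i) in one line --- $G$ is an extension of itself (all $p_i=1$), so it inherits colour-connectivity --- and to spend the effort on (i)$\Rightarrow$(ii). Assume $G$ is colour-connected and let $H=G[I_{p_1},\dots,I_{p_n}]$ be an arbitrary extension. By Lemma~\ref{lem:ccsuff} applied to $H$ it suffices to produce, for every ordered pair of vertices of $H$ and every colour $c\in\{1,2\}$, an alternating path between them in $H$ whose first edge has colour $c$; and by Lemma~\ref{lem:ccsuff} applied to $G$ such paths already exist between every ordered pair of vertices of $G$.

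The main tool I would use is \emph{lifting}: if $x_0x_1\cdots x_t$ is a walk of $G$ whose vertices lie in pairwise distinct blobs, then replacing each $x_s$ by any copy $x_s'$ produces a walk $x_0'x_1'\cdots x_t'$ of $H$ with the same colour sequence, and this is a \emph{path} of $H$ whenever $x_0'\ne x_t'$, since distinct blobs are disjoint. This immediately covers a pair $v_{i,q},v_{j,r}$ of $H$ with $i\ne j$: lift an alternating $(v_i,v_j)$-path of $G$ starting with colour $c$, putting the copies $v_{i,q}$ and $v_{j,r}$ at its ends. For the remaining case --- two distinct copies $v_{i,q},v_{i,r}$ of one vertex $v_i$, which are non-adjacent in $H$ --- I would first record that a colour-connected graph on $n>1$ vertices is incident with both colours at every vertex (else some vertex $w$ has all incident edges of one colour, and no alternating path can leave $w$ on the other colour, against Lemma~\ref{lem:ccsuff}). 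Hence $v_i$ has a neighbour $v_j$ with $v_iv_j$ of colour $c$. Taking an alternating $(v_j,v_i)$-path $R=v_jr_1\cdots r_kv_i$ of $G$ that starts with the colour different from $c$, the path property of $R$ forces $v_i,v_j,r_1,\dots,r_k$ to be pairwise distinct, so $v_iv_jr_1\cdots r_kv_i$ is an alternating closed walk whose only repeated vertex is $v_i$ and whose starting edge has colour $c$; lifting it with $v_{i,q}$ for the first $v_i$ and $v_{i,r}$ for the last gives the desired path. Lemma~\ref{lem:ccsuff} then certifies that $H$ is colour-connected.

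I expect the only real difficulty to be this last case. The tempting move --- send $v_{i,q}$ out and bring it back to $v_{i,r}$ --- produces, downstairs in $G$, merely a closed \emph{walk}, which can revisit a vertex sitting in a blob of size $1$ and therefore fail to lift to a path. The resolution is to force that round trip to be one edge out of $v_i$ followed by a returning \emph{path}, so that $v_i$ is the sole repeat and the two copies $v_{i,q},v_{i,r}$ absorb it. Checking that $R$ does not covertly reuse $v_i$ or $v_j$, and that colour-connectivity supplies an edge of the prescribed colour at $v_i$, is then routine.
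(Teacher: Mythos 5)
Your proof is correct and takes essentially the same route as the paper's: the trivial direction via $G$ being an extension of itself, and for (i)$\Rightarrow$(ii) an appeal to Lemma~\ref{lem:ccsuff}, lifting a $(v_i,v_j)$-path when the two vertices lie in distinct blobs, and in the same-blob case sending one copy out along an edge of colour $c$ and returning via an alternating path starting with the other colour so that the two copies absorb the single repeated vertex. Your extra remarks (that colour-connectivity guarantees an edge of each colour at every vertex, and that the lifted walk is genuinely a path) only make explicit points the paper leaves implicit.
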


\AY{
\begin{proof}
Clearly if every extension of $G$ is colour-connected then $G$ is also colour-connected, as $G$ is an extension of itself.
We therefore just need to prove that (i) implies (ii).  Let $G$ be a colour-connected  2-edge-coloured graph with 
 $V(G)=\{v_1,v_2,\ldots{},v_n\}$.
Let $H=G[I_{p_1},\ldots{},I_{p_n}]$ be an extension of $G$ and let $u,v \in V(H)$ be arbitrary and assume that $u \in I_a$ and $v \in I_b$.
If $a \not= b$, then any path from $v_a$ to $v_b$ in $G$ gives rise to a path from $u$ to $v$ in $H$, so, as $G$ is color-connected, 
there exists alternating paths $P_1$ and $P_2$ from $u$ to $v$ in $H$ such $P_1$ starts with colour $1$ and $P_2$ starts with colour $2$.

So now consider the case when $a=b$. Let $v_a v_s$ be an edge in $G$ of color $1$. Let $Q_1$ be an alternating path from $v_s$ to $v_a$ starting with color $2$ (which exists as $G$ is color-connected). The path $Q_1$ corresponds to a path from $I_a$ to $v$ and adding $u$ to the front of this path we obtain a path $P_1$ from $u$ to $v$ starting with colour 1.  Analogously, we can also find a path $P_2$ from $u$ to $v$ starting with colour $2$.

By Lemma~\ref{lem:ccsuff} we have proven that $H$ is colour-connected.
\end{proof}
}

A graph $G$ is {\bf complete multipartite} if its vertices can be covered by a collection of disjoint independent sets $X_1,\ldots{},X_k$, for some $k\geq 2$, such that each pair $X_i,X_j$ with $i\neq j$ form a complete bipartite graph.

\subsection{Alternating cycles in 2-edge-coloured graphs}\label{sec22}

In this section we recall some results on alternating hamiltonian cycles.

An {\bf alternating cycle factor} in a 2-edge-coloured graph $G$ is a collection of disjoint alternating cycles that cover  $V(G)$

We start by  recalling a very useful correspondence between bipartite 2-edge-coloured graphs and directed bipartite graphs. 
This has been used several times in the literature, see e.g. 
\cite{bangJGTconcol,haggkvistC9} and \cite[Chapter 16]{bang2009}.
Let $G=(X,Y,E)$ be a bipartite graph for which each edge is coloured red or blue. Let $D=D(G)=(X,Y,A)$ be the bipartite digraph that we obtain from  $G$ by orienting every red edge $xy$, $x\in X,y\in Y$, as the arc $x\dom y$ and every blue edge $x'y'$,$x'\in X,y'\in Y$,  as the arc 
$y'\dom x'$. Now every alternating path, cycle, trail or walk in $G$ corresponds to a directed path, cycle, trail or walk  in $D$. It is clear that we  can also go the other way by replacing each arc from $X$ to $Y$ by a red edge and each other arc by a blue edge. 
This is called the {\bf BB-correspondence} in \cite[Chapter 16]{bang2009}.

The following is an immediate consequence of the BB-correspondence and well-known fact that the hamiltonian cycle problem is NP-complete for strongly connected bipartite digraphs.

\begin{theorem}\label{thm:npcbip2ec}
It is NP-complete to decide whether a colour-connected 2-edge-coloured bipartite graph has an alternating hamiltonian cycle.
\end{theorem}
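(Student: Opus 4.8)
The plan is to give a polynomial reduction from the Hamiltonian cycle problem for strongly connected bipartite digraphs. Membership in NP is immediate: a spanning cycle, presented as its edge list, is a polynomially checkable certificate, since one only has to verify that the edges form a spanning cycle and that their colours alternate around it (which in particular forces even length). For hardness I would start from a strongly connected bipartite digraph $D=(X,Y,A)$. Deciding whether such a digraph has a directed Hamiltonian cycle is NP-complete: subdividing every arc of an arbitrary strongly connected digraph (replacing $u\dom v$ by $u\dom w_{uv}\dom v$) yields a strongly connected bipartite digraph in which every Hamiltonian cycle must traverse both arcs at each subdivision vertex, so the answer to the Hamiltonian cycle question is unchanged.

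Given $D$, I would apply the inverse of the BB-correspondence: let $G$ be the 2-edge-coloured bipartite graph on vertex set $X\cup Y$ in which each arc $x\dom y$ with $x\in X$ becomes a red edge $xy$ and each arc $y\dom x$ with $x\in X$ becomes a blue edge $xy$. (If $D$ has a digon on a pair $\{x,y\}$ this produces two parallel edges of different colours between $x$ and $y$, which is permitted since we allow multigraphs.) By the BB-correspondence the directed cycles of $D$ are exactly the alternating cycles of $G$; hence $D$ has a directed Hamiltonian cycle if and only if $G$ has an alternating Hamiltonian cycle. The construction is clearly polynomial, so the only thing that remains is to check that $G$ lies in the promised class, i.e. that $G$ is colour-connected.

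For this I would invoke Lemma~\ref{lem:ccsuff}: it suffices to produce, for every ordered pair $u,v$ of vertices and each colour $c$, an alternating $(u,v)$-path in $G$ starting with colour $c$. The key observation is the local behaviour of the BB-correspondence: at any vertex $w$, the out-arcs of $D$ correspond to the edges of $G$ at $w$ of one fixed colour (red if $w\in X$, blue if $w\in Y$) and the in-arcs to the edges of the other colour. Consequently, a directed $(u,v)$-path of $D$ maps to an alternating $(u,v)$-path of $G$ whose first edge has the ``out-colour'' at $u$, while a directed $(v,u)$-path of $D$, read backwards from $u$, maps to an alternating $(u,v)$-path of $G$ whose first edge has the ``in-colour'' at $u$. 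Since $D$ is strongly connected, both a directed $(u,v)$-path and a directed $(v,u)$-path exist, so $G$ has alternating $(u,v)$-paths starting with each of the two colours, and Lemma~\ref{lem:ccsuff} yields colour-connectivity.

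The Hamiltonian-cycle equivalence is, as the lead-in says, an immediate consequence of the BB-correspondence; the only genuinely delicate point is this last verification, where one must keep straight the two possible ``orientations'' of an alternating path and how the colour of its first edge is tied to which side of the bipartition its initial vertex lies in. (Alternatively, one can skip Lemma~\ref{lem:ccsuff} and argue directly that for $u\neq v$ the images of a directed $(u,v)$-path and of a directed $(v,u)$-path are two alternating $(u,v)$-paths whose concatenation is an alternating closed walk, since at $u$, and likewise at $v$, one of them uses an out-arc and the other an in-arc, which carry different colours.) Putting the pieces together, the map $D\mapsto G$ is a polynomial reduction witnessing NP-hardness, which together with membership in NP gives the theorem.
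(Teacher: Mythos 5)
Your overall route is the same as the paper's: the theorem is deduced from the BB-correspondence together with the known NP-completeness of the hamiltonian cycle problem for strongly connected bipartite digraphs, and your verification that the image graph is colour-connected (via Lemma \ref{lem:ccsuff}, using that at each vertex the out-arcs and the in-arcs carry the two different colours, so a directed $(u,v)$-path and a reversed directed $(v,u)$-path give alternating $(u,v)$-paths starting with different colours) is exactly the detail the paper leaves implicit, and that part is correct.

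However, the one ingredient you chose to prove rather than cite is wrong as written. Subdividing every arc of a strongly connected digraph $D$ does \emph{not} leave the answer to the hamiltonian cycle question unchanged. In the subdivided digraph each subdivision vertex $w_{uv}$ has a unique in-neighbour $u$ and a unique out-neighbour $v$, so a hamiltonian cycle, which must visit every $w_{uv}$, is forced to contain both arcs $u\rightarrow w_{uv}$ and $w_{uv}\rightarrow v$ for \emph{every} arc $uv$ of $D$. If some vertex of $D$ has out-degree at least $2$ this is impossible, because on a hamiltonian cycle every vertex has out-degree exactly $1$; consequently the subdivided digraph is hamiltonian if and only if $D$ is itself a single spanning directed cycle, a property testable in polynomial time, so this reduction establishes no hardness. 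The standard repair is vertex splitting rather than arc subdivision: replace each vertex $v$ by $v'$ and $v''$ joined by the arc $v'\rightarrow v''$, and replace each arc $u\rightarrow v$ of $D$ by $u''\rightarrow v'$. The resulting digraph is bipartite with parts $\{v'\}$ and $\{v''\}$, it is strongly connected whenever $D$ is, and since each $v'$ has out-degree $1$ and each $v''$ has in-degree $1$, its hamiltonian cycles correspond exactly to those of $D$. (Alternatively, simply cite the NP-completeness of hamiltonian cycle in strongly connected bipartite digraphs as the well-known fact, which is what the paper does.) With that step repaired, the rest of your argument, namely NP membership, the inverse BB-correspondence, the cycle correspondence and the colour-connectivity check, goes through and matches the intended proof.
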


Using the BB-correspondence we can now characterize  2-edge-coloured complete bipartite graphs with a hamiltonian cycle. This is not new (see e.g. \cite[Theorem 16.7.4]{bang2009}) and we only include it to illustrate the usefulness of the BB-correspondence.
A bipartite tournament is a bipartite digraph with partition classes $X$ and $Y$ such that there is precisely on arc 
between each  vertex of $X$ and each vertex of $Y$. By the BB-correspondence, each 2-edge-coloured complete bipartite graph $G$ corresponds to a bipartite tournament $B(G)$ and it is easy to see that $G$ is colour-connected if and only if $B(G)$ is strongly connected.  
It was shown in \cite{gutinVANB1984,haggkvistC9} that a bipartite tournament has a directed hamiltonian cycle if and only if it is strong and contains a factor, By the BB-correspondence this directly translates into the following characterization of 2-edge-coloured complete bipartite graphs with an alternating hamiltonian cycle.

\begin{theorem}\label{thm:bip2echam}
A 2-edge-coloured complete bipartite graph has an alternating hamiltonian cycle if and only if it is colour-connected and has an alternating cycle factor.
\end{theorem}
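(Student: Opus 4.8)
The plan is to reduce Theorem~\ref{thm:bip2echam} entirely to the corresponding fact about bipartite tournaments via the BB-correspondence, which has already been set up in the excerpt. Let $G=(X,Y,E)$ be a 2-edge-coloured complete bipartite graph and let $B=B(G)$ be the associated bipartite tournament. The key structural dictionary is: alternating paths/cycles/closed walks in $G$ correspond bijectively to directed paths/cycles/closed walks in $B$; hence an alternating hamiltonian cycle in $G$ corresponds to a directed hamiltonian cycle in $B$, an alternating cycle factor in $G$ corresponds to a cycle factor (1-diregular spanning subdigraph) in $B$, and, as noted just before the statement, colour-connectivity of $G$ is equivalent to strong connectivity of $B$.

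First I would make the colour-connectivity equivalence precise: given an alternating $(u,v)$-walk in $G$ its BB-image is a directed $(u,v)$-walk in $B$ and vice versa, so the pair of alternating $(u,v)$-paths whose union is an alternating closed walk (the definition of colour-connectivity) maps to a directed $(u,v)$-path and a directed $(v,u)$-path in $B$, i.e. mutual reachability; thus $G$ colour-connected $\iff$ $B$ strong. Second, I would invoke the theorem of Gutin and H\"aggkvist (\cite{gutinVANB1984,haggkvistC9}): a bipartite tournament has a directed hamiltonian cycle if and only if it is strong and has a cycle factor. Finally I would translate both conditions back through the BB-correspondence: $B$ has a directed hamiltonian cycle $\iff$ $G$ has an alternating hamiltonian cycle; $B$ is strong $\iff$ $G$ is colour-connected; $B$ has a cycle factor $\iff$ $G$ has an alternating cycle factor. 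Chaining these equivalences yields exactly the claimed statement.

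The only genuinely non-routine point is making sure the correspondence between cycle factors is exact: a cycle factor in $B$ is a spanning subdigraph in which every vertex has in- and out-degree one, and one must check that its BB-preimage is precisely a spanning collection of vertex-disjoint alternating cycles covering $V(G)$ (the degree condition forces alternation at every vertex, and conversely an alternating cycle factor has this property). This is straightforward once one observes that at a vertex $x\in X$ a red incident edge becomes an out-arc and a blue one an in-arc (and symmetrically for $y\in Y$), so ``one edge of each colour at every vertex'' matches ``in-degree and out-degree one at every vertex.'' I would spell this out in a sentence or two. Everything else is a direct citation plus bookkeeping, so I do not expect any real obstacle; the main care is simply to phrase the translation in both directions so that ``if and only if'' is fully justified rather than just one implication.
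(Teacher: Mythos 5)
Your proposal is correct and follows exactly the paper's own argument: reduce via the BB-correspondence to the Gutin--H\"aggkvist characterization of hamiltonian bipartite tournaments (strong plus a cycle factor), translating alternating hamiltonian cycles, colour-connectivity versus strong connectivity, and alternating cycle factors versus 1-diregular spanning subdigraphs. The extra care you take in verifying the cycle-factor correspondence is a reasonable elaboration of what the paper leaves implicit, but it is the same proof.
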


Saad \cite{saadCPC5} proved the following characterization of the length of a longest alternating cycle in a colour-connected 2-edge-coloured complete graph.

\begin{theorem}\cite{saadCPC5}
\label{thm:saad}
Let $G$ be a colour-connected  2-edge-coloured complete graph. The length of a longest alternating cycle in $G$ is equal to the maximum number of vertices that can be covered by disjoint alternating cycles in $G$.
\end{theorem}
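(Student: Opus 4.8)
Write $\ell(G)$ for the length of a longest alternating cycle of $G$ and $k(G)$ for the maximum number of vertices that can be covered by a family of pairwise vertex-disjoint alternating cycles of $G$. The inequality $\ell(G)\le k(G)$ is immediate, since a single longest alternating cycle is already such a family. The whole content of the theorem is therefore the reverse inequality, and the plan is to prove it by an extremal argument: among all families $\mathcal{C}=\{C_1,\dots,C_m\}$ of pairwise vertex-disjoint alternating cycles with $\sum_i|V(C_i)|=k(G)$, choose one for which $m$ is as small as possible. If $m=1$ then $C_1$ is an alternating cycle on $k(G)$ vertices and we are done, so it suffices to rule out $m\ge 2$.

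Assuming $m\ge2$, I would reduce to the following merging statement: \emph{if $G$ is a colour-connected $2$-edge-coloured complete graph and $C,C'$ are vertex-disjoint alternating cycles, then $G$ contains an alternating cycle whose vertex set is exactly $V(C)\cup V(C')$.} Applying this to $C_1$ and $C_2$ and replacing them by the resulting cycle produces a family of $m-1$ pairwise disjoint alternating cycles still covering $V(\mathcal{C})$ (the new cycle lives on $V(C_1)\cup V(C_2)$, hence is disjoint from $C_3,\dots,C_m$), contradicting the minimality of $m$.

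To prove the merging statement I would exploit completeness through the usual splicing operation. Deleting a colour-$c$ edge from an alternating cycle leaves an alternating path both of whose terminal edges have colour $\bar c$; hence, if $uu'\in E(C)$ and $vv'\in E(C')$ both have colour $c$ and $\phi(uv)=\phi(u'v')=c$, then deleting $uu',vv'$ and adding $uv,u'v'$ yields an alternating cycle on $V(C)\cup V(C')$. There is extra slack: $v,v'$ need not be consecutive on $C'$, since any Hamilton path of $C'$ joining them with the correct end-colours works, and since $G$ is complete such a path can be built from chords of $C'$ of freely chosen colours; the same applies to $C$, and one may also splice through a short alternating detour rather than a single cross edge. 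The claim I would then establish is that if none of these re-routings closes up to an alternating cycle on $V(C)\cup V(C')$, then the cross-edge pattern between $C$ and $C'$ is so rigid (typically forcing all edges from some vertex to the other cycle to have a common colour) that some vertex $a\in V(C)$ admits no alternating path starting with some colour $c$ to some vertex $b\in V(C')$; by Lemma~\ref{lem:ccsuff} this contradicts colour-connectivity of $G$.

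The main obstacle is precisely this last step, the case analysis ruling out all blocking patterns, and two features make it delicate. First, short cycles: an alternating $2$-cycle is just a pair of parallel edges of colours $1$ and $2$, and such cycles offer the fewest splicing options, so the case in which $C$ or $C'$ is a $2$-cycle is the genuine bottleneck and must be treated separately. Second, colour-connectivity is a global hypothesis whereas splicing is local to $C\cup C'$, so one must argue that a local obstruction to merging already propagates to a failure of colour-connectivity; I expect this is cleanest either by induction on $|V(C)|+|V(C')|$ (removing one vertex from the smaller cycle, merging what remains, and re-inserting the vertex) or by a careful chase of alternating paths out of a vertex all of whose edges to the other cycle share a colour. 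The BB-correspondence applied to the complete bipartite graph of cross edges between $V(C)$ and $V(C')$, which turns it into a bipartite tournament, may be used to organise this analysis.
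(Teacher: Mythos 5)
The paper itself does not prove this theorem (it is quoted from Saad), so the comparison can only be with your argument, and there is a genuine gap in it: the merging lemma you reduce everything to is false. Whether there is an alternating cycle whose vertex set is \emph{exactly} $V(C)\cup V(C')$ depends only on the induced subgraph $G[V(C)\cup V(C')]$, and since any graph with a spanning alternating cycle is colour-connected (the two arcs of that cycle between any two vertices form the required pair of paths), your lemma would force $G[V(C)\cup V(C')]$ to be colour-connected. But colour-connectivity of $G$ does not pass to induced subgraphs: the domination structure of Lemma~\ref{lem:dom} (all edges from the odd vertices of $C$ to $V(C')$ of one colour, all edges from the even vertices of the other colour, with the corresponding chord colours) gives two disjoint alternating $4$-cycles on $8$ vertices that admit no alternating cycle through all of $V(C)\cup V(C')$, and this configuration can be embedded in a larger $2$-edge-coloured complete graph that is colour-connected, since a few additional ``hub'' vertices restore global colour-connectivity without changing the induced subgraph on $V(C)\cup V(C')$. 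This is also why your final step cannot work as described: a vertex of $C$ all of whose edges to $C'$ have colour $c$ does not contradict colour-connectivity of $G$, because the paths guaranteed by Lemma~\ref{lem:ccsuff} may leave $V(C)\cup V(C')$ and return through outside vertices. (A smaller slip: you propose to build a Hamilton path of $C'$ with prescribed end-colours ``from chords of freely chosen colours''; the colouring $\phi$ is fixed, so chord colours are not free.)

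The underlying point is that Saad's theorem asserts equality of \emph{numbers}, not of vertex sets: the longest alternating cycle may cover as many vertices as a maximum family of disjoint alternating cycles while using vertices outside the family and omitting some inside it. Hence the extremal scheme ``take a maximum covering family with fewest cycles and merge two members on their own union'' is not repairable in this form; any correct proof must allow re-routing through the rest of the graph, which is what Saad's argument, and the later deterministic treatment of Bang-Jensen and Gutin via longest alternating trails in extensions of $2$-edge-coloured complete graphs, actually do, and it is substantially more involved than pairwise splicing. This obstruction is also exactly why Theorem~\ref{thm:HCchar} and its relatives require colour-connectivity of the graph in which the cycles are to be merged, rather than deducing merging from colour-connectivity of an ambient graph.
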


Theorem \ref{thm:saad} immediately implies the following result due to Bankfalvi and Bankfalvi who formulated it in a different, but equivalent way.

\begin{theorem}\cite{bankfalvi1968}
    \label{thm:HCchar}
   Let $H$ be a 2-edge-coloured complete graph. Then $H$ has an alternating hamiltonian cycle
if and only if $H$ is colour-connected and has an alternating cycle factor. 
  \end{theorem}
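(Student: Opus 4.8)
The plan is to obtain Theorem~\ref{thm:HCchar} as an immediate corollary of Theorem~\ref{thm:saad}, treating the two implications separately.

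For the ``only if'' direction, suppose $H$ has an alternating hamiltonian cycle $C$. Then $C$ is by itself an alternating cycle factor of $H$, so it only remains to verify colour-connectivity. Given distinct vertices $u,v$, I would cut $C$ at $u$ and $v$ into its two arcs $P_1,P_2$; each $P_i$ is a subpath of $C$ and hence alternating, it is a $(u,v)$-path, and $P_1\cup P_2=C$ is an alternating closed walk, which is exactly what the definition of colour-connectivity requires. Hence $H$ is colour-connected.

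For the ``if'' direction, suppose $H$ is colour-connected and admits an alternating cycle factor $\mathcal{F}$. Since $\mathcal{F}$ is a collection of disjoint alternating cycles covering all $n:=|V(H)|$ vertices, the maximum number of vertices that can be covered by disjoint alternating cycles in $H$ equals $n$. As $H$ is a colour-connected 2-edge-coloured \emph{complete} graph, Theorem~\ref{thm:saad} then guarantees an alternating cycle of length $n$ in $H$, i.e.\ an alternating hamiltonian cycle, completing the proof.

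I do not expect any real obstacle here: the entire substance of the statement is carried by Theorem~\ref{thm:saad}, and the argument above is purely a translation. The only point deserving a moment's care is the verification, in the ``only if'' direction, that the two arcs of $C$ genuinely witness colour-connectivity, which is immediate once one notes that subpaths of an alternating cycle are themselves alternating and that $C$ serves as the required alternating closed walk.
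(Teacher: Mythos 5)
Your proposal is correct and follows essentially the same route as the paper, which presents Theorem~\ref{thm:HCchar} precisely as an immediate consequence of Saad's Theorem~\ref{thm:saad}; you have merely written out the (easy) translation explicitly, including the straightforward verification that an alternating hamiltonian cycle cut at $u$ and $v$ witnesses colour-connectivity.
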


Bang-Jensen and Gutin generalized this to extensions of 2-edge-coloured complete graph $G$.

\begin{theorem}\cite{bangDM188}
    \label{thm:extHCchar}
   Let $H$ be an extended 2-edge-coloured complete graph $G$. Then $H$ has an alternating hamiltonian cycle
if and only if $H$ is colour-connected and has an alternating cycle factor. 
  \end{theorem}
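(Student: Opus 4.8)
The plan is to reduce the statement about extensions of 2-edge-coloured complete graphs to the already-known characterization for 2-edge-coloured complete bipartite graphs (Theorem \ref{thm:bip2echam}) together with Saad's theorem (Theorem \ref{thm:saad}), rather than reproving everything from scratch. One direction is trivial: if $H$ has an alternating hamiltonian cycle then in particular it has an alternating cycle factor (the cycle itself), and it is colour-connected because a hamiltonian alternating cycle yields, for every ordered pair $u,v$, the two arcs of the cycle as alternating $(u,v)$-paths whose union is the whole cycle, an alternating closed walk. So the content is the converse.

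For the converse, let $H = G[I_{p_1},\ldots,I_{p_n}]$ be an extension of a 2-edge-coloured complete graph $G$ that is colour-connected and has an alternating cycle factor $\mathcal F$. First I would invoke Theorem \ref{thm:saad} applied to $G$ itself (which is colour-connected, being an induced ``sub-extension'', or directly since $H$ colour-connected forces $G$ colour-connected), or more precisely I would work with the structure of longest alternating cycles. The key reduction is the following: an alternating cycle in an extension of a complete graph uses at most two vertices from any one class $I_i$ consecutively is false in general, but what is true is that an alternating cycle visiting a class $I_i$ several times can be ``rerouted'' — since all vertices of $I_i$ have identical neighbourhoods and edge-colours, any alternating closed walk that covers $V(H)$ and respects the degree constraint $d(v)\le 2$ can be realized as a union of alternating cycles, and conversely an alternating cycle factor of $H$ together with colour-connectivity lets us splice cycles together. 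Concretely, I would take the alternating cycle factor $\mathcal F = \{C_1,\ldots,C_m\}$ and, using colour-connectivity of $H$ (guaranteed by Proposition \ref{prop:extcc} applied in reverse, or directly), show that any two cycles sharing — or not sharing — structure can be merged: pick a vertex $u$ on $C_i$ and a vertex $v$ on $C_j$; colour-connectivity gives alternating $(u,v)$-paths $P_1,P_2$ starting with colours $1,2$; by choosing the colours at $u$ and $v$ appropriately relative to the cycles (each cycle passes each of its vertices with one edge of each colour), one obtains a standard cycle-merging argument producing a single alternating cycle on $V(C_i)\cup V(C_j)$, possibly after rerouting through repeated classes.

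The main obstacle, and where I would spend the most care, is the merging/rerouting step in the presence of the independent sets $I_i$: when an alternating path $P$ used for merging passes through a class $I_i$ that is also visited by one of the cycles, the naive concatenation may reuse a vertex or violate alternation at the splice point. The right way to handle this is to pass to the BB-type language is not available here since $G$ need not be bipartite, so instead I would follow the approach of \cite{bangDM188}: work with an auxiliary ``colour-splitting'' where each vertex $v_{i,q}$ of $H$ is thought of as carrying an in-stub and an out-stub for each colour, reduce an alternating cycle factor plus colour-connectivity to a connected 2-factor-like object, and then apply the complete-graph case by contracting each $I_i$ back to $v_i$ — noting that a longest alternating cycle in $H$ projects to an alternating closed walk in $G$, and conversely Saad's theorem on $G$ lifts (because $p_i\ge 1$ and the classes are independent, a cycle through $v_i$ with multiplicity $k$ in $G$ lifts to distinct vertices of $I_i$ precisely when $k\le p_i$, which one arranges by choosing the cycle factor to minimize total multiplicity). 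I would then check that colour-connectivity is exactly the obstruction preventing the cycle factor from being merged into one cycle, completing the proof. The delicate bookkeeping is ensuring the rerouted paths stay alternating and vertex-disjoint from the parts of the cycles being kept; this is routine but must be done explicitly for each parity case of the colours at the four endpoints involved in a merge.
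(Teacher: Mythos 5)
The paper itself offers no proof of this statement: it is quoted from \cite{bangDM188} (and it also follows from Theorem \ref{thm:extMclHC}, which the paper does prove, since every 2-edge-coloured complete graph is M-closed). So your argument has to stand on its own, and as written it does not. Your easy direction is fine, but the central gap is the merging step. Given two cycles $C_i,C_j$ of the factor, the alternating $(u,v)$-paths $P_1,P_2$ supplied by colour-connectivity of $H$ live in all of $H$: they may run through vertices of the other cycles of the factor and through interior vertices of $C_i$ and $C_j$, so splicing them in does not yield an alternating cycle on exactly $V(C_i)\cup V(C_j)$, and no ``standard cycle-merging argument'' repairs this. Worse, such a cycle need not exist at all: two cycles of a factor can be in the dominated configuration described in Lemma \ref{lem:dom} (every vertex of one cycle joined to the other by edges of a single colour, these colours alternating around the cycle), which excludes any alternating cycle on their union, while $H$ as a whole remains colour-connected thanks to the remaining cycles. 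A correct proof must therefore either establish the structural dichotomy for a pair of cycles (merge, or complete adjacency with the rigid colour pattern) and then run a global argument over a minimum cycle factor --- this is what the proof of Theorem \ref{thm:extMclHC} does via Lemmas \ref{lem:altcls}--\ref{lem:dom} --- or develop the longest-alternating-cycle machinery for extensions as in \cite{bangDM188}. Your sketch contains neither; the ``colour-splitting/stub'' paragraph is a statement of intent, not an argument.

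The projection-and-lifting part is also unsound as stated. An alternating hamiltonian cycle of $H=G[I_{p_1},\ldots,I_{p_n}]$ corresponds to a closed alternating trail of $G$ visiting each $v_i$ exactly $p_i$ times; Saad's theorem (Theorem \ref{thm:saad}) characterizes only the longest alternating cycle, in which each vertex occurs at most once, and says nothing about trails with prescribed visit multiplicities --- that is the Das--Rao/$T_{f\geq}$ problem, i.e.\ essentially the theorem you are trying to prove, so invoking it here is circular in spirit and ``choosing the cycle factor to minimize total multiplicity'' does not meet the requirement of exact multiplicities $p_i$. Finally, your parenthetical claim that colour-connectivity of $H$ forces colour-connectivity of $G$ is precisely the direction not given by Proposition \ref{prop:extcc}, and for general $G$ it is false (see Figure \ref{fig:needall}); if you want to apply anything to $G$ you must prove this implication for complete $G$ separately.
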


\jbj{
\section{Trail-colour-connectivity}

\jbjb{
We call a 2-edge-coloured graph $G=(V,E)$ {\bf trail-colour-connected} if $G$ contains two alternating $(u,v)$-trails  $T_1,T_2$ whose union is an alternating walk for every pair distinct vertices $u,v$.
The following analogous of Lemma \ref{lem:ccsuff} is easy to derive using almost the same proof as that of Lemma \ref{lem:ccsuff}.

 \begin{lemma}\label{lem:ccsuffTRAIL}
 Let $G$ be a 2-edge-coloured graph. Then $G$ is trail-colour-connected if and only if $G$ has  an alternating  $(u,v)$-trail starting with colour $c$ for each colour $c\in\{1,2\}$ and  every ordered pair of vertices $u,v$.
   \end{lemma}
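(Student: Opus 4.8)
The plan is to imitate the proof of Lemma~\ref{lem:ccsuff} almost verbatim, replacing the word ``path'' by ``trail'' throughout and checking that every step used there only relies on properties that trails also enjoy. The forward direction is immediate from the definition of trail-colour-connectivity: if $G$ is trail-colour-connected then for any ordered pair $u,v$ it contains a pair of alternating $(u,v)$-trails whose union is an alternating closed walk, and any two such trails necessarily start with different colours (since their union alternates as a closed walk), so in particular $G$ has an alternating $(u,v)$-trail starting with colour $c$ for each $c\in\{1,2\}$.

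For the converse, suppose $G$ has an alternating $(u,v)$-trail starting with each colour, for every ordered pair of vertices. Fix distinct $u,v$ and let $T_1,T_2$ be alternating $(u,v)$-trails starting with colours $1$ and $2$ respectively. If $T_1$ and $T_2$ end with different colours, then their union is an alternating closed walk and we are done. Otherwise, by symmetry assume both $T_1$ and $T_2$ end with colour $1$. Take an alternating $(v,u)$-trail $T_3$ starting with colour $2$. If $T_3$ ends with colour $1$, then $T_3$ together with $T_2$ is an alternating closed walk through $u$ and $v$; if $T_3$ ends with colour $2$, then $T_3$ together with $T_1$ works. In each case we have produced two alternating $(u,v)$-trails whose union is an alternating closed walk, which is the requirement in the definition.

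The only point that genuinely needs the reader's attention — and where the analogy is not completely cosmetic — is that the ``union'' of two trails is required merely to be an alternating \emph{walk} (closed walk), not an alternating trail; the definition of trail-colour-connectivity only asks that the concatenation alternates in colour as a walk, and imposes no edge-disjointness between $T_1$ and $T_2$. This is exactly parallel to the path case in Lemma~\ref{lem:ccsuff}, where $P_1\cup P_2$ need not be a path. Consequently no new difficulty arises when gluing trails: concatenating an alternating $(u,v)$-trail ending in colour $c$ with an alternating $(v,u)$-trail starting in colour $3-c$ yields an alternating closed walk, regardless of how the two trails share vertices or edges. I do not anticipate a real obstacle here; the main thing to get right is simply to state clearly that the glued object is a closed walk and to track the colours of the first and last edges of each trail through the case analysis.
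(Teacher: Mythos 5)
Your proposal is correct and is exactly what the paper intends: the paper proves Lemma~\ref{lem:ccsuffTRAIL} by noting it follows from ``almost the same proof as that of Lemma \ref{lem:ccsuff}'', and your argument is precisely that proof with paths replaced by trails, including the correct three-trail case analysis and the observation that the union only needs to be an alternating closed walk.
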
}

\jbjb{It is not difficult to prove the following extension of Proposition \ref{prop:extcc}}.
% The following analogue of Proposition \ref{prop:extcc} is easy to verify.

\begin{prop}
  \label{prop:Tccext}
  For a 2-edge-coloured graph $G$ the following are equvalent.
  \begin{itemize}
  \item[(i)] $G$ is trail-colour-connected.
    \item[(ii)] Every extension  $H$ of $G$ is trail-colour-connected.
    \end{itemize}
  \end{prop}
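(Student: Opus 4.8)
The plan is to transcribe the proof of Proposition~\ref{prop:extcc} almost verbatim, replacing ``path'' by ``trail'' everywhere and Lemma~\ref{lem:ccsuff} by Lemma~\ref{lem:ccsuffTRAIL}. As there, (ii)$\Rightarrow$(i) is trivial since $G$ is an extension of itself, so the content is the implication (i)$\Rightarrow$(ii). Fix an extension $H=G[I_{p_1},\dots,I_{p_n}]$ of a trail-colour-connected graph $G$ and let $u\in I_a$, $v\in I_b$ be distinct vertices of $H$; by Lemma~\ref{lem:ccsuffTRAIL} it suffices to build, for each colour $c\in\{1,2\}$, an alternating $(u,v)$-trail in $H$ starting with colour $c$.

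The one new ingredient I would isolate as a claim is a \emph{lifting} operation. Recall that each edge of $H$ is, by construction, a copy of a unique edge of $G$. Given an alternating trail $W=(v_{i_0},f_1,v_{i_1},\dots,f_m,v_{i_m})$ in $G$ with $i_0\neq i_m$ and a choice of endpoints $x\in I_{i_0}$, $y\in I_{i_m}$, let $W'$ be the walk in $H$ obtained by replacing $v_{i_0}$ by $x$, replacing $v_{i_m}$ by $y$, replacing every other $v_{i_j}$ by its first copy $v_{i_j,1}$, and replacing each edge $f_k$ by a copy of it joining the new images of its two endpoints. Then $W'$ is alternating with the same colour sequence as $W$, and it is in fact a \emph{trail}: the image of the $j$-th vertex lies in $I_{i_j}$ for every $j$, so if two of the chosen $H$-edges coincided then the corresponding edges $f_k,f_{k'}$ of $W$ would join the same pair of vertices of $G$; being distinct edges of the trail $W$ they would be distinct parallel edges, and distinct edges of $G$ receive distinct copies in $H$, a contradiction.

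With this in hand the case $a\neq b$ is immediate: since $G$ is trail-colour-connected, Lemma~\ref{lem:ccsuffTRAIL} gives an alternating $(v_a,v_b)$-trail $T_c$ starting with colour $c$, and lifting $T_c$ with endpoints $u,v$ yields the required trail in $H$. For $a=b$ we have $p_a\geq 2$, and since $G$ is trail-colour-connected with $n\geq 2$ the vertex $v_a$ is incident to an edge $v_av_s$ of colour $1$ and to an edge $v_av_t$ of colour $2$ (otherwise no alternating trail could leave $v_a$ starting with the missing colour). To get an alternating $(u,v)$-trail starting with colour $1$, take an alternating $(v_s,v_a)$-trail $Q$ starting with colour $2$ and \emph{truncate it at its first visit to $v_a$}, so that $Q$ now meets $v_a$ only at its last vertex; lift $Q$ with endpoints $v_{s,1}$ and $v$, and prepend to this lifted trail a copy of the edge $v_sv_a$ joining $u$ and $v_{s,1}$, obtaining a walk $P_1$. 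The truncation is exactly what guarantees that the lifted trail meets $I_a$ only in its last vertex $v\neq u$ and hence contains no edge incident to $u$, so the prepended edge is new and $P_1$ is an alternating $(u,v)$-trail starting with colour $1$; the colour-$2$ case is symmetric, using $v_t$ in place of $v_s$. An appeal to Lemma~\ref{lem:ccsuffTRAIL} then completes the proof.

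I expect the only real difficulty, that is, the single place where the trail setting costs more than the path setting of Proposition~\ref{prop:extcc}, to be this bookkeeping: in a multigraph a trail may traverse two edges between the same pair of vertices, so one must pick the lift, and in the case $a=b$ carry out the truncation, carefully enough that no edge of $H$ gets repeated. Everything else is essentially a line-by-line copy of the proof of Proposition~\ref{prop:extcc}.
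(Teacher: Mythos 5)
Your proof is correct and follows exactly the route the paper intends (the paper omits the argument, remarking only that it is the analogue of Proposition~\ref{prop:extcc}): the direction (ii)$\Rightarrow$(i) via $G$ being an extension of itself, and (i)$\Rightarrow$(ii) via Lemma~\ref{lem:ccsuffTRAIL} with the same two cases $a\neq b$ and $a=b$ as in the proof of Proposition~\ref{prop:extcc}. Your explicit lifting and truncation bookkeeping, which guarantees that no edge of $H$ is repeated, correctly supplies the one detail the trail setting adds beyond the path case.
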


  \jbjb{The graph in Figure \ref{fig:needall} shows that we cannot replace 'every' by 'some' in Proposition \ref{prop:Tccext}(ii).}
  
\begin{figure}[H]
\begin{center}
\begin{minipage}{4cm}
\tikzstyle{vertexB}=[circle,draw, minimum size=12pt, scale=0.7, inner sep=0.5pt]
\tikzstyle{vertexBsmall}=[circle,draw, minimum size=10pt,  top color=gray!50, bottom color=gray!10, scale=0.8, inner sep=0.5pt]
\begin{tikzpicture}[scale=0.6]
%  \draw (1,1) rectangle (4.7,3);
%  \node [scale=0.99] at (0,2) {$\bar{S}$};
% \draw (5.3,1) rectangle (11,3);
% \node [scale=0.99] at (12,2) {$S$};

 \node (x1) at (1,1) [vertexB] {$x_1$};
 \node (x2) at (1,5) [vertexB] {$x_2$};
 \node (u) at (3,3) [vertexB] {$u$};
 \node (v) at (5,3) [vertexB] {$v$};
 \node (y1) at (7,1) [vertexB] {$y_1$};
 \node (y2) at (7,5) [vertexB] {$y_2$};

  \draw [color=blue] (x1) to (x2);
  \draw [color=blue] (y1) to (y2);
  \draw [color=blue] (u) to (v);
  \draw [color=red] (x1) to (u);
  \draw [color=red] (x2) to (u);
  \draw [color=red] (y2) to (v);
  \draw [color=red] (y1) to (v);
  \node [scale=0.99] at (4,0.5) {$G$};
\end{tikzpicture}
\end{minipage} \hspace{1cm}
\begin{minipage}{4cm}
\tikzstyle{vertexB}=[circle,draw, minimum size=12pt, scale=0.7, inner sep=0.5pt]
\tikzstyle{vertexBsmall}=[circle,draw, minimum size=10pt,  top color=gray!50, bottom color=gray!10, scale=0.8, inner sep=0.5pt]
\begin{tikzpicture}[scale=0.6]
% \draw (2.5,1.5) rectangle (3.5,4.5);
% \draw (4.5,1.5) rectangle (5.5,4.5);

%  \node [scale=0.99] at (0,2) {$\bar{S}$};
% \draw (5.3,1) rectangle (11,3);
% \node [scale=0.99] at (12,2) {$S$};

 \node (x1) at (1,1) [vertexB] {$x_1$};
 \node (x2) at (1,5) [vertexB] {$x_2$};
 \node (u1) at (3,2.4) [vertexB] {$u_1$};
 \node (u2) at (3,3.6) [vertexB] {$u_2$};
 \node (v1) at (5,2.4) [vertexB] {$v_1$};
 \node (v2) at (5,3.6) [vertexB] {$v_2$};
 \node (y1) at (7,1) [vertexB] {$y_1$};
 \node (y2) at (7,5) [vertexB] {$y_2$};

  \draw [color=blue] (x1) to (x2);
  \draw [color=blue] (y1) to (y2);

  \draw [color=blue] (u1) to (v1);
  \draw [color=blue] (u1) to (v2);
  \draw [color=blue] (u2) to (v1);
  \draw [color=blue] (u2) to (v2);

  \draw [color=red] (x1) to (u1);
  \draw [color=red] (x1) to (u2);
  \draw [color=red] (x2) to (u1);
  \draw [color=red] (x2) to (u2);
  \draw [color=red] (y2) to (v1);
  \draw [color=red] (y2) to (v2);
  \draw [color=red] (y1) to (v1);
  \draw [color=red] (y1) to (v2);

  \node [scale=0.99] at (4,0.5) {$H$};
\end{tikzpicture}
\end{minipage} 
\end{center}

\caption{$G$ is not trail-colour-connected as there is no trail from $x_1$ to $x_2$ starting with the color red.
$H$ is an extension of $G$ and $H$ is trail-colour-connected (and colour-connected) as it contains an alternating hamilton cycle 
$y_1 v_1 u_1 x_1 x_2 u_2 v_2 y_2 y_1$.}\label{fig:needall}
\end{figure}
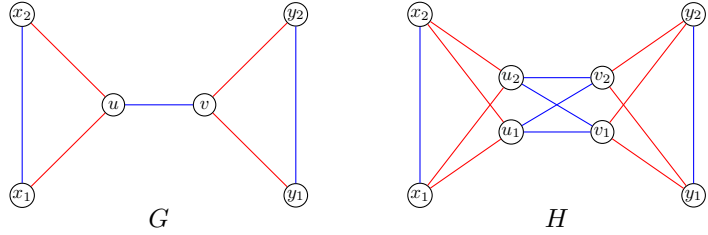
\vspace{0.5cm}

\begin{lemma}
\label{lem:CMGtcciffcc}
A 2-edge-coloured complete multipartite graph is colour-connected if and only if it is trail-colour-connected.
\end{lemma}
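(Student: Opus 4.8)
The forward implication is trivial: a path is a trail, so the two alternating $(u,v)$-paths furnished by colour-connectivity also witness trail-colour-connectivity. Conversely, suppose the complete multipartite graph $G$ is trail-colour-connected; I want to show it is colour-connected. By Lemmas~\ref{lem:ccsuff} and~\ref{lem:ccsuffTRAIL} it suffices to prove the following local statement: \emph{in a $2$-edge-coloured complete multipartite graph, if there is an alternating $(u,v)$-walk whose first edge has colour $c$, then there is an alternating $(u,v)$-path whose first edge has colour $c$.} (Trail-colour-connectivity supplies, for every ordered pair and every colour, an alternating trail, which is in particular such a walk.)

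To prove the local statement I would take a shortest alternating $(u,v)$-walk $W=w_0w_1\cdots w_k$ with $\phi(w_0w_1)=c$ and show that it is a path. First dispose of the routine reductions. If $v$ appears at some position before $k$, truncating $W$ there gives a shorter alternating $(u,v)$-walk still starting with $c$; so $v$ occurs only as $w_k$. If some vertex occurs at two positions $i<j$ of the same parity, then positions $i$ and $j+1$ have opposite parity, so the edge of $W$ entering $w_i$ at its first visit and the edge $w_iw_{j+1}$ have different colours; hence deleting the segment $w_{i+1},\dots,w_j$ produces a shorter alternating $(u,v)$-walk, still starting with $c$ (this also covers $i=0$). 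Thus we may assume every repeated vertex of $W$ occurs at positions of opposite parity. Choosing a repetition $w_i=w_j$ with $j-i$ minimum, the vertices $w_i,w_{i+1},\dots,w_{j-1}$ are distinct, so $C=w_iw_{i+1}\cdots w_{j-1}w_i$ is a cycle of odd length $\ell=j-i\ge 3$ attached to $W$ only at $w_i$; consecutive edges of $C$ alternate in colour except that the two edges of $C$ at $w_i$ share a colour $d$, while the edge $w_jw_{j+1}$ of $W$ (which exists since $v$ occurs only at $w_k$) has colour $\bar d$, and so does $w_{i-1}w_i$ when $i\ge 1$.

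Now I would use that $G$ is complete multipartite. Since consecutive vertices of $C$ lie in distinct parts, the parts of $G$ properly colour the odd cycle $C$, so $V(C)$ meets at least three parts. The plan is to use the extra edges this forces to replace the traversal of $C$ inside $W$ by a strictly shorter closed walk at $w_i$ whose first and last edges still have colour $d$, contradicting minimality. Two such moves are easy to check. If $w_i$ has a neighbour $w_t$ with $i+2\le t\le j-2$ and $\phi(w_iw_t)=d$, then depending on the parity of $t-i$ exactly one of the cycles $w_iw_{i+1}\cdots w_tw_i$ and $w_iw_tw_{t+1}\cdots w_{j-1}w_i$ is alternating except for a single colour-$d$ defect at $w_i$ and is shorter than $C$, and substituting it for $C$ in $W$ yields the shorter walk. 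Similarly, when $\ell\ge 5$ and $w_{i+1},w_{j-1}$ lie in different parts with the resulting chord $w_{i+1}w_{j-1}$ coloured $\bar d$, the triangle $w_iw_{i+1}w_{j-1}w_i$ does the same job. The configurations not covered by these — essentially, when all of $w_i$'s edges into the interior of $C$ have colour $\bar d$ (or are absent), together with the base case $\ell=3$ — I would handle by rerouting through the walk-neighbours $w_{i-1}$ and $w_{j+1}$ of $w_i$, again invoking completeness to produce a suitably short alternating detour between them compatible with the colour constraints at those two vertices.

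The main obstacle is precisely this last point: one must verify that \emph{for every possible combination of part-memberships and edge-colours around $C$, at least one shortening move applies}. The bookkeeping is delicate, since every candidate detour has to respect both the colour alternation and a parity condition forced by the odd length of $C$, and the small cases $i\in\{0,1\}$ and $\ell=3$ need separate (but easy) attention. Organising this analysis cleanly — for instance by first showing that $w_i$'s colour-$d$ edges into $V(C)$ can only reach $w_{i+1}$ and $w_{j-1}$, and then deriving a contradiction from the three-part spread of $C$ — is where the genuine content of the lemma lies.
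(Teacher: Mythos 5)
Your reduction is set up correctly: passing to a shortest alternating $(u,v)$-walk starting with colour $c$, truncating at early occurrences of $v$, deleting same-parity repetitions, and isolating an odd cycle $C=w_iw_{i+1}\cdots w_{j-1}w_i$ with a single colour-$d$ defect at $w_i$ are all fine, and your two shortening moves do check out. But the argument stops exactly where the content of the lemma lies, and you say so yourself: you never verify that some shortening move applies in every configuration. Moreover, your stated plan of ``replacing the traversal of $C$ inside $W$ by a strictly shorter closed walk at $w_i$'' is structurally impossible in the base case $\ell=3$: any closed walk whose first and last edges have colour $d$ and which alternates in between has an odd number of edges, hence at least $3$, so no strictly shorter local substitute exists, and the triangle case can only be handled by modifying the walk across $w_{i-1}$ and $w_{j+1}$ (and $w_{i-1}$ need not exist when $i=0$) or by abandoning shortening altogether. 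The unresolved residue --- $\ell=3$ together with the configurations where all chords from $w_i$ into the interior of $C$ are coloured $\bar d$ or absent --- is precisely the hard part, so as it stands the proposal has a genuine gap rather than a complete proof.

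The paper closes this case without any such bookkeeping by using the endpoint $v$ instead of local surgery around the odd cycle. Working with the first repeated vertex $w$ of the trail and its successor $x$, the edge $wx$ and the reverse of the closed segment give two $(w,x)$-routes that start with the same colour and end with different colours; if $xv$ is an edge, one of the two finishes an alternating $(u,v)$-path at once, and if not, then $x$ and $v$ lie in the same part, so both $w$ and the trail-neighbour of $x$ other than $w$ are adjacent to $v$, and a short check on the colours of those two edges to $v$ yields the desired path directly. In other words, the complete multipartite hypothesis is exploited at the target $v$ (non-adjacency of $x$ and $v$ forces the neighbouring vertices of $x$ to see $v$), not around the odd cycle; importing $v$ (or at least the walk-neighbours $w_{i-1},w_{j+1}$) into the argument in this systematic way is the missing idea you would need to finish your case analysis.
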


\begin{proof}
Let $G=(V,E)$ be a complete multipartite graph, let $\phi: E\rightarrow \{1,2\}$ be a 2-colouring of its edges and let $u,v$ be arbitrary distinct vertices. Assume first that $G$ is colour-connected. Then $G$ has a $(u,v)$-path $P_i$ starting on colour $i$ for $i\in [2]$. Hence we can take $P_1, P_2$ as the desired trails.
%\AY{apply Lemma~\ref{lem:ccsuffTRAIL} where we take $P_1,P_2$ as the desired trails.}\\

Suppose now that $G$ has alternating $(u,v)$-trails $T_1,T_2$ such that $T_i$ starts with an edge of colour $i$ for $i\in [2]$. We want to show that $G$ also has alternating $(u,v)$-paths $P_1,P_2$ such that  $P_i$ starts with an edge of colour $i$ for $i\in [2]$. Consider  the trail $T_c$. If it is a path, then we can take $P_c=T_c$ so assume that $T_c$ is not a path.
Below we show that, starting from the trail $T'=T_c$, if the current $(u,v)$-trail $T'$ starting with colour $c$ is not a path, then we can obtain a shorter $(u,v)$-trail that also starts on colour $c$ and hence by setting $T'$ to be this trail and iterating the procedure, we will  obtain the desired path $P_c$.\\

 If no vertex appears twice in $T'$, then, we can take $P_c=T'$.  Otherwise, let $w$ be the first vertex that is met at least twice as we traverse $T'$ and consider two possibilities.
\begin{itemize}
\item If there is an even number of edges between the two first occurrences of $w$, then we can remove the subtrail between these two occurrences and still have a properly-coloured $(u,v)$-trail $T^*$ which is shorter than $T'$. 
\item If there is an odd number of edges between the two first occurrences of $w$, then let $x$ be the first vertex that appears on $T'$ just after the first occurrence of $w$. Hence, we have two $(w,x)$-paths (one is just the edge $wx$) that start with the same colour and end with different colours.  Both of them can be used after the path $T'[u,w]$ that $T'$ defines between $u$ and $w$ and one of them is compatible with the edge between $x$ and $v$ in $G$ if such an edge exists so if $x$ and $v$ are adjacent we have found the desired path $P_c$. Hence we may asumme that  $x$ and $v$ are not adjacent, then $w$ and $v$ are adjacent (as $G$ is complete multipartite). If $\phi{}(wx)=\phi{}(wv)$  we can take $P=T'[u,w]wv$, so assume that $\phi{}(wx)\neq \phi{}(wv)$. In that case, either $T'[u,w]wv$ is not alternating as the last two edges have the same colour, or $u=w$ and $\phi{}(uv)\neq c$. Consider instead the predecessor $x^-$ of $x$ on $T'$. As $x$ and $v$ are non-adjacent, there is an edge between $x^-$ and $v$. If $\phi{}(x^-v)=\phi{}(wx)$, then $T'[u,w]wxx^-v$ is an alternating $(u,v)$-path and otherwise $T'[u,x^-]x^-v$ is an alternating $(u,v)$-path.
\end{itemize}

As the two cases above cover all possibilities, we see that, by iterating the process above, we will eventually end up with an alternating  $(u,v)$-path which starts with colour $c$. Since $c$ was arbitrary, we have shown that the $(u,v)$-paths $P_1,P_2$ exist for every choice of distinct vertices $u,v$. Hence, by Lemma \ref{lem:ccsuff}, $G$ is colour-connected.

The procedure is illustrated in Figure \ref{figcolcon}. For readability, only the edges of the spanning eulerian subgraph are represented in the figure, but keep in mind that the graph is complete multipartite graph. Let us look for a path from $v_1$ to $v_9$ starting with a red edge. The eulerian subgraph defines a walk $v_1v_2v_3v_4v_5v_6v_3v_7v_2v_8v_9$ starting with a red edge but one cannot extract a path from it. The first vertex that appears twice in the walk in $v_3$. Since the walk uses an even number of edges between the two first occurrences of $v_3$, we can contract those edges and find the properly-coloured walk $v_1v_2v_3v_7v_2v_8v_9$. The first vertex that appears twice is $w=v_2$ and the walk uses an odd number of edges between the first two occurrences. Hence, the walk defines two paths $v_2v_3$ and $v_2v_7v_3$ between $v_2$ and $v_3$. If there is a red edge between $v_3$ and $v_9$, our path from $v_1$ to $v_9$ starting with a red edge is $v_1v_2v_3v_9$. If there is a blue edge $v_3$ between and $v_9$, the path is $v_1v_2v_7v_3v_9$. Finally, if there is no edge between $v_3$ and $v_9$, then there must be a blue edge between $v_2$ and $v_9$ since there is one between $v_2$ and $v_3$. Hence, the path would be $v_1v_2v_9$.\\
%The first vertex that appears twice is $w$ but the cycle uses an even number of edges between the first two occurrences and contract it. 

\begin{figure}[H]

\centering{\includegraphics{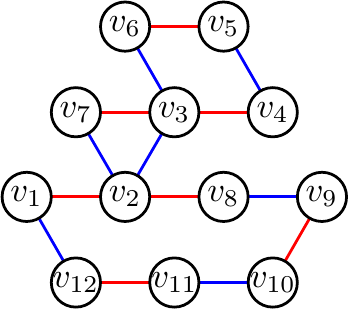}}
\caption{A spanning eulerian subgraph in a 2-edge-coloured graph.}
\label{figcolcon}
\end{figure}

\end{proof}

}

The following lemma can be found \cite{bangDM188} (as Proposition 5.1) and also in
\cite{bang2009} (as Proposition 16.6.2).

\begin{lemma} \label{lemPolPath} \cite{bangDM188,bang2009}
Let $G=(V,E)$ be a connected 2-edge-coloured multigraph and let $x$ and $y$ be distinct vertices of $G$.
For each choice of $i,j \in \{1,2\}$ we can find an alternating path $P=x_1 x_2 \ldots x_k$ with
$x_1=x$, $x_k=y$, $\phi{}(x_1x_2)=i$ and $\phi{}(x_{k-1}x_k)=j$ in time $O(|E|)$ (if one exists).
\end{lemma}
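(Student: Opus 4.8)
The statement to prove is that in a connected 2-edge-coloured multigraph $G=(V,E)$, for any two distinct vertices $x,y$ and any prescribed start colour $i$ and end colour $j$, we can find (or report the non-existence of) an alternating $(x,y)$-path beginning with colour $i$ and ending with colour $j$ in time $O(|E|)$. I want to emphasize up front that this is a \emph{path} problem, not a trail or walk problem, so the obvious reduction to reachability in an auxiliary directed graph needs a little care.

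The plan is to build an auxiliary digraph $D$ on the vertex set $V\times\{1,2\}$, where a copy $(u,c)$ represents ``being at $u$ having just arrived along an edge of colour $c$''. For every edge $uv$ of colour $c$ in $G$ we put arcs $(u,3-c)\dom(v,c)$ and $(v,3-c)\dom(u,c)$; intuitively, to traverse a colour-$c$ edge we must have arrived on the other colour. Then an alternating walk in $G$ starting with colour $i$ and ending with colour $j$ corresponds exactly to a directed walk in $D$ from a suitable source to $(y,j)$, and a breadth-first search from the source set computes reachability in $O(|V|+|E|)=O(|E|)$ time (using connectivity). The subtlety is the choice of the source: for the first edge, of colour $i$, there is no ``previous edge'', so I would add arcs from a new source vertex $s$ to every copy $(v,i)$ such that $xv$ is an edge of colour $i$, i.e. reachability of $(y,j)$ from $s$ in $D$ is equivalent to the existence of an alternating \emph{walk} with the prescribed end colours.

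The remaining — and genuinely delicate — point is upgrading ``walk'' to ``path''. The key structural fact, which is exactly why the BB-type / alternating-walk machinery works, is that whenever an alternating walk with prescribed first and last colours exists between $x$ and $y$, one of \emph{minimum length} is automatically a path: if a minimum-length such walk revisited a vertex $w$, the cycle between the two visits would either have even length in edges (and could be deleted, contradicting minimality) or odd length (and then, crucially, because the two edges incident with the two occurrences of $w$ have the same colour configuration one can still splice out a positive-length closed subwalk while preserving the alternation and the prescribed end colours — this is the same parity argument used in the proof of Lemma \ref{lem:CMGtcciffcc}, but here no completeness is needed because we are allowed to shorten rather than reroute). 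So the main obstacle is to make this shortening argument airtight in the presence of parallel edges and to confirm that a shortest directed walk in $D$, found by BFS, really does correspond to such a minimum-length alternating walk in $G$ and hence to a path. I would therefore: (1) set up $D$ and the source $s$ as above; (2) run BFS from $s$ and, if $(y,j)$ is reached, extract a shortest $s$--$(y,j)$ dipath; (3) translate it back to an alternating $(x,y)$-walk $W$ with the right end colours; (4) invoke the parity/minimality argument to conclude $W$ is a path, or equivalently argue directly that a shortest such walk cannot repeat a vertex; and (5) observe that every step runs in $O(|E|)$, since $D$ has $O(|V|)$ vertices and $O(|E|)$ arcs. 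The cleanest write-up probably folds steps (2)--(4) into a single claim: ``a shortest alternating $(x,y)$-walk with first colour $i$ and last colour $j$ is a path,'' proved by the exchange/deletion argument, and then simply notes that BFS in $D$ finds one.
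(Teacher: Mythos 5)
There is a genuine gap, and it is exactly at the step you flag as ``the remaining --- and genuinely delicate --- point''. Reachability in your colour-state digraph $D$ decides the existence of an alternating $(x,y)$-\emph{walk} with prescribed first and last colours, and your key claim that a shortest such walk is automatically a path is false. Concretely, take $V=\{x,u,a,b,y\}$ with $\phi(xu)=\phi(uy)=\phi(ab)=1$ and $\phi(ua)=\phi(bu)=2$. Then $xuabuy$ is an alternating $(x,y)$-walk starting and ending with colour $1$ (and it is the shortest one), yet the only $(x,y)$-path in this connected graph is $xuy$, which is not alternating; so a BFS from your source $s$ reaches $(y,1)$ although no alternating $(x,y)$-path with these end colours exists. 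Your splice argument collapses precisely in the odd case: the closed subwalk $uabu$ has odd length, so the two arrivals at $u$ are on different colours and deleting the subwalk leaves two consecutive colour-$1$ edges. In Lemma \ref{lem:CMGtcciffcc} this situation is rescued by rerouting through edges guaranteed by completeness; your remark that ``here no completeness is needed because we are allowed to shorten rather than reroute'' is exactly the false step --- shortening does not preserve alternation, and there is nothing to reroute through.

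Note also that the paper itself does not prove Lemma \ref{lemPolPath}; it is quoted from \cite{bangDM188,bang2009}, where the argument is of a genuinely different kind, designed to cope with precisely the odd-closed-walk obstruction above. The standard route is via matching: split every vertex $v\notin\{x,y\}$ into two colour copies $v_1,v_2$ joined by a private edge, replace each colour-$c$ edge $uv$ by $u_cv_c$, and keep only the copies $x_i$ and $y_j$ of the endpoints; an alternating $(x,y)$-path with first colour $i$ and last colour $j$ exists if and only if this auxiliary graph has a perfect matching, equivalently if and only if there is an augmenting path with respect to the matching formed by the private edges, and a single augmenting-path (blossom) search yields the $O(|E|)$ bound. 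The blossom machinery is what correctly distinguishes paths from walks, so your write-up needs an ingredient of this kind rather than BFS on $D$ plus a shortening argument.
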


\begin{theorem} \label{thmTRAIL}
Let $G$ be a $2$-edge coloured graph and let $x,y \in V(G)$ be arbitrary.
We can decide if there is a trail from $x$ to $y$ starting with colour $c_1$ and ending with colour $c_2$ in polynomial time.
\end{theorem}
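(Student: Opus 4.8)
The plan is to reduce to a perfect matching computation, using the following alternating analogue of Euler's theorem: a set $F\subseteq E(G)$ can be written as an edge‑disjoint union of alternating closed trails if and only if $\deg^F_1(v)=\deg^F_2(v)$ for every vertex $v$, where $\deg^F_c(v)$ denotes the number of colour‑$c$ edges of $F$ incident with $v$. Indeed, given such an $F$, choosing at each vertex $v$ a perfect matching between the colour‑$1$ edges of $F$ at $v$ and the colour‑$2$ edges of $F$ at $v$ --- a \emph{transition system} --- and then following transitions decomposes $F$ into alternating closed trails; conversely, any such union trivially satisfies the degree condition.

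First I would remove the constraints on the first and last colours by attaching a gadget to $G$ (we may assume $x\neq y$; the case $x=y$ is handled similarly). If $c_1=c_2$, add a new edge $e_0=xy$ of colour $\neq c_1$; if $c_1\neq c_2$, add a new vertex $z$ together with an edge $yz$ of colour $c_1$ and an edge $zx$ of colour $c_2$. A direct check then gives: $G$ has an alternating $(x,y)$‑trail starting with colour $c_1$ and ending with colour $c_2$ if and only if the enlarged graph $G'$ has an alternating closed trail using $e_0$, respectively passing through $z$ (the latter automatically uses both new edges, since $z$ has degree $2$). For the ``if'' direction one deletes the auxiliary edge(s) from the closed trail, the prescribed colours at $x$ and $y$ being forced by the alternation along the closed trail; for the ``only if'' direction one prepends the gadget to the given $(x,y)$‑trail, closing it into an alternating closed trail of the required type.

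Then I would build an auxiliary graph $H$ on the \emph{ports} of $G'$, a port being a pair $(e,v)$ with $v$ an endpoint of the edge $e$. For each vertex $v$, put a complete bipartite graph in $H$ between the colour‑$1$ ports at $v$ and the colour‑$2$ ports at $v$ (the \emph{transition edges}), and for each edge $e=uv$ of $G'$ add the \emph{consistency edge} $(e,u)(e,v)$. A perfect matching $M$ of $H$ corresponds to a pair $(F,\tau)$ consisting of a set $F=\{e:\ (e,u)(e,v)\notin M\}\subseteq E(G')$ together with a transition system $\tau$ on $F$: if $e=uv\in F$ then both ports of $e$ are matched by transition edges, and at each vertex the transition edges of $M$ pair the colour‑$1$ ports of $F$‑edges with the colour‑$2$ ports of $F$‑edges, so $\deg^F_1(v)=\deg^F_2(v)$ for all $v$ and, by the first paragraph, $\tau$ decomposes $F$ into alternating closed trails. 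Deleting from $H$ the consistency edge of $e_0$ (resp. of both new edges) forces $e_0$ (resp. the two new edges) into $F$; the alternating closed trail of the decomposition that contains the forced edge(s) is then exactly the closed trail needed in the previous paragraph, and conversely such a closed trail, being itself an admissible $F$, yields a perfect matching of $H$. Hence $G$ has the desired trail if and only if this modified $H$ has a perfect matching, which is decidable in polynomial time; $H$ has $O(|E(G)|)$ vertices and $O(|E(G)|^2)$ edges, and the trail can be read off the matching.

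The step I expect to demand the most care is the equivalence between perfect matchings of $H$ and edge sets equipped with a transition system, together with its interaction with the gadget: one must check that the forced edge(s) lie on a single alternating closed trail of the decomposition, and that deleting the gadget from that trail yields a genuine $(x,y)$‑trail with the prescribed first and last colours, as well as dispatching the corner cases $x=y$ and trails of length one. The matching computation and the gadget surgery are each routine; it is their interface that needs to be written out in full.
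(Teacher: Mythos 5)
Your argument is correct, but it follows a genuinely different route from the paper. The paper splits each vertex of $G$ into two copies and replaces each edge $uv$ by a small two-vertex gadget, producing a 2-edge-coloured graph $H$ in which alternating $(x_1,y_1)$-\emph{paths} with prescribed end colours correspond to alternating $(x,y)$-\emph{trails} in $G$ with the same end colours; the crucial observation there is that a shortest feasible trail repeats no vertex more than twice (two entries on the same colour could be short-cut), which is what makes two copies per vertex suffice, and the result then follows from the known linear-time algorithm of Lemma~\ref{lemPolPath}. You instead bypass paths entirely: you characterize edge sets decomposable into alternating closed trails by the local balance $\deg^F_1(v)=\deg^F_2(v)$ realized via a transition system, encode the choice of such an $F$ together with its transitions as a perfect matching in a port graph, and impose the end-colour constraints by closing the sought trail with a gadget ($e_0$ when $c_1=c_2$, the degree-two vertex $z$ otherwise) whose consistency edges you delete to force it into $F$; the forced edge(s) then single out the closed trail you need. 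This is close in spirit to the paper's own matching construction for eulerian factors (Lemma~\ref{lem:findEF}) rather than to its proof of Theorem~\ref{thmTRAIL}. The trade-off: the paper's reduction is lighter and yields an essentially linear-time algorithm via Lemma~\ref{lemPolPath}, while your reduction is self-contained apart from a maximum-matching subroutine, costs more (an auxiliary graph with $O(|E|)$ vertices and up to $O(|E|^2)$ transition edges), avoids the ``at most two visits'' observation altogether, and is more flexible (one can force arbitrary edges into the trail the same way). The corner cases you flag (wrap-around alternation of the closed trails produced by the transition system, the two forced edges at $z$ lying on one trail, parallel edges when $c_1=c_2$, and $x=y$) all check out routinely in the multigraph setting the paper works in.
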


\begin{proof}
Let $G$ be a $2$-edge coloured graph and let $x,y \in V(G)$ be arbitrary.
Build a new $2$-edge coloured graph $H$ as follows (see Figure~\ref{TrailPic} for an example). Let $V(H)$ be defined as follows.

\[
V(H)  =  \{ u_1, u_2 \; | \; u \in V(G) \}  \cup \{ u_{uv}, v_{uv} \; | \; uv \in E(G) \}\\
\]

For every edge $uv \in E(G)$, add the following edges
$u_1 u_{uv}, u_2 u_{uv}, v_1 v_{uv}, v_2 v_{uv}$
to $H$ with colour $\phi{}(uv)$ and add the edge $u_{uv} v_{uv}$ to $H$ with colour $3-\phi{}(uv)$ (see the picture below for an example of $H$).

We will now show that there is a $(x,y)$-alternating-trail in $G$ starting with colour $c_1$ and
ending with colour $c_2$  if and only if there is a 
$(x_1,y_1)$-alternating-path in $H$ starting with colour $c_1$ and 
ending with colour $c_2$. This would complete the proof by Lemma~\ref{lemPolPath}

Let $P$ be an $(x_1,y_1)$-alternating-path in $H$ starting with colour $c_1$ and
ending with colour $c_2$.  
By substituting every subpath $u_j u_{uv} v_{uv} v_i$ of $P$ in $H$ by the edge $uv$ in $G$ we 
obtain a walk, $W$, in $G$. This walk is alternating and no edge is used more than once (as edges
of the type $u_{uv} v_{uv}$ are only used once in $P$).  Therefore we note that $W$ is a $(x,y)$-alternating-trail
in $G$  starting with colour $c_1$ and
ending with colour $c_2$.

Conversely let $T=t_1t_2\ldots t_k$ be a $(x,y)$-alternating-trail in $G$  starting with colour $c_1$ and ending with colour $c_2$. 
Assume furthermore that $T$ contains as few edges as possible. 
If there are two edge $t_{i-1}t_{i}$ and $t_{j-1}t_j$, where $i<j$, of the same
colour in $W$ such that $t_i=t_j$ then we obtain a contradiction to the minimality of $T$ as we could have considered the trail
$t_1 t_2 \ldots t_{i-1} t_j t_{j+1} \ldots t_k$.
This implies that no vertex appears more than twice in $T$.
First substitute the $i$'th appearance of $u$ in $T$ by $u_i$ ($i \in \{1,2\}$) and then replace every 
edge $u_i v_j$ on $T$ by the path $u_i u_{uv} v_{uv} v_j$ ($i,j \in \{1,2\}$). This gives us a $(x_1,y_j)$-alternating-path
in $H$ (for some $j \in \{1,2\}$)  starting with colour $c_1$ and
ending with colour $c_2$. If $j=2$ then swap $y_1$ and $y_2$ in the path. 

This implies that
 there is a $(x,y)$-alternating-trail in $G$ starting with colour $c_1$ and
ending with colour $c_2$  if and only if there is a
$(x_1,y_1)$-alternating-path in $H$ starting with colour $c_1$ and
ending with colour $c_2$, as desired.
\end{proof}

%xxxxxxxxxxx

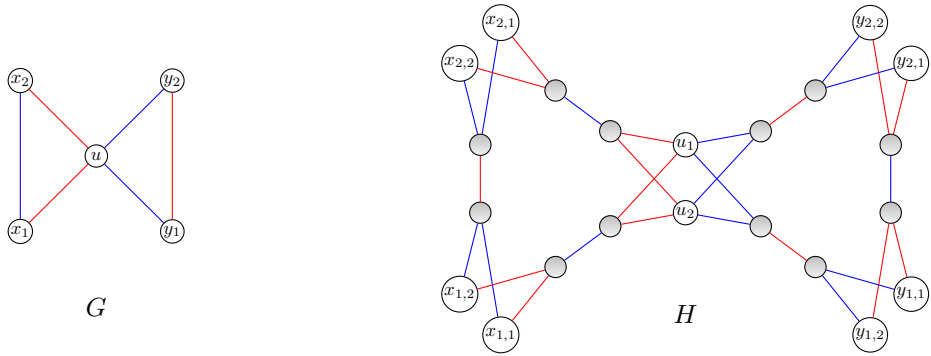
\begin{figure}[H]
\tikzstyle{vertexB}=[circle,draw, minimum size=12pt, scale=0.7, inner sep=0.5pt]
\tikzstyle{vertexBsmall}=[circle,draw, minimum size=10pt,  top color=gray!50, bottom color=gray!10, scale=0.8, inner sep=0.5pt]
\begin{tikzpicture}[scale=0.5]
 \node (x1) at (1,1) [vertexB] {$x_1$};
 \node (x2) at (1,5) [vertexB] {$x_2$};
 \node (u) at (3,3) [vertexB] {$u$};
 \node (y1) at (5,1) [vertexB] {$y_1$};
 \node (y2) at (5,5) [vertexB] {$y_2$};

  \draw [color=blue] (x1) to (x2);
  \draw [color=red] (x1) to (u);
  \draw [color=red] (x2) to (u);
  \draw [color=blue] (y2) to (u);
  \draw [color=blue] (y1) to (u);
  \draw [color=red] (y1) to (y2);
  \node [scale=0.99] at (3,-1) {$G$};
  \node [scale=0.99] at (3,-2) { };

\end{tikzpicture} \hfill
\begin{tikzpicture}[scale=0.9]
 \node [scale=0.99] at (4,1) {$H$};

 \node (x11) at (1.3,0.7) [vertexB] {$x_{1,1}$};
 \node (x12) at (0.7,1.3) [vertexB] {$x_{1,2}$};

 \node (x21) at (1.3,5.3) [vertexB] {$x_{2,1}$};
 \node (x22) at (0.7,4.7) [vertexB] {$x_{2,2}$};

 \node (u1) at (4,3.5) [vertexB] {$u_1$};
 \node (u2) at (4,2.5) [vertexB] {$u_2$};

% \node (v1) at (7,3.5) [vertexB] {$v_1$};
% \node (v2) at (7,2.5) [vertexB] {$v_2$};

 \node (y11) at (7.3,1.3) [vertexB] {$y_{1,1}$};
 \node (y12) at (6.7,0.7) [vertexB] {$y_{1,2}$};

 \node (y21) at (7.3,4.7) [vertexB] {$y_{2,1}$};
 \node (y22) at (6.7,5.3) [vertexB] {$y_{2,2}$};

 \node (x1x2a) at (1,2.5) [vertexBsmall] {};
 \node (x1x2b) at (1,3.5) [vertexBsmall] {};
  \draw [color=blue] (x11) to (x1x2a);
  \draw [color=blue] (x12) to (x1x2a);
  \draw [color=blue] (x21) to (x1x2b);
  \draw [color=blue] (x22) to (x1x2b);
  \draw [color=red] (x1x2a) to (x1x2b);

 \node (y1y2a) at (7,2.5) [vertexBsmall] {};
 \node (y1y2b) at (7,3.5) [vertexBsmall] {};
  \draw [color=red] (y11) to (y1y2a);
  \draw [color=red] (y12) to (y1y2a);
  \draw [color=red] (y21) to (y1y2b);
  \draw [color=red] (y22) to (y1y2b);
  \draw [color=blue] (y1y2a) to (y1y2b);

 \node (x2ua) at (2.1,4.3) [vertexBsmall] {};
 \node (x2ub) at (2.9,3.7) [vertexBsmall] {};
  \draw [color=red] (x21) to (x2ua);
  \draw [color=red] (x22) to (x2ua);
  \draw [color=red] (u1) to (x2ub);
  \draw [color=red] (u2) to (x2ub);
  \draw [color=blue] (x2ua) to (x2ub);

 \node (y2ua) at (5.9,4.3) [vertexBsmall] {};
 \node (y2ub) at (5.1,3.7) [vertexBsmall] {};
  \draw [color=blue] (y21) to (y2ua);
  \draw [color=blue] (y22) to (y2ua);
  \draw [color=blue] (u1) to (y2ub);
  \draw [color=blue] (u2) to (y2ub);
  \draw [color=red] (y2ua) to (y2ub);

 \node (x1ua) at (2.1,1.7) [vertexBsmall] {};
 \node (x1ub) at (2.9,2.3) [vertexBsmall] {};
  \draw [color=red] (x11) to (x1ua);
  \draw [color=red] (x12) to (x1ua);
  \draw [color=red] (u1) to (x1ub);
  \draw [color=red] (u2) to (x1ub);
  \draw [color=blue] (x1ua) to (x1ub);

 \node (y1ua) at (5.9,1.7) [vertexBsmall] {};
 \node (y1ub) at (5.1,2.3) [vertexBsmall] {};
  \draw [color=blue] (y11) to (y1ua);
  \draw [color=blue] (y12) to (y1ua);
  \draw [color=blue] (u1) to (y1ub);
  \draw [color=blue] (u2) to (y1ub);
  \draw [color=red] (y1ua) to (y1ub);
\end{tikzpicture}
\caption{An example of the transformation of $G$ into $H$ used in the proof of Theorem~\ref{thmTRAIL}.
Note that any alternating $(a,b)$-path in $H$ ($a,b \in \{x_{1,1},x_{1,2},x_{2,1},x_{2,2},u_1,u_2,y_{1,1},y_{1,2},y_{2,1},y_{2,2}\}$)
corresponds to an alternating $(a,b)$-trail in $G$ and vica versa.}
\label{TrailPic}
\end{figure}

\begin{corollary}
We can decide if a $2$-edge coloured graph is trail-colour-connected in polynomial time.
\end{corollary}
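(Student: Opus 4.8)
The plan is to combine Theorem~\ref{thmTRAIL} with the trail analogue of the colour-connectivity criterion, Lemma~\ref{lem:ccsuffTRAIL}. By that lemma, $G$ is trail-colour-connected if and only if for every ordered pair of distinct vertices $(u,v)$ and every colour $c\in\{1,2\}$ there is an alternating $(u,v)$-trail starting with colour $c$. So it suffices to show that each of these $O(n^2)$ conditions can be tested in polynomial time and to run the test for all of them.

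To test whether there is an alternating $(u,v)$-trail starting with colour $c$ (with no constraint on the colour of the last edge), I would invoke Theorem~\ref{thmTRAIL} twice: once asking for an alternating $(u,v)$-trail starting with colour $c$ and ending with colour $1$, and once asking for one starting with colour $c$ and ending with colour $2$. An alternating $(u,v)$-trail starting with colour $c$ exists if and only if at least one of these two queries answers ``yes'', since any such trail ends in one of the two colours. Each query runs in polynomial time by Theorem~\ref{thmTRAIL}.

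Altogether we perform $O(n^2)$ polynomial-time queries, and we declare $G$ trail-colour-connected precisely when all of them succeed; by Lemma~\ref{lem:ccsuffTRAIL} this is correct, and the whole procedure is clearly polynomial. There is no genuine obstacle in this corollary: all the real content is already in Theorem~\ref{thmTRAIL} (the reduction of the alternating-trail problem to an alternating-path problem in the auxiliary graph $H$, together with Lemma~\ref{lemPolPath}) and in Lemma~\ref{lem:ccsuffTRAIL}; the proof is just the bookkeeping that puts these two facts together.
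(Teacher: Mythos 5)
Your argument is correct and is exactly the intended derivation: the paper states this corollary as an immediate consequence of Theorem~\ref{thmTRAIL} combined with Lemma~\ref{lem:ccsuffTRAIL}, which is precisely what you do (running the trail test for each ordered pair and each starting colour, with both possible ending colours). Nothing is missing; your write-up just makes explicit the bookkeeping the paper leaves implicit.
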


\section{Eulerian factors and supereulerian graphs}\label{sec:supereuler}
Recall that a connected  undirected graph is eulerian if it has a spanning closed trail which uses every edge. By Euler's theorem \cite{eulerCASP8}, $G$ is eulerian if and only if it is connected and the degree of every vertex is even. This can be generalized to 2-edge-coloured graphs as follows.
 A 2-edge coloured graph $F$ is {\bf eulerian} if it contains a closed alternating trail which covers all the edges of $G$. Following the standard proof of Euler's theorem is easy to see that a connected 2-edge coloured graph $G$ is eulerian if and only if each vertex $v$ has even degree and half of the edges incident to $v$ have colour $i$ for $i\in [2]$. For a more general result on properly coloured eulertours in $k$-edge-coloured graphs, see \cite{kotzigMFC18}.  Following the same definitions for graphs and digraphs, we say that a  2-edge-coloured graph $G$ is {\bf supereulerian} if it contains a spanning subgraph which is eulerian. This is equivalent to saying that $G$ contains a spanning closed alternating trail.

An {\bf eulerian factor} of a 2-edge-coloured graph $G$ is a collection of 
vertex-disjoint induced subgraphs $G_1=(V_1,E_1),\ldots{},G_k=(V_k,E_k)$  of $G$, such that $V=V_1\cup\ldots\cup{}V_k$ and each $G_i$ is supereulerian.

\begin{lemma}\label{lem:findEF}
There exists a polynomial algorithm  for finding an eulerian factor of a 2-edge-coloured graph $G$ or producing a certificate that $G$ has no such factor.
\end{lemma}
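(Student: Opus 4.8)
The plan is to reduce the problem of finding an eulerian factor to a matching-type problem, or more directly to the problem of deciding, for each vertex $v$, whether $v$ lies in some small supereulerian induced subgraph, and then to show that covering $V(G)$ by vertex-disjoint supereulerian induced subgraphs is equivalent to a condition that can be checked greedily or via matching. The crucial structural observation I would try to establish first is a bound on the size of a minimal supereulerian induced subgraph: a connected 2-edge-coloured graph is eulerian iff every vertex has even degree with an equal split between the two colours, and the smallest such nontrivial graph is an alternating cycle, which may be arbitrarily long, so one cannot simply enumerate constant-size candidates. Instead I would look for the following dichotomy for a single vertex: $v$ is contained in some supereulerian induced subgraph of $G$ if and only if $v$ is contained in an alternating cycle of $G$, because the vertex set of any alternating cycle through $v$ induces a supereulerian subgraph (an induced subgraph containing an alternating cycle need not be eulerian, but if it contains an alternating cycle it is supereulerian precisely because it contains a spanning-of-that-cycle closed alternating trail—wait, this is the subtlety).

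More carefully, the key reduction I would carry out is: $G$ has an eulerian factor if and only if $G$ has a spanning subgraph $H$ (not necessarily induced) in which every vertex has positive even degree with half its incident edges in each colour, i.e. $G$ has a spanning subgraph that is a vertex-disjoint union of "eulerian components". The point is that an eulerian factor's induced subgraphs $G_i$, being supereulerian, each contain a spanning closed alternating trail $T_i$, and $\bigcup_i T_i$ is such a spanning subgraph; conversely given such a spanning subgraph $H$ with components $H_1,\dots,H_k$, taking $G_i = G[V(H_i)]$ gives induced subgraphs that contain the eulerian $H_i$ and hence are supereulerian, and they partition $V$. So the problem becomes: does $G$ have a spanning subgraph in which each vertex $v$ has even degree $\ge 2$, split evenly between colours? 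This is now a purely local degree-constrained subgraph problem. I would then model it as a degree-constrained subgraph / $b$-matching problem: orient the red/blue split by asking at each vertex for $d_1(v)=d_2(v)=d(v)/2$ and $d(v)\ge 2$; such "balanced parity" constraints on a 2-edge-coloured graph can be encoded by a standard gadget reduction to perfect matching (the classical Tutte-style reduction for degree-constrained subgraphs, extended to handle the two colour classes independently at each vertex).

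Concretely, the steps I expect to take: (1) Prove the equivalence between "has an eulerian factor" and "has a spanning subgraph with every vertex of positive even degree and equal colour split"; this is the conceptual heart and needs the observation that a connected such subgraph is eulerian by the generalized Euler theorem quoted just above the lemma. (2) For the degree-constrained spanning subgraph, split each vertex $v$ into a "colour-1 side" and a "colour-2 side", route the colour-$i$ edges at $v$ into the colour-$i$ side, and impose that the number of chosen edges on each side equals a common value $t_v$ with $1 \le t_v \le \min(d_1(v),d_2(v))$; model the freedom in $t_v$ and the equality of the two sides with an auxiliary gadget; then invoke the polynomial solvability of general matching (Edmonds) — or, since the constraint "$d_1(v)=d_2(v)$, both $\ge 1$" is a so-called parity/matroid-intersection-friendly constraint, one may alternatively route it through a $T$-join or $f$-factor formulation. (3) Extract the factor and the certificate of non-existence from the matching (a Tutte–Berge-type barrier translates into a set witnessing that no eulerian factor exists). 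The main obstacle is step (2): getting the gadget exactly right so that the "equal split, at least one of each colour" condition at every vertex is faithfully captured by a single matching instance of polynomial size — in particular handling parallel edges and making sure a vertex is never left isolated (degree $0$) in the chosen subgraph, since a single-vertex induced subgraph is not supereulerian. I would double-check the base cases (isolated vertices in $G$, vertices incident to only one colour) where the answer is immediately "no eulerian factor", and make sure these are correctly forced infeasible by the construction.
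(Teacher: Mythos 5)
Your proposal is correct and takes essentially the same route as the paper: the paper's proof also rests on the (implicit) equivalence between an eulerian factor and a spanning subgraph in which every vertex is incident with equally many chosen red and blue edges, at least one of each, and then encodes this condition as a single perfect matching instance via a per-vertex gadget solved by a standard matching algorithm. The gadget you flag as the main obstacle is exactly what the paper supplies: for each vertex $x$ with $r(x)$ red and $b(x)$ blue edges it introduces sets $R(x),R'(x),B'(x),B(x)$ of sizes $r(x),\,r(x)-1,\,b(x)-1,\,b(x)$, joins consecutive sets completely in a chain, and attaches the matching edges corresponding to red (blue) edges of $G$ to $R(x)$ (respectively $B(x)$) with exactly one attachment per gadget vertex, which forces the ``equal split, at least one edge of each colour'' condition at every vertex.
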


\begin{proof}
Let $G$ be a 2-edge-coloured graph.
 We will construct a new graph, $H$, such that $H$ has a perfect matching if and only if $G$ has a eulerian factor.
For each vertex $x \in V(G)$ do the following.  Let $x$ be incident with $b(x)$ blue edges and $r(x)$ red edges. Let $R(x),R’(x),B(x),B’(x)$ be vertex-sets in $H$ of the following sizes $|R(x)|=r(x)=|R'(x)|+1$ and $|B(x)|=b(x)=|B’(x)|+1$.  Now add all edges between $R(x)$ and $R’(x)$ and between $R’(x)$ and $B’(x)$ and between $B’(x)$ and $B(x)$.  Finally add an edge from $B(x)$ to $B(y)$ if there is a blue edge $xy$ in $G$ and add an edge between $R(x)$ and $R(y)$ if there is a red edge in $G$. And do this such that each vertex in $R(x)$ and $B(x)$ is incident with exactly one such edge (which can be done by the construction of $R(x),B(x)$).
\iffalse
  For each vertex $x \in V(G)$ do the following.  Let $x$ be incident with $b(x)$ blue edges and $r(x)$ red edges and let $a=\max\{r(x),b(x)\}$. Let $R(x),R’(x),B(x),B’(x)$ be vertex-sets in $H$ of the following sizes $|R(x)|=|B(x)|=a$ and $|R’(x)|=|B’(x)|=a-1$.  Now add all edges between $R(x)$ and $R’(x)$ and between $R’(x)$ and $B’(x)$ and between $B’(x)$ and $B(x)$.  Finally add an edge from $B(x)$ to $B(y)$ if there is a blue edge $xy$ in $G$ and add an edge between $R(x)$ and $R(y)$ if there is a red edge in $G$. And do this such that each vertex in $R(x)$ and $B(x)$ is incident with at most one such edge (which can be done as $a \geq r(x), b(x)$).
\fi
  Now it is easy to check that $H$ has a perfect matching if and only if $G$ has a eulerian factor.  The factor goes through $x$ $k$ times if the matching uses $k-1$ edges between $B’(x)$ and $R’(x)$. See Figure \ref{fig:Efig} and \ref{fig:Mfig} for an illustration of the reduction.
\end{proof}

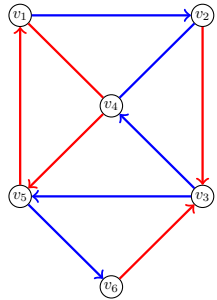
\begin{figure}[H]
\begin{center}
\tikzstyle{vertexr}=[circle,draw, color=red, minimum size=7pt, scale=0.6, inner sep=0.5pt]
\tikzstyle{vertexb}=[circle,draw, color=blue,minimum size=7pt, scale=0.6, inner sep=0.5pt]
\tikzstyle{vertexN}=[circle,draw, minimum size=14pt, scale=0.6, inner sep=0.5pt]
\tikzstyle{vertexR}=[circle,draw, top color=red!100, bottom color=red!100, minimum size=7pt, scale=0.6, inner sep=0.5pt]
\tikzstyle{vertexB}=[circle,draw, top color=blue!100, bottom color=blue!100, minimum size=7pt, scale=0.6, inner sep=0.5pt]
\begin{tikzpicture}[scale=0.6]
\node (y1) at (0,6) [vertexN]{$v_1$};
\node (y2) at (4,6) [vertexN]{$v_2$};
\node (y3) at (4,2) [vertexN]{$v_3$};
\node (y4) at (2,4) [vertexN]{$v_4$};
\node (y5) at (0,2) [vertexN]{$v_5$};
\node (y6) at (2,0) [vertexN]{$v_6$};
\draw[->,  line width=0.03cm, color=blue]  (y1) to (y2);
\draw[->,  line width=0.03cm, color=red]  (y2) to (y3);
\draw[->,  line width=0.03cm, color=blue]  (y3) to (y4);
\draw[->,  line width=0.03cm, color=red]  (y4) to (y5);
\draw[->,  line width=0.03cm, color=blue]  (y5) to (y6);
\draw[->,  line width=0.03cm, color=red]  (y6) to (y3);
\draw[->,  line width=0.03cm, color=blue]  (y3) to (y5);
\draw[->,  line width=0.03cm, color=red]  (y5) to (y1);
\draw[line width=0.03cm, color=red]  (y4) to (y1);
\draw[line width=0.03cm, color=blue]  (y4) to (y2);
%\draw[line width=0.03cm, color=blue]  (y4) to (y6);

\end{tikzpicture}
\caption{A 2-edge-coloured graph $G$ with a spanning closed alternating trail in $G$ (indicated as directed edges).}\label{fig:Efig}
\end{center}
\end{figure}
\newcommand{\ls}{line width=0.03cm, dotted}
\newcommand{\lf}{line thick}
\newcommand{\lsb}{line width=0.03cm, color=blue}
\newcommand{\lsr}{line width=0.03cm, color=red}
\newcommand{\lB}{line thick, color=blue}
\newcommand{\lR}{line thick, color=red}

%\begin{figure}[!hbtp]
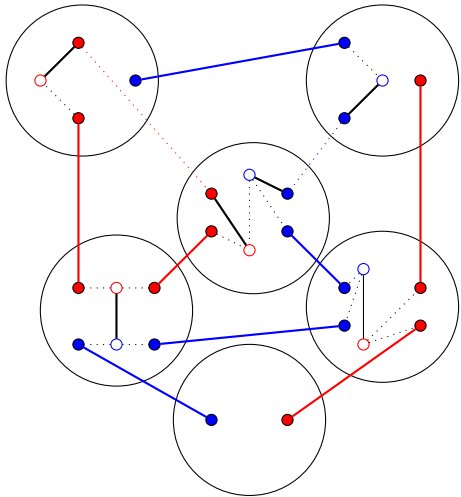
\begin{figure}[H]
\begin{center}
\tikzstyle{vertexr}=[circle,draw, color=red, minimum size=7pt, scale=0.6, inner sep=0.5pt]
\tikzstyle{vertexb}=[circle,draw, color=blue,minimum size=7pt, scale=0.6, inner sep=0.5pt]
\tikzstyle{vertexN}=[circle,draw, minimum size=14pt, scale=0.6, inner sep=0.5pt]
\tikzstyle{vertexR}=[circle,draw, top color=red!100, bottom color=red!100, minimum size=7pt, scale=0.6, inner sep=0.5pt]
\tikzstyle{vertexB}=[circle,draw, top color=blue!100, bottom color=blue!100, minimum size=7pt, scale=0.6, inner sep=0.5pt]
\tikzstyle{vertexC}=[circle,draw,  minimum size=95pt, scale=0.6, inner sep=0.5pt]

\begin{tikzpicture}[scale=0.5]
\node () at (1.1,9) [vertexC] {};
\node () at (9,9) [vertexC] {};
\node () at (9,3) [vertexC] {};
\node () at (5.6,5.35) [vertexC] {};
\node () at (2,2.9) [vertexC] {};
\node () at (5.5,0) [vertexC] {};
\node (y11) at (0,9) [vertexr] {};
\node (y12) at (1,10) [vertexR] {};
\node (y13) at (1,8) [vertexR] {};
\node (y14) at (2.5,9) [vertexB] {};
\node (y21) at (8,10) [vertexB] {};
\node (y22) at (8,8) [vertexB] {};
\node (y23) at (9,9) [vertexb] {};
\node (y24) at (10,9) [vertexR] {};
\node (y41) at (4.5,6) [vertexR] {};
\node (y42) at (4.5,5) [vertexR] {};
\node (y43) at (5.5,4.5) [vertexr] {};
\node (y44) at (5.5,6.5) [vertexb] {};
\node (y45) at (6.5,6) [vertexB] {};
\node (y46) at (6.5,5) [vertexB] {};
\node (y31) at (8,3.5) [vertexB] {};
\node (y32) at (8,2.5) [vertexB] {};
\node (y35) at (10,3.5) [vertexR] {};
\node (y36) at (10,2.5) [vertexR] {};
\node (y33) at (8.5,4) [vertexb] {};
\node (y34) at (8.5,2) [vertexr] {};
\node (y51) at (1,3.5) [vertexR] {};
\node (y52) at (3,3.5) [vertexR] {};
\node (y53) at (2,3.5) [vertexr] {};
\node (y54) at (1,2) [vertexB] {};
\node (y55) at (3,2) [vertexB] {};
\node (y56) at (2,2) [vertexb] {};
\node (y61) at (4.5,0) [vertexB] {};
\node (y62) at (6.5,0) [vertexR] {};
\draw[thick, color=blue] (y54) to (y61);

\draw[dotted] (y13) to (y11);
\draw[thick] (y11) to (y12);
\draw[thick, color=blue] (y14) to (y21);
\draw[dotted, color=red] (y12) to (y41);
\draw[thick, color=red] (y13) to (y51);
\draw[dotted] (y21) to (y23);
\draw[thick] (y22) to (y23);
\draw[dotted, color=blue] (y22) to (y45);
\draw[thick, color=red] (y24) to (y35);
\draw[dotted] (y33) to (y31);
\draw[dotted] (y33) to (y32);
\draw (y33) to (y34);
\draw[dotted] (y34) to (y35);
\draw[dotted] (y34) to (y36);
\draw[thick, color=blue] (y31) to (y46);
\draw[thick, color=blue] (y32) to (y55);
\draw[thick, color=red] (y36) to (y62);
\draw[dotted] (y42) to (y43);
\draw[thick] (y41) to (y43);
\draw[thick] (y44) to (y45);
\draw[dotted] (y44) to (y43);
\draw[dotted] (y44) to (y46);
\draw[thick, color=red] (y42) to (y52);
\draw[dotted] (y53) to (y51);
\draw[dotted] (y53) to (y52);
\draw[thick] (y53) to (y56);
\draw[dotted] (y56) to (y54);
\draw[dotted] (y56) to (y55);

\end{tikzpicture}
\caption{The graph $H=H(G)$ constructed as in the proof of Lemma \ref{lem:findEF}. The perfect matching corresponding to the spanning eulerian subgraph indicated in Figure \ref{fig:Efig} is shown with full lines. The colours are just for easy reference to the other figure.}\label{fig:Mfig}
\end{center}
\end{figure}

The following result on supereulerian digraphs is Theorem 2.8 in \cite{bangJGT79}. A digraph is {\bf semicomplete multipartite} if the underlying undirected graph is a complete multipartite graph. 
\begin{theorem}\cite{bangJGT79}
A semicomplete multipartite digraph is supereulerian if and only if is is strongly connected and has an eulerian factor.
\end{theorem}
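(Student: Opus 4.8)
The plan is to prove the easy implication directly and to obtain the substantial one by minimising the number of parts of an eulerian factor.

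\textbf{Supereulerian implies strong with an eulerian factor.} If $D$ has a spanning closed trail $T$, then $\{D\}$ is an eulerian factor of $D$, and, since $T$ meets every vertex, following $T$ forwards from $u$ until it first reaches $v$ produces a directed $(u,v)$-walk for every ordered pair $(u,v)$; hence $D$ is strong.

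\textbf{Strong with an eulerian factor implies supereulerian.} Suppose $D$ is strong and choose an eulerian factor $\mathcal F=\{D_1,\dots,D_k\}$ of $D$ with $k$ minimum. For each $i$ fix a spanning closed trail $T_i$ of $D_i$ and write $V_i=V(D_i)$. We may assume that every $D_i$ has at least two vertices (under the usual convention a supereulerian digraph has an arc; one-vertex parts, if permitted, are absorbed by the extra case analysis below), so that $V_i$ meets at least two partite sets of $D$. If $k=1$ we are done; so assume $k\ge 2$, and let us contradict the minimality of $k$ by producing an induced subdigraph of $D$ that is supereulerian and whose vertex set is the union of some of the $V_i$ — such a subdigraph may replace the corresponding members of $\mathcal F$, lowering $k$.

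Contract each $V_i$ to a single vertex to get a digraph $D^\ast$ on $k$ vertices; since $D$ is strong, so is $D^\ast$. Because $D$ is semicomplete multipartite and each $V_i$ meets at least two partite sets, there is an arc of $D$ between any two of the $V_i$, so the underlying graph of $D^\ast$ is complete; a strong digraph on $k\ge 2$ vertices with complete underlying graph contains a cycle of length $2$ or $3$, say $C^\ast\colon W_1\to W_2\to\cdots\to W_m\to W_1$ with $m\in\{2,3\}$, each $W_t$ being one of the $V_i$. The super-arcs of $C^\ast$ lift to arcs $a_t\to b_{t+1}$ of $D$ with $a_t\in W_t$, $b_{t+1}\in W_{t+1}$ (indices modulo $m$). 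Suppose that inside each $W_t$ there is a trail covering $V(W_t)$ and running from $b_t$ to $a_t$; this certainly holds whenever $a_t=b_t$, since then $T_t$ rerooted at that vertex works. Concatenating these $m$ trails with the connecting arcs $a_1\to b_2,\dots,a_{m-1}\to b_m,a_m\to b_1$ then gives a spanning closed trail of $D[W_1\cup\cdots\cup W_m]$, so that induced subdigraph is supereulerian, contradicting the minimality of $k$.

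\textbf{Where the work is.} It remains to arrange that the short cycle $C^\ast$, together with a choice of the arcs of $D$ realising its super-arcs, can be picked so that every part $W_t$ has the required covering $(b_t,a_t)$-trail; the cleanest sufficient condition is $a_t=b_t$ for all $t$, i.e. that each relevant part contains a vertex receiving an arc from the preceding part and sending an arc to the following part (after which $T_t$ is rerouted to begin and end there). This is precisely the step that uses the semicomplete multipartite hypothesis, via the many arcs forced between vertices lying in distinct partite sets. I expect it to require a case analysis according to whether $m=2$ (a digon of $D^\ast$) or $m=3$, to the sizes of the parts involved — one-vertex parts being the delicate case, as for them neither rerouting nor a two-sided detour into a neighbouring part need be available — and to the ``rigid'' parts, such as a directed triangle, for which $a_t=b_t$ is genuinely forced; if one short cycle admits no workable choice, one argues that the arc-density of $D^\ast$ forces another one that does. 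This case analysis is the main obstacle.
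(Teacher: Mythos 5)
Your proposal is not a proof: the step you label ``Where the work is'' is precisely the content of the theorem, and you leave it undone. After contracting the parts $V_i$ and finding a $2$- or $3$-cycle $C^\ast$ in $D^\ast$, the merge only goes through if each part $W_t$ on $C^\ast$ admits a \emph{spanning} trail from its entry vertex $b_t$ to its exit vertex $a_t$; an eulerian factor only guarantees a closed spanning trail of each part, so this fails in general unless you can force $a_t=b_t$ (or establish the open trail by other means). Nothing in your sketch shows that the lifted arcs can be chosen with $a_t=b_t$, and your fallback (``the arc-density of $D^\ast$ forces another short cycle that works'') is a hope, not an argument. The genuinely hard configuration is exactly the one where it cannot be arranged: when all arcs between two parts are ``aligned'' so that every choice of connecting arcs lands and leaves at incompatible places, one is driven to a domination-type structure between the trails, the contracted digraph can degenerate to a transitive pattern, and a third trail has to be brought in. This is the analogue of what the present paper has to do in Lemma \ref{lem:2trails} and Theorem \ref{thm:exMclEchar} ($c$-domination, the tournament $W$ being transitive, and the merge of three trails as in Figures \ref{fig:3cycle} and \ref{fig:TT3}); skipping it skips the theorem. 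There are also smaller unresolved points: if singleton parts are admitted, two of them may lie in the same partite set, so the underlying graph of $D^\ast$ need not be complete, and your parenthetical convention does not dispose of this.

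Note also that the paper itself does not prove this statement; it is quoted as Theorem 2.8 of \cite{bangJGT79}, where the proof carries out exactly the structural case analysis you defer. So your setup (minimise $k$, contract, find a short cycle, concatenate trails) is a reasonable opening and is consistent in spirit with how both \cite{bangJGT79} and this paper's Section on supereulerian extensions proceed, but as written it establishes only the easy direction plus a framework, not the result.
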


Since a bipartite tournament is a semicomplete multipartite digraph, the BB-correspondence implies the following characterization of supereulerian 2-edge-coloured complete bipartite graphs.

\begin{corollary}
\label{cor:compbip}
A 2-edge-coloured complete bipartite graph $G$ is supereulerian if and only if $G$ is colour-connected and has an eulerian factor.
\end{corollary}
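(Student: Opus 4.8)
The plan is to derive Corollary~\ref{cor:compbip} directly from the preceding Theorem~\cite{bangJGT79} on supereulerian semicomplete multipartite digraphs, using the BB-correspondence exactly as it was used in Theorems~\ref{thm:npcbip2ec} and~\ref{thm:bip2echam}. First I would recall that given a 2-edge-coloured complete bipartite graph $G=(X,Y,E)$, the BB-correspondence produces a bipartite tournament $B(G)$: each red edge $xy$ with $x\in X$ becomes the arc $x\dom y$, each blue edge becomes the reverse arc. Since $G$ is complete bipartite, there is exactly one edge, hence exactly one arc, between each vertex of $X$ and each vertex of $Y$, so $B(G)$ is indeed a bipartite tournament, and in particular a semicomplete multipartite digraph. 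The correspondence is a bijection between alternating subgraphs of $G$ and directed subgraphs of $B(G)$ that preserves walks, trails, cycles and paths, and it preserves the vertex set; so a spanning closed alternating trail in $G$ corresponds to a spanning closed (directed) trail in $B(G)$ and vice versa. Hence $G$ is supereulerian if and only if $B(G)$ is supereulerian.

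Next I would translate the two conditions on the right-hand side of the cited theorem. For strong connectivity: it was already noted in the excerpt (in the paragraph before Theorem~\ref{thm:bip2echam}) that $G$ is colour-connected if and only if $B(G)$ is strongly connected, so I would just invoke that. For the eulerian factor condition: an eulerian factor of $G$ is a partition of $V(G)$ into induced subgraphs each of which is supereulerian; applying the BB-correspondence component-by-component, an induced subgraph $G_i$ of $G$ on vertex set $V_i$ corresponds exactly to the induced subdigraph $B(G)[V_i]$, and $G_i$ supereulerian is equivalent to $B(G)[V_i]$ supereulerian by the argument of the previous paragraph. Thus an eulerian factor of $G$ corresponds precisely to an eulerian factor of the semicomplete multipartite digraph $B(G)$, and conversely. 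Chaining these equivalences: $G$ supereulerian $\iff$ $B(G)$ supereulerian $\iff$ $B(G)$ strong and has an eulerian factor $\iff$ $G$ colour-connected and has an eulerian factor, which is exactly the statement.

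There is essentially no serious obstacle here; the corollary is a routine transfer result and the only thing that needs a little care is making sure the BB-correspondence really is a bijection at the level of induced subgraphs and of spanning closed trails, including the requirement (stressed in the definitions section) that a spanning closed alternating trail has its first and last edges of different colours. This is automatic: in an alternating closed trail the colours alternate and the number of edges is even, so the first and last edges differ in colour, and on the digraph side a closed trail simply closes up consistently under the arc orientation, so no extra condition is lost or gained. I would therefore keep the proof to two or three sentences, essentially: "By the BB-correspondence $G$ is supereulerian (resp.\ colour-connected, resp.\ has an eulerian factor) if and only if the bipartite tournament $B(G)$ is supereulerian (resp.\ strong, resp.\ has an eulerian factor); now apply Theorem~\cite{bangJGT79}." The main thing to double-check while writing is that $B(G)$ is genuinely semicomplete multipartite (it is, being a bipartite tournament) so that the cited theorem applies verbatim.
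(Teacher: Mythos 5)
Your proposal is correct and follows exactly the paper's route: the paper also obtains this corollary by noting that a bipartite tournament is a semicomplete multipartite digraph and transferring the supereulerian characterization of \cite{bangJGT79} through the BB-correspondence, with colour-connectivity corresponding to strong connectivity and eulerian factors corresponding on both sides. Your additional checks (that $B(G)$ is a bipartite tournament and that the correspondence respects induced subgraphs and spanning closed trails) are exactly the routine verifications the paper leaves implicit.
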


As both the problem of deciding if a 2-edge-coloured graph is colour-connected  and the problem of deciding if it contains an eulerian factor are polynomial
time solvable, we note that Corollary~\ref{cor:compbip} implies that we in polynomial time can decide if a 
2-edge-coloured complete bipartite graph is supereulerian.

\section{Alternating Hamiltonian cycles in extensions of M-closed 2-edge-coloured graphs}
In \cite{balbuenaDMTCS21} the authors consider a generalization of 2-edge-coloured complete multigraphs, namely those 2-edge-coloured graphs for which the end-vertices of every monochromatic path of length 2 are adjacent, that is, if $xyz$ is a path and $\phi{}(xy)=\phi{}(yz)$, then $xz$ is an edge of the graph. The authors call such graphs {\bf M-closed}. The following is the main result of \cite{balbuenaDMTCS21}.

\begin{theorem}\cite{balbuenaDMTCS21}
  \label{thm:MclHC}
Let $G$ be a 2-edge-coloured graph which is M-closed. Then $G$ has an alternating hamiltonian cycle if and only if it is colour-connected and has an
alternating cycle factor. 
\end{theorem}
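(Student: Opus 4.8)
The plan is as follows. The necessity of the two conditions is immediate: an alternating hamiltonian cycle $C$ of $G$ is by itself an alternating cycle factor, and for any two distinct vertices $u,v$ the two $(u,v)$-arcs of $C$ are alternating $(u,v)$-paths whose union is $C$, which as a closed walk based at $u$ is alternating (at $u$ and at $v$ the walk uses the two cycle-edges at that vertex, which have different colours). So the content is the sufficiency, which I would prove by an extremal/exchange argument in the spirit of Saad's proof of Theorem~\ref{thm:saad} (equivalently of Theorem~\ref{thm:HCchar}): among all alternating cycle factors of $G$ pick one, $\mathcal F=\{C_1,\dots,C_k\}$, using as few cycles as possible; if $k=1$ we are done, so assume $k\ge 2$ and derive a contradiction by producing an alternating cycle on $V(C_1)\cup V(C_2)$ (for a suitable labelling), which together with $C_3,\dots,C_k$ would be a factor with $k-1$ cycles.

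To merge two cycles I would use the M-closed property as a source of edges, exactly where the argument for complete graphs uses completeness. Since $G$ is colour-connected it is connected, so some edge $e=ab$ joins two cycles of $\mathcal F$, say $a\in V(C_1)$, $b\in V(C_2)$; orient $C_1,C_2$ cyclically and, for a vertex $x$ of an oriented alternating cycle, write $x^-,x^+$ for its neighbours, so that $\phi(x^-x)\neq\phi(xx^+)$. The key move is \emph{chord generation}: if $pq\in E(G)$ and $r$ is a cycle-neighbour of $p$ with $\phi(rp)=\phi(pq)$, then $rpq$ is monochromatic, so M-closedness gives $rq\in E(G)$, and the move can be iterated at $rq$. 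The clean case is a \emph{two-edge exchange}: if one can find consecutive $x,x^+$ on $C_1$ and consecutive $z,z^+$ on $C_2$ with $xz,x^+z^+\in E(G)$ and $\phi(xx^+)=\phi(zz^+)=\phi(xz)=\phi(x^+z^+)=:c$, then deleting $xx^+$ from $C_1$ and $zz^+$ from $C_2$ and inserting $xz$ and $x^+z^+$ yields an alternating cycle on $V(C_1)\cup V(C_2)$; the colour check is routine once one notes that the paths $C_1-xx^+$ and $C_2-zz^+$ each have both end-edges of colour $\overline{c}$ and odd length, so that cyclically the colours read $c,\overline{c},c,\dots,\overline{c},c,\overline{c},c,\dots,\overline{c}$. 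Starting from $e$, after fixing the orientations so that $\phi(e)$ agrees with the out-colour of $a$ on $C_1$ and of $b$ on $C_2$, chord generation at $a$ and at $b$ produces, in a couple of steps, candidates for such a configuration.

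The hard part is that a two-edge exchange need not exist — this already happens for complete graphs, which is why the proofs of Theorems~\ref{thm:HCchar} and~\ref{thm:saad} are nontrivial — and the remaining cases are the heart of the argument. Here I would mimic Saad's method: when no local exchange is available, invoke colour-connectivity (through Lemma~\ref{lem:ccsuff}) to obtain an alternating path between $C_1$ and $C_2$ with prescribed end-colours, which may run through other cycles $C_i$ of $\mathcal F$; those cycles are then absorbed into a simultaneous merge of three or more cycles, again by deleting one suitable edge from each cycle involved and re-stitching with the path together with chords supplied by M-closure, and again verifying that the result is a single alternating cycle. The real work is in making this case analysis exhaustive — in particular, using the minimality of $k$ together with the colour constraints to pin down which colours the M-closure-generated chords must carry — and in the parity/colour bookkeeping that guarantees every re-stitched closed walk is genuinely an alternating cycle. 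The degenerate alternating cycles of length two (a pair of parallel edges of opposite colours) require a little extra care in the chord-generation steps but cause no essential difficulty.
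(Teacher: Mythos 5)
Your necessity argument and the overall extremal setup (a cycle factor with the minimum number of cycles, $k\ge 2$ leading to a contradiction) match the strategy of the cited proof, and your ``two-edge exchange'' is exactly Lemma~\ref{lem4cycle}. But the heart of the theorem is precisely the case you leave open: what happens when no such exchange between two cycles exists. At that point you propose to ``mimic Saad's method'' by using colour-connectivity to route an alternating path between $C_1$ and $C_2$ through other cycles of the factor and then perform a simultaneous re-stitching of three or more cycles, verifying alternation afterwards. This is not carried out, and it is not clear it can be: Saad's argument (Theorems~\ref{thm:saad} and~\ref{thm:HCchar}) leans on completeness, i.e.\ on every pair of vertices being adjacent, which an M-closed graph does not provide; deleting one edge from each of several cycles and stitching them along a path requires chords whose existence and colours you would have to extract from M-closedness, and that is exactly the analysis you defer with ``the real work is in making this case analysis exhaustive.''

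The actual proof in \cite{balbuenaDMTCS21} does something structurally different and purely local-to-a-pair: it establishes a dichotomy (Lemma~\ref{lem:altcls}, Lemma~\ref{lem:cor7}, culminating in the domination structure of Lemma~\ref{lem:dom}) saying that if two disjoint alternating cycles joined by at least one edge cannot be merged into a single alternating cycle on their union, then \emph{every} vertex of one is adjacent to every vertex of the other and the colours follow the rigid pattern $C_i{\stackrel{c}{\rightarrow}}C_j$; this pattern already makes $G[V(C_i)\cup V(C_j)]$ fail colour-connectivity. With a minimum factor no pair can be merged, so every pair of cycles with an edge between them is in a domination relation, and by considering only the edges between cycles one contradicts the colour-connectivity of $G$ -- no multi-cycle merge and no path through intermediate cycles is ever needed. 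Your proposal never identifies this dichotomy, which is the missing idea; without it (or a worked-out substitute for your multi-cycle stitching), the sufficiency direction is not proved.
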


The example in Figure \ref{fig:halfM}, which can easily be extended to an infinite family, shows that the definition of being M-closed cannot be relaxed to a requirement only for monochromatic paths of length 2 of one of the two colours.

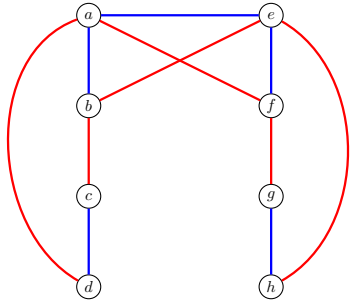
\begin{figure}[H]
\begin{center}
\tikzstyle{vertexr}=[circle,draw, color=red, minimum size=7pt, scale=0.6, inner sep=0.5pt]
\tikzstyle{vertexb}=[circle,draw, color=blue,minimum size=7pt, scale=0.6, inner sep=0.5pt]
\tikzstyle{vertexN}=[circle,draw, minimum size=14pt, scale=0.6, inner sep=0.5pt]
\tikzstyle{vertexR}=[circle,draw, top color=red!100, bottom color=red!100, minimum size=7pt, scale=0.6, inner sep=0.5pt]
\tikzstyle{vertexB}=[circle,draw, minimum size=15pt, scale=0.6, inner sep=0.5pt]
\begin{tikzpicture}[scale=0.6]
\node(a) at (0,6) [vertexB] {$a$};
\node(b) at (0,4) [vertexB] {$b$};
\node(c) at (0,2) [vertexB] {$c$};
\node(d) at (0,0) [vertexB] {$d$};
\node(e) at (4,6) [vertexB] {$e$};
\node(f) at (4,4) [vertexB] {$f$};
\node(g) at (4,2) [vertexB] {$g$};
\node(h) at (4,0) [vertexB] {$h$};
\draw[line width=0.03cm, color=blue] (a) to (b);
\draw[line width=0.03cm, color=blue] (c) to (d);
\draw[line width=0.03cm, color=blue] (e) to (f);
\draw[line width=0.03cm, color=blue] (g) to (h);
\draw[line width=0.03cm, color=red] (b) to (c);
\draw[line width=0.03cm, color=red] (d) to [out=150,in=200] (a);
\draw[line width=0.03cm, color=red] (f) to (g);
\draw[line width=0.03cm, color=red] (e) to [out=-30,in=30] (h);
\draw[line width=0.03cm, color=blue] (a) to (e);
\draw[line width=0.03cm, color=red] (a) to (f);
\draw[line width=0.03cm, color=red] (b) to (e);

\end{tikzpicture}
\caption{A 2-edge-coloured graph $G$ in which the end vertices $x,z$ are adjacent for  every path $xyz$ with $\phi{}(xy)=\phi{}(yz)=1$ (1=blue). $G$ is colour-connected and has a cycle factor but it has no alternating hamiltonian cycle. It also has no spanning closed alternating trail.}\label{fig:halfM}
\end{center}
\end{figure}

\begin{figure}[H]
\centering{\includegraphics{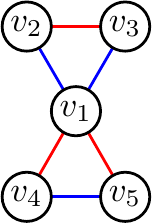}}
\caption{A non colour-connected graph with a spanning closed alternating trail.}
\label{fig:notcc}
\end{figure}
The 2-edge-coloured graph in Figure \ref{fig:notcc} is M-closed and is eulerian but not colour-connected. Hence for M-closed 2-edge-coloured graphs, having a spanning closed alternating trail does not imply colour-connectivity. \jbjb{Note that the graph is trail-colour-connected, as is every 2-edge-coloured graph with a spanning closed altenating trail.}\\

\jbj{
We will now argue that by carefully checking the proof of Theorem \ref{thm:MclHC}  in \cite{balbuenaDMTCS21}  one can verify that the following generalization holds. 

\begin{theorem}\label{thm:extMclHC}
  Let $G$ be an extension of an  M-closed 2-edge-coloured graph. Then $G$ has an alternating hamiltonian cycle if and only if $G$ is colour-connected and has an alternating cycle factor.
\end{theorem}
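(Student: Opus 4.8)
Necessity is immediate: if $H$ has an alternating hamiltonian cycle $C$, then $\{C\}$ is an alternating cycle factor, and for any two distinct vertices $u,v$ the two arcs of $C$ joining them are alternating $(u,v)$-paths whose union is the alternating closed walk $C$, so $H$ is colour-connected. The substance of the theorem is the converse, and the plan is to show that the proof of Theorem~\ref{thm:MclHC} given in~\cite{balbuenaDMTCS21} goes through essentially verbatim for extensions. The basic reason this should work is the following near-M-closedness of extensions. Let $H=G[I_{p_1},\ldots,I_{p_n}]$ be an extension of an M-closed graph $G$ and let $xyz$ be a monochromatic path in $H$ with $\phi(xy)=\phi(yz)$. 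Since blow-up classes are independent, the class $I_j$ containing $y$ is different from the classes $I_i\ni x$ and $I_k\ni z$. If $i\neq k$ then $v_iv_jv_k$ is a monochromatic path in $G$, so $v_iv_k\in E(G)$ and hence $xz\in E(H)$; the conclusion $xz\in E(H)$ can fail only when $x$ and $z$ are (non-adjacent) twins, i.e.\ $i=k$. Thus $H$ is ``M-closed except between twins''.

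Recall that Bankfalvi--Bankfalvi type theorems, including Theorem~\ref{thm:MclHC}, are proved by taking an alternating cycle factor $\mathcal{F}$ that is extremal in a suitable sense (fewest cycles, plus some secondary optimality), assuming $|\mathcal{F}|\ge 2$ for contradiction, and then using colour-connectivity to reroute a bounded number of edges so as to splice two cycles of $\mathcal{F}$ into a single alternating cycle on the same vertex set, contradicting extremality. Each splicing step creates monochromatic paths of length two and appeals to M-closedness to produce a chord that restores alternation. The plan is therefore to enumerate every such appeal in the argument of~\cite{balbuenaDMTCS21} and check, case by case, whether the two endpoints of the relevant monochromatic $P_3$ could be twins. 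In most branches the middle vertex and the two endpoints come from distinct cycles of $\mathcal{F}$ (or are otherwise forced into distinct blow-up classes), so M-closedness applies unchanged; and whenever the two endpoints $x,z$ do happen to lie in a common class $I_i$, one can complete the splice by rerouting through $I_i$ itself, using that all vertices of $I_i$ have the same neighbourhoods and the same colours on incident edges --- which is at least as useful as having the (missing) chord $xz$. The required colour-connecting paths are available throughout because $H$ is colour-connected by hypothesis (and, by Proposition~\ref{prop:extcc}, this is equivalent to $G$ being colour-connected).

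The step I expect to be the main obstacle is precisely this twin bookkeeping: verifying, in every branch of the case analysis of~\cite{balbuenaDMTCS21}, that M-closedness is either applied to endpoints provably in distinct classes or can be replaced by a reroute through a blow-up class. If some branch resists, a fallback is to abandon the verbatim approach and instead adapt the ``longest alternating cycle in extensions of complete graphs'' technique from~\cite{bangDM188}: work with a maximum-length alternating cycle $C$ in $H$, and show, using colour-connectivity together with the near-M-closedness established above, that whenever $H$ has an alternating cycle factor $C$ must already span $V(H)$ --- any vertex off $C$ could otherwise be absorbed by a local rerouting, using twins of that vertex (if any) or an M-closed chord.
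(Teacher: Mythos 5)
Your proposal follows essentially the same route as the paper: necessity is handled identically, and your plan of re-checking the proof of Theorem~\ref{thm:MclHC} from \cite{balbuenaDMTCS21}, using that an extension is M-closed except when the two endpoints of a monochromatic path of length two lie in the same blow-up class, is exactly the paper's strategy. The paper resolves the ``twin bookkeeping'' you flag as the main obstacle via Proposition~\ref{prop:similar}: a pair of similar vertices lying on two disjoint alternating cycles already lets one merge the cycles, so cross-cycle twins can be assumed away at the outset, and only three uses of intra-cycle chords in the lemmas of \cite{balbuenaDMTCS21} require separate (and routine) attention before the minimum-cycle-factor argument goes through verbatim.
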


The proof of Theorem \ref{thm:MclHC}  is based on the following Lemmas. For each we will argue why they can be extended to extensions of M-closed 2-edge-coloured graphs.  Before we list the lemmas, we recall the following easy fact about pairs of alternating cycles in extended 2-edge-coloured graphs.

\begin{prop}\label{prop:similar}
  Let $C_1=x_1x_2\ldots{}x_{2k-1}x_{2k}x_1$ and $C_2=y_1y_2\ldots{},y_{2r-1}y_{2r}$ be disjoint alternating cycles in a 2-edge-coloured graph $G$. If there exist indices $i\in [2k],j\in [2r]$ such that $x_i$ and $y_j$ are similar, then $G$ contains an alternating cycle $C$ with $V(C)=V(C_1)\cup V(C_2)$.
\end{prop}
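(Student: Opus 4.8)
The plan is to normalise the two cycles by a cyclic relabelling so that the similar vertices are the \emph{first} vertices $x_1$ and $y_1$, and then to exhibit an explicit alternating cycle on $V(C_1)\cup V(C_2)$ obtained by concatenating $C_1$ with $C_2$, glued together at $x_1$ and $y_1$ by two edges whose existence and colours are forced by similarity. Throughout I will use only that similar vertices have identical colour-$c$ neighbourhoods (for $c\in\{1,2\}$) apart from possibly each other.

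First I would set $a=\phi{}(x_1x_2)$; since $C_1$ is alternating and has even length $2k$, this forces $\phi{}(x_{2k}x_1)=3-a$. Because $x_1$ and $y_1$ are similar and $x_{2k}\in V(C_1)$ is distinct from $y_1\in V(C_2)$ (the two cycles are disjoint), the edge $x_{2k}x_1$ "transfers": $x_{2k}y_1$ is an edge of $G$ of colour $3-a$. Symmetrically, $x_1$ is joined to $y_2$ by an edge of colour $\phi{}(y_1y_2)$ and to $y_{2r}$ by an edge of colour $\phi{}(y_{2r}y_1)$, and these two colours are distinct because $C_2$ alternates; hence, after reversing the cyclic orientation of $C_2$ if necessary, I may assume $\phi{}(y_1y_2)=a$ and therefore $\phi{}(y_{2r}y_1)=3-a$, which yields an edge $y_{2r}x_1$ of colour $3-a$.

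The second step is just to verify that
\[
C:=x_1x_2x_3\cdots x_{2k}\,y_1y_2y_3\cdots y_{2r}\,x_1
\]
is an alternating cycle covering $V(C_1)\cup V(C_2)$. The subpath $x_1\cdots x_{2k}$ is an alternating subpath of $C_1$ with $2k-1$ edges, hence it begins and ends with colour $a$; the gluing edge $x_{2k}y_1$ has colour $3-a$ and so alternates with the last $x$-edge; the subpath $y_1\cdots y_{2r}$ is an alternating subpath of $C_2$ with $2r-1$ edges beginning with $\phi{}(y_1y_2)=a$ (which alternates with $x_{2k}y_1$) and ending with colour $a$; and the closing edge $y_{2r}x_1$ has colour $3-a$, which alternates both with the last $y$-edge and with the first edge $x_1x_2$, so the cycle closes properly. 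Every vertex of $C_1$ and of $C_2$ occurs exactly once in $C$.

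I do not expect a genuine obstacle: the argument is entirely a bookkeeping of colours at the four junctions of an explicitly prescribed cycle, and similarity is invoked only to produce the two gluing edges $x_{2k}y_1$ and $y_{2r}x_1$ with the required colour $3-a$. The only points needing a little care are this colour accounting at the junctions and the degenerate cases $k=1$ or $r=1$ (where $C_1$ or $C_2$ is a pair of parallel edges of opposite colours), for which the displayed description of $C$ is still literally valid, so no separate treatment is needed.
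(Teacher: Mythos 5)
Your proof is correct and is essentially the paper's own argument: after relabelling/reorienting so that the forward edges at the two similar vertices have the same colour, you transfer the two "incoming" edges $x_{2k}x_1$ and $y_{2r}y_1$ across the similar pair to obtain the gluing edges, and splice the two cycles exactly as in the paper's cycle $C_1[x_i,x_{i-1}]x_{i-1}y_jC_2[y_j,y_{j-1}]y_{j-1}x_i$. The colour bookkeeping at the four junctions matches the paper's, so nothing further is needed.
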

\begin{proof}Assume that  $x_i$ and $y_j$ are similar. By reversing the ordering of one of the cycles if necessary we can assume that $\phi(x_ix_{i+1})=\phi(y_jy_{j+1})$. Now the fact that $x_i$ and $y_j$ are similar implies that the edges $x_{i-1}y_j, y_{j-1}x_i$ exist and $\phi(x_{i-1}y_j)=\phi(y_{j-1}x_i)$. Hence $C_1[x_i,x_{i-1}]x_{i-1}y_jC_2[y_j,y_{j-1}]y_{j-1}x_i$ is the desired cycle.
\end{proof}

The first lemma below, which holds for general 2-edge-coloured graphs, is very simple and has been used in many papers.

\begin{lemma}\label{lem4cycle}
  Let $C_1=x_1x_2\ldots{}x_{2k-1}x_{2k}x_1$ and $C_2=y_1y_2\ldots{},y_{2r-1}y_{2r}$ be disjoint alternating cycles in a 2-edge-coloured graph $G$. If there exist indices $i\in [2k],j\in [2r]$ such that $G$ contains both  of the edges $x_iy_j$ and $x_{i+1}y_{j+1}$ and $\phi{}(x_iy_j)=\phi{}(x_ix_{i+1})=\phi{}(x_{i+1}y_{j+1})=\phi{}(y_jy_{j+1})$, then $G$ contains an alternating cycle $C$ with $V(C)=V(C_1)\cup V(C_2)$.
\end{lemma}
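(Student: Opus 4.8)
The plan is to exhibit the desired alternating cycle $C$ explicitly by rerouting $C_1$ and $C_2$ through the two connecting edges $x_iy_j$ and $x_{i+1}y_{j+1}$, exactly as in the standard ``crossing two cycles'' argument. First I would fix the indexing: by the hypothesis $\phi(x_iy_j)=\phi(x_ix_{i+1})=\phi(x_{i+1}y_{j+1})=\phi(y_jy_{j+1})$, call this common colour $c$. Along $C_1$, the edge at $x_i$ going to $x_{i-1}$ has colour $3-c$ (since $C_1$ is alternating and $x_ix_{i+1}$ has colour $c$), and likewise the edge at $x_{i+1}$ going to $x_{i+2}$ has colour $3-c$. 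The same holds on $C_2$: the edge $y_jy_{j-1}$ and the edge $y_{j+1}y_{j+2}$ both have colour $3-c$.

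Next I would write down the candidate closed walk. Delete the edges $x_ix_{i+1}$ and $y_jy_{j+1}$ and insert $x_iy_j$ and $x_{i+1}y_{j+1}$. Concretely, traverse $C_1$ from $x_{i+1}$ around to $x_i$ (this is the path $C_1[x_{i+1},x_i]=x_{i+1}x_{i+2}\ldots x_i$, using all vertices of $C_1$ and avoiding the edge $x_ix_{i+1}$), then take the edge $x_iy_j$ of colour $c$, then traverse $C_2$ from $y_j$ around to $y_{j+1}$ via $C_2[y_j,y_{j+1}]=y_jy_{j-1}\ldots y_{j+1}$ (using all vertices of $C_2$ and avoiding $y_jy_{j+1}$), and finally close up with the edge $y_{j+1}x_{i+1}$ of colour $c$. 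This visits every vertex of $V(C_1)\cup V(C_2)$ exactly once, so it is a cycle $C$ with the stated vertex set.

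The only thing left is to check that $C$ is alternating, which is the routine heart of the argument: at each of the four ``junction'' vertices $x_i, x_{i+1}, y_j, y_{j+1}$ the two incident edges of $C$ have opposite colours. At $x_i$ the two edges of $C$ are the edge $x_{i-1}x_i$ of $C_1$ (colour $3-c$) and the new edge $x_iy_j$ (colour $c$); at $x_{i+1}$ they are $x_{i+1}x_{i+2}$ of $C_1$ (colour $3-c$) and the new edge $x_{i+1}y_{j+1}$ (colour $c$); symmetrically at $y_j$ the edges are $y_jy_{j-1}$ (colour $3-c$) and $x_iy_j$ (colour $c$), and at $y_{j+1}$ they are $y_{j+1}y_{j+2}$ (colour $3-c$) and $x_{i+1}y_{j+1}$ (colour $c$). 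Everywhere else $C$ follows $C_1$ or $C_2$, which are alternating by assumption, so the colours continue to alternate. Hence $C$ is an alternating cycle on $V(C_1)\cup V(C_2)$, as required.

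I do not anticipate a genuine obstacle here: the statement is exactly the classical two-cycle patching lemma, and the proof is bookkeeping of colours at the four junctions. The one point to be slightly careful about is the degenerate case where $C_1$ or $C_2$ has length $4$ (the smallest alternating cycle), or where $i-1$, $i+2$ etc.\ wrap around modulo $2k$ or $2r$; in those cases the vertices named $x_{i-1}$ or $x_{i+2}$ may coincide with $x_{i+1}$ or $x_i$ respectively, but the colour computation above only uses the edges of the cycles, not the distinctness of those vertices, so the argument goes through unchanged (and in any case a length-$2$ situation cannot occur in a simple alternating cycle). This also parallels Proposition~\ref{prop:similar}: there one crosses via a pair of similarity edges of equal colour, here via a pair of genuinely present edges of equal colour, and the rerouting pattern is identical.
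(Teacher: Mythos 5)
Your proof is correct and is exactly the standard patching argument the paper has in mind: the paper leaves Lemma~\ref{lem4cycle} unproved as ``very simple,'' and your rerouting through $x_iy_j$ and $x_{i+1}y_{j+1}$ (deleting $x_ix_{i+1}$ and $y_jy_{j+1}$ and checking the colours at the four junction vertices) mirrors the construction the authors themselves use in the proof of Proposition~\ref{prop:similar}. Nothing is missing; the remark about wrap-around indices and short cycles is a fair observation and does not affect the argument.
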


\begin{lemma}\label{lem:altcls}\cite[Lemma 6]{balbuenaDMTCS21}
  Let  $G$ be an M-closed  2-edge-coloured graph and let $C_1=x_1x_2\ldots{}x_{2k-1}x_{2k}x_1$ and $C_2=y_1y_2\ldots{},y_{2r-1}y_{2r}$ be disjoint alternating cycles in  $G$. Suppose that the edge
  $x_iy_j$  exists in $G$ and $\phi{}(x_iy_j)=\phi{}(x_ix_{i+1})=\phi{}(y_jy_{j+1})=c$. Then either $G$ contains an alternating cycle $C$ with $V(C)=V(C_1)\cup V(C_2)$ or the edge $x_{i+1}y_{j+1}$ exists and $\phi{}(x_{i+1}y_{j+1})\neq c$.
\end{lemma}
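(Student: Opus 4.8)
The plan is to push the argument, as far as possible, onto Lemma~\ref{lem4cycle}. First I would dispose of the case in which the edge $x_{i+1}y_{j+1}$ is present: if $\phi(x_{i+1}y_{j+1})=c$ then the four edges $x_iy_j,x_ix_{i+1},x_{i+1}y_{j+1},y_jy_{j+1}$ all have colour $c$, so Lemma~\ref{lem4cycle} already produces an alternating cycle $C$ with $V(C)=V(C_1)\cup V(C_2)$ and we are done, while if $\phi(x_{i+1}y_{j+1})=3-c$ we are in the second alternative of the statement. So from now on I would assume $x_{i+1}y_{j+1}\notin E(G)$ and aim to build the merged cycle $C$. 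Now I would extract information from M-closedness near $x_iy_j$: the path $x_{i+1}x_iy_j$ is monochromatic of colour $c$, hence $x_{i+1}y_j\in E(G)$, and similarly $x_iy_jy_{j+1}$ forces $x_iy_{j+1}\in E(G)$. If $\phi(x_{i+1}y_j)=c$ then $y_{j+1}y_jx_{i+1}$ is a monochromatic path of colour $c$, so M-closedness gives $x_{i+1}y_{j+1}\in E(G)$, contradicting our assumption; hence $\phi(x_{i+1}y_j)=3-c$, and symmetrically $\phi(x_iy_{j+1})=3-c$. Thus everything is reduced to one configuration: $x_{i+1}y_{j+1}\notin E(G)$, $\phi(x_{i+1}y_j)=\phi(x_iy_{j+1})=3-c$, and we have the alternating path $x_{i+1}\,y_j\,x_i\,y_{j+1}$ of colours $3-c,c,3-c$.

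In this remaining configuration I would propagate the known crossing edges step by step along $C_1$ and along $C_2$: from a crossing edge $x_py_q$ whose colour coincides with that of a cycle-edge $x_px_{p+1}$ (respectively $y_qy_{q+1}$), M-closedness produces a new crossing edge incident with $x_{p+1}$ (respectively $y_{q+1}$), and one records how the colour of each newly produced edge is forced by the parity of the number of cycle-steps taken from $x_iy_j$. The aim is to show that this propagation cannot keep going around the finite cycles $C_1,C_2$ without eventually creating one of two favourable situations: either a pair of crossing edges $x_py_q,\,x_{p+1}y_{q'}$, where $y_{q'}$ is the successor of $y_q$ in $C_2$ or in its reversal, with $\phi(x_py_q)=\phi(x_px_{p+1})=\phi(x_{p+1}y_{q'})=\phi(y_qy_{q'})$, in which case Lemma~\ref{lem4cycle} merges $C_1$ and $C_2$; or a vertex of $C_1$ and a vertex of $C_2$ having identical coloured neighbourhoods, i.e.\ a similar pair, in which case Proposition~\ref{prop:similar} does the merging. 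Either way we obtain the desired alternating cycle $C$ with $V(C)=V(C_1)\cup V(C_2)$.

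The reduction in the first paragraph is routine, and the two "exit" configurations are handled immediately by Lemma~\ref{lem4cycle} and Proposition~\ref{prop:similar}. The main obstacle is the second paragraph: organising the colour bookkeeping for the propagation so that one can prove it always terminates in one of the two good configurations, rather than cycling forever through admissible colourings. In particular one has to control the parity of the distance travelled along each cycle (which dictates the colour of the relevant cycle-edge, hence which M-closedness triangle is available at the next step), and one has to treat the degenerate cases separately — for instance when $C_1$ or $C_2$ is a digon, so that "$x_px_{p+1}$" refers to one of two parallel edges, and the case where the propagation wraps all the way around and meets the starting edge $x_iy_j$, where consistency of the forced colours with $\phi(x_iy_j)=\phi(x_ix_{i+1})=\phi(y_jy_{j+1})=c$ must be verified.
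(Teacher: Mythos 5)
Your opening reduction is sound: if $x_{i+1}y_{j+1}\in E(G)$ then colour $c$ gives the merged cycle via Lemma~\ref{lem4cycle} and colour $3-c$ is the second alternative, and when $x_{i+1}y_{j+1}\notin E(G)$ your M-closedness deductions that $x_{i+1}y_j$ and $x_iy_{j+1}$ exist and both have colour $3-c$ are correct. But this is where the actual content of the lemma begins, and from here on you offer only a plan. The whole burden is to show that in this remaining configuration $C_1$ and $C_2$ can always be merged into one alternating cycle, and note that it is not immediate: for instance $C_1[x_{i+1},x_i]\,x_iy_{j+1}\,C_2[y_{j+1},y_j]\,y_jx_{i+1}$ fails to alternate at $x_i$, since $x_{i-1}x_i$ and $x_iy_{j+1}$ both have colour $3-c$. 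Your second paragraph describes the ``aim'' of a propagation of crossing edges and then names, as an admitted ``main obstacle'', precisely the termination and colour-bookkeeping argument that would constitute the proof. Nothing is verified: you do not show that the propagation must reach one of your two exit configurations (a Lemma~\ref{lem4cycle} pair or a similar pair), nor why it cannot wander indefinitely through admissible colourings. So the key step is missing.

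It is also worth knowing that the paper does not reprove this lemma at all; it is quoted from \cite{balbuenaDMTCS21} (Lemma 6 there), and the text only argues that the published proof carries over to extensions of M-closed graphs. That published proof is a substantial case analysis which, crucially, also uses chords \emph{inside} the cycles, namely edges of the form $x_{i-1}x_{i+1}$, $x_{i-2}x_{i+1}$ and $x_{i+1}x_{i+3}$ --- this is exactly why the present paper needs Proposition~\ref{prop:similar} to patch those three places when passing to extensions. A propagation confined to crossing edges between $C_1$ and $C_2$, as you propose, is therefore not the known argument, and you give no evidence that such a restricted scheme suffices. To complete your approach you would have to either carry out the full case analysis (including within-cycle chords supplied by M-closedness and the degenerate short-cycle cases you mention) or find a genuinely new invariant guaranteeing your propagation terminates in a good configuration; as written, the proof has a genuine gap at its core.
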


To see that Lemma \ref{lem:altcls} holds for extensions of M-closed  2-edge-coloured graphs we first observe that, by Proposition \ref{prop:similar}, we can assume there is no pair of similar vertices $x_a,y_b$. This implies that all the arguments in the proof of the lemma in \cite{balbuenaDMTCS21} that deal with possible edges between the two cycles carry over to extended M-closed 2-edge-coloured graphs. There are only three places where edges between non consecutive vertices of the same cycle are used in the argument. In one case this is an edge of the kind
$x_{i-1}x_{i+1}$ in another it is an edge of the kind $x_{i-2}x_{i+1}$ and in the final case it is the edge $x_{i+1}x_{i+3}$. In all three  cases it is possible that the edge is not in $G$, because $G$ is an extension of an M-closed 2-edge-coloured multigraph, but then Proposition \ref{prop:similar} and the colours of  edges already studied in the original proof in  \cite{balbuenaDMTCS21} easily leads to the desired conclusion that the edge $x_{i+1}y_{j+1}$ is in $G$ and either $G$ has a cycle $C$ with $V(C)=V(C_1)\cup V(C_2)$ or we have $\phi{}(x_{i+1}y_{j+1})\neq c$.

\begin{lemma}\cite[Corollary 7]{balbuenaDMTCS21}
  \label{lem:cor7}
  Let $G$ be an  M-closed  2-edge-coloured graph and let $C_1$ and $C_2$ be disjoint alternating cycles of $G$ such that there is at least one edge between $C_1$ and $C_2$. Then at least one of the following holds:
  \begin{itemize}
  \item[1.] $G$ contains an alternating cycle $C$ with $V(C)=V(C_1)\cup V(C_2)$.
  \item[2.] Every vertex of $C_1$ is adjacent to every vertex of $C_2$.
    \end{itemize}
  \end{lemma}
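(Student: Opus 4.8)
The plan is to prove Lemma~\ref{lem:cor7} by a direct case analysis, using Lemma~\ref{lem4cycle} and Lemma~\ref{lem:altcls} as the two workhorses, and exploiting the M-closed hypothesis to propagate edges around the cycles. So assume $C_1=x_1x_2\ldots x_{2k}x_1$ and $C_2=y_1y_2\ldots y_{2r}y_1$ are disjoint alternating cycles with at least one edge between them, say $x_iy_j\in E(G)$, and assume conclusion~1 fails, i.e. $G$ has no alternating cycle on $V(C_1)\cup V(C_2)$; we must show every vertex of $C_1$ is adjacent to every vertex of $C_2$. Since $x_i$ has edges of both colours on $C_1$, one of the two edges of $C_1$ at $x_i$, say $x_ix_{i+1}$, has the same colour as $x_iy_j$; similarly we may assume $\phi(x_iy_j)=\phi(y_jy_{j+1})=c$ after possibly renaming, or handle the mismatch separately. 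When $\phi(x_iy_j)$ equals the colour of $x_ix_{i+1}$ and of $y_jy_{j+1}$, Lemma~\ref{lem:altcls} forces (given conclusion~1 fails) the edge $x_{i+1}y_{j+1}$ to exist with colour $\neq c$; and applying the same lemma with the roles of the predecessors along the other direction of the cycles gives $x_{i-1}y_{j-1}\in E(G)$. The key point is that from each known edge between the cycles we can now march one step along \emph{both} cycles in either direction and produce a new edge; iterating, we discover edges $x_{i+t}y_{j+t}$ and $x_{i+t}y_{j-t}$ for all $t$, whence (since $2k$ and $2r$ are both even and the indices wrap around with controlled colours) every $x_a$ is joined to every $y_b$.

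The first concrete step is to reduce to the aligned situation: I would argue that if $x_iy_j$ is an edge, then by choosing the orientation of $C_1$ and of $C_2$ suitably we may assume $\phi(x_iy_j)=\phi(x_ix_{i+1})$, and then split on whether $\phi(y_jy_{j+1})$ equals $\phi(x_iy_j)$ or not. In the equal case, Lemma~\ref{lem:altcls} applies verbatim. In the unequal case $\phi(y_jy_{j-1})=\phi(x_iy_j)=c$, so we instead run Lemma~\ref{lem:altcls} with $y_j$'s predecessor direction, obtaining $x_{i+1}y_{j-1}$ with colour $\neq c$; either way we land on a configuration with a monochromatic ``corner'' to which the lemma (or Lemma~\ref{lem4cycle}) applies. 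The second step is the propagation engine: starting from one cross-edge and the failure of conclusion~1, show that for every edge $x_ay_b$ that we have found, the edges $x_{a+1}y_{b+1}$ and $x_{a-1}y_{b-1}$ (with appropriately determined colours) also exist — this is exactly Lemma~\ref{lem:altcls} applied at $(a,b)$ together with the colour bookkeeping that the new edge's colour is forced, so that the hypothesis of the lemma is again met at $(a+1,b+1)$. The third step is the counting/wrap-around argument: because $|V(C_1)|$ and $|V(C_2)|$ are even and we can also pivot (after obtaining enough edges, use M-closedness on a monochromatic path $y_{b-1}y_b x_a$ or $x_{a-1}x_a y_b$ to get ``diagonal'' edges $x_{a+1}y_{b-1}$ etc.), the set of pairs $(a,b)$ with $x_ay_b\in E(G)$ is closed under enough moves that it must be everything.

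I would present the colour bookkeeping as a small invariant: whenever $x_ay_b\in E(G)$ with $\phi(x_ay_b)=\phi(x_ax_{a+1})=\phi(y_by_{b+1})$, conclusion~1 fails, so $x_{a+1}y_{b+1}\in E(G)$ and $\phi(x_{a+1}y_{b+1})\neq\phi(x_ay_b)$; but then $\phi(x_{a+1}y_{b+1})=\phi(x_{a+1}x_{a+2})=\phi(y_{b+1}y_{b+2})$ (since the cycles alternate), so the invariant regenerates at $(a+1,b+1)$, and symmetrically at $(a-1,b-1)$. Running this both ways around the two cycles (of even lengths) yields a full ``diagonal'' of cross-edges; then the M-closed property applied to the length-2 monochromatic paths formed along the cycles fills in the remaining ``off-diagonal'' entries, after which every vertex of $C_1$ is adjacent to every vertex of $C_2$, i.e. conclusion~2 holds.

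The main obstacle I anticipate is the bookkeeping when the starting cross-edge $x_iy_j$ does \emph{not} fit the aligned hypothesis of Lemma~\ref{lem:altcls} — i.e. when after orienting $C_1$ so that $\phi(x_iy_j)=\phi(x_ix_{i+1})$ we still have $\phi(y_jy_{j+1})\neq\phi(x_iy_j)$; there one must carefully use the \emph{other} endpoint direction and possibly Lemma~\ref{lem4cycle} rather than Lemma~\ref{lem:altcls}, and check that the resulting edge still feeds the propagation. The other delicate point is ensuring the diagonal actually sweeps out all pairs rather than, say, only half of them due to a parity obstruction; this is where I would invoke M-closedness a second time, on monochromatic paths $x_{a-1}x_ax_{a+1}$ within $C_1$ combined with an existing cross-edge at $x_a$, to jump between the two ``parity classes'' of diagonals. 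I expect the rest — each individual application of Lemma~\ref{lem4cycle} or Lemma~\ref{lem:altcls} — to be routine given those two lemmas.
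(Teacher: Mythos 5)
You should first note that the paper does not actually prove this statement: it is quoted verbatim as Corollary~7 of \cite{balbuenaDMTCS21}, and the only discussion in the paper is the remark that, besides Lemma~\ref{lem:altcls}, the proof in \cite{balbuenaDMTCS21} uses only pairs of edges between the two cycles, so that by Proposition~\ref{prop:similar} it extends to extensions of M-closed graphs. Your overall plan is in the same spirit as that cited argument (assume no merged cycle exists, propagate cross-edges with Lemma~\ref{lem:altcls}, fill in with M-closedness), and several of your steps are sound: any cross-edge can be aligned by choosing traversal directions, and your colour invariant does regenerate, so the full diagonal orbit $\{x_{i+t}y_{j+t}\}$ of cross-edges is correctly established.

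However, the final covering step has a genuine gap. From the diagonal, one round of M-closedness (cycle edge plus cross-edge of the same colour at a common vertex) yields only the two neighbouring shifted diagonals; writing $d=\gcd(2k,2r)$, this covers only the residue classes $r_0,r_0\pm 1$ of $a-b \pmod d$, which is all pairs precisely when $d=2$. Already for two alternating $4$-cycles ($d=4$) an entire class of pairs $(a,b)$ is untouched, and since the colours of the newly found edges are not determined, you cannot simply rerun the propagation in a controlled direction. Your proposed remedy for this parity obstruction --- M-closedness applied to the path $x_{a-1}x_ax_{a+1}$ --- is vacuous, because consecutive edges of an alternating cycle have different colours, so that path is never monochromatic. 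Likewise, the claims in your first paragraph that the antidiagonal edges $x_{i+t}y_{j-t}$ appear, and that applying Lemma~\ref{lem:altcls} ``with the predecessors'' yields $x_{i-1}y_{j-1}$, are unjustified: in the aligned situation the backward cycle edges carry the opposite colour to the cross-edge, so the lemma's hypothesis fails in that direction (the backward pairs are only obtained because the forward orbit wraps around). To close the argument you must pin down the colours of the newly created cross-edges --- e.g.\ show via Lemma~\ref{lem4cycle} and Lemma~\ref{lem:altcls} that a ``wrong'' colour would produce the merged cycle --- and iterate the propagation from them; this colour bookkeeping is exactly the structural analysis that leads to Lemma~\ref{lem:dom}, and it is the substantive part of the proof of Corollary~7 in \cite{balbuenaDMTCS21} that your sketch leaves out.
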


  Besides applying Lemma \ref{lem:altcls} the proof of Lemma \ref{lem:cor7} in \cite{balbuenaDMTCS21} uses only arguments based on pairs of edges between the two cycles or an edge of one cycle and an edge between the cycles, so by Proposition \ref{prop:similar}, the Lemma also holds for extensions of M-closed 2-edge-coloured graphs.\\

  The following lemma, which is implicitly stated and proved on pages 8-10 in \cite{balbuenaDMTCS21},
  is the key to the proof of Theorem \ref{thm:MclHC}. We state it for extended M-closed 2-edge-coloured graphs as the statement is slighly different.  The only difference is that in (ii) and (iii) there may be pairs of similar vertices in $\{x_1,x_3,\ldots, x_{2p-1}\}$ and also in $\{x_2,x_4,\ldots, x_{2p}\}$ so the subgraph
  $G[\{x_1,x_3,\ldots, x_{2p-1}\}]$ as well as the subgraph $G[\{x_2,x_4,\ldots, x_{2p}\}]$ does not have to be complete as it is the case for the corresponding Lemma for  M-closed graphs. The proof of the lemma
  is the same as for M-closed 2-edge-coloured graphs.
  
  \begin{lemma}
    \label{lem:dom}
  Let $C_1,C_2$ be disjoint alternating cycles in an extended M-closed 2-edge-coloured graph
  $G$ such that there is at least one edge between $C_1$ and $C_2$. If $D$ has no alternating cycle $C$ with $V(C)=V(C_1)\cup V(C_2)$, then every vertex of  $C_1$ is adjacent to every vertex of  $C_2$ and for some $i\in [2]$
  the  vertices of  $C_i$ can be labelled such that $C_i=x_1x_2\ldots{}x_{2p}x_1$ and the following holds.
  \begin{enumerate}
  \item[i)] all edges between $\{x_1,x_3,\ldots, x_{2p-1}\}$ and $V(C_2)$ have the same colour $c=\phi{}(x_1x_2)$ and all the edges between $\{x_2,x_4,\ldots, x_{2p}\}$ and $V(C_2)$ have colour $c'\neq c$.
  \item[(ii)] Every edge between two vertices $x_{2i+1},x_{2j+1}$ has colour $c$.
    \item[(iii)] Every edge between two vertices $x_{2a},x_{2b}$ has colour $c'$.
    
    \end{enumerate}
    We say that $C_1$ {\bf $\mathbf{c}$-dominates} $C_2$ and denote it by $C_1{\stackrel{c}{\rightarrow}}C_2$. 
    \end{lemma}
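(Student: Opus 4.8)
The plan is to reduce the statement for extended M-closed graphs to the known statement for genuine M-closed graphs, and then to handle the small discrepancies that arise because the subgraphs $G[\{x_1,x_3,\ldots,x_{2p-1}\}]$ and $G[\{x_2,x_4,\ldots,x_{2p}\}]$ need not be complete. First I would invoke Proposition \ref{prop:similar}: since $G$ has no alternating cycle $C$ with $V(C)=V(C_1)\cup V(C_2)$, there is no pair of similar vertices $x_a\in V(C_1)$, $y_b\in V(C_2)$. Next, by Lemma \ref{lem:cor7} (which, as noted in the excerpt, extends to extensions of M-closed graphs), since there is at least one edge between $C_1$ and $C_2$ and no merging cycle exists, we conclude that every vertex of $C_1$ is adjacent to every vertex of $C_2$; so from now on the bipartite graph between $V(C_1)$ and $V(C_2)$ is complete, exactly as in the M-closed case.

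The heart of the matter is then to run the argument on pages 8--10 of \cite{balbuenaDMTCS21} verbatim. That argument only ever uses: (a) the colours of edges between the two cycles and within a single cycle; (b) Lemma \ref{lem:altcls} (now available for extensions of M-closed graphs by the discussion preceding this lemma); and (c) Lemma \ref{lem4cycle}, which holds for all 2-edge-coloured graphs. With the bipartite part between the cycles complete, every step that derives the colour pattern in (i) — all edges from the ``odd'' class of one cycle to the other cycle have colour $c=\phi(x_1x_2)$, all from the ``even'' class have colour $c'\neq c$ — goes through unchanged. For (ii) and (iii), the original proof uses, at the points where it invokes M-closedness, an edge $x_{2i+1}x_{2j+1}$ or $x_{2a}x_{2b}$ inside one side of $C_i$; in an extension this edge might be absent because the two endpoints came from the same independent set of the underlying M-closed graph, i.e.\ they are similar vertices of $C_i$. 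But in that case I would argue directly: if $x_{2i+1}$ and $x_{2j+1}$ are similar then the colour constraints (i) already established between these vertices and $V(C_2)$ are automatically consistent, so the colour pattern claimed in (ii) is vacuously respected (there is simply no edge to colour, and the consecutive-vertex colour conditions defining a $c$-domination only constrain edges that actually exist). Symmetrically for (iii). Thus the conclusion is stated for the edges that are present, and similar pairs create no obstruction.

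The main obstacle I anticipate is bookkeeping rather than a genuine mathematical gap: one must check that each of the (finitely many, roughly three) places in the proof of \cite[pages 8--10]{balbuenaDMTCS21} where an edge within one colour-class of a cycle is invoked can be bypassed when that edge is missing. In each such place the two candidate endpoints have identical colours to the entire other cycle by part (i), hence are similar; by Proposition \ref{prop:similar} this would yield a merging cycle unless — and this is exactly the situation we are in — they are similar but we have *not yet* derived enough structure to contradict non-mergeability, which is impossible since similarity alone already forces a merging cycle. So the absent edge simply cannot occur as an obstacle: whenever the underlying M-closed graph would have forced the edge, either the extension has it too, or the relevant vertices are similar and Proposition \ref{prop:similar} closes the case. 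Hence the lemma follows with the stated weakening of the completeness of $G[\{x_1,x_3,\ldots\}]$ and $G[\{x_2,x_4,\ldots\}]$, and the proof is otherwise identical to that in \cite{balbuenaDMTCS21}.
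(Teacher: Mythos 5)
Your overall route is the same as the paper's: the paper also proves Lemma \ref{lem:dom} by rerunning the pages 8--10 argument of \cite{balbuenaDMTCS21}, after using Proposition \ref{prop:similar} to exclude similar pairs with one vertex on each cycle, invoking the extended Lemmas \ref{lem:altcls} and \ref{lem:cor7}, and weakening (ii) and (iii) so that they only constrain edges that actually exist. Your first two paragraphs are exactly this argument.

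One claim in your final paragraph is wrong, however, and you should not lean on it: you assert that when the two candidate endpoints $x_{2i+1},x_{2j+1}$ (or $x_{2a},x_{2b}$) of a missing within-class edge are similar, Proposition \ref{prop:similar} ``closes the case'' by producing a merging cycle, so the absent edge ``cannot occur.'' Proposition \ref{prop:similar} applies only to a similar pair with one vertex on $C_1$ and one on $C_2$; two similar vertices lying on the \emph{same} cycle do not yield an alternating cycle on $V(C_1)\cup V(C_2)$ and cannot be excluded --- their possible presence is precisely why the paper drops the completeness of $G[\{x_1,x_3,\ldots,x_{2p-1}\}]$ and $G[\{x_2,x_4,\ldots,x_{2p}\}]$ from the conclusion. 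If your claim were correct it would restore that completeness, contradicting the stated weakening; also, having identical edge colours to the other cycle does not by itself make two vertices similar. The correct resolution is the one you already give in your second paragraph: a missing within-class edge imposes no constraint in (ii)/(iii), and where the original proof uses such an edge, the similar endpoints are interchangeable, so the argument for (i) goes through unchanged --- which is what the paper means when it says the proof is the same as in the M-closed case.
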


It is easy to check that if $C_1{\stackrel{c}{\rightarrow}}C_2$, then $G[V(C_1)\cup{}V(C_2)]$ is not trail-colour-connected and hence also not colour-connected.

  The proof of the non-trivial direction in Theorem \ref{thm:extMclHC} now proceeds as follows:
  Consider a cycle factor $C_1,C_2,\ldots{},C_k$ with the minimum number of cycles. If $k=1$ the proof is complete and otherwise, by considering only edges between cycles, one obtains the contradiction that $G$ is not colour-connected. This proof carries over verbatim to the case of extended M-closed 2-edge-coloured graphs.}\\

The proofs in \cite{balbuenaDMTCS21} are algorithmic so, from the arguments above we get the following.

\begin{corollary}
  The exists a polynomial algorithm ${\cal A}$ which, given  a graph $G$ which is an extension of an M-closed 2-edge-coloured graph such that $G$ is colour-connected and has a cycle factor, produces an alternating hamiltonian cycle of $G$.
\end{corollary}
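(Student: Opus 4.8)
The plan is to observe that every ingredient of the proof of Theorem~\ref{thm:extMclHC} sketched above is effective, and to assemble these ingredients into the claimed algorithm ${\cal A}$ in three phases: build an initial alternating cycle factor, then repeatedly merge two of its cycles into one, until a single cycle remains.

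For the first phase, on input $G$ I would compute an arbitrary alternating cycle factor. This is polynomial: a spanning collection of vertex-disjoint alternating cycles is exactly a spanning subgraph in which every vertex is incident with precisely one edge of colour $1$ and precisely one edge of colour $2$, and such a subgraph can be found whenever one exists (which it does, by hypothesis) by a reduction to perfect matching in an auxiliary graph, in the same spirit as the construction in the proof of Lemma~\ref{lem:findEF}. Let $C_1,\dots,C_k$ be the cycles produced.

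For the second phase I would run the loop: while $k\ge 2$, search for a pair of cycles $C_i,C_j$ that can be \emph{merged}, that is, such that $G[V(C_i)\cup V(C_j)]$ contains an alternating cycle $C$ on $V(C_i)\cup V(C_j)$, together with an explicit such $C$. Here the constructive content of Proposition~\ref{prop:similar} and of Lemmas~\ref{lem4cycle}, \ref{lem:altcls} and \ref{lem:cor7} is precisely what is needed: each of their proofs, given the relevant configuration of edges and colours between (and inside) the two cycles, writes the merged cycle $C$ down explicitly as a concatenation of arcs of $C_i$ and $C_j$. Testing for such a configuration and building $C$ when it is present costs only $O(n^2)$ colour look-ups and an $O(n)$-time concatenation, so one iteration is polynomial; after a merge I replace $C_i,C_j$ by $C$, decreasing $k$ by one, and after at most $n$ iterations $k=1$ and the algorithm outputs the alternating hamiltonian cycle.

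The step that needs care — and which I regard as the main obstacle — is guaranteeing that, as long as $k\ge 2$, a mergeable pair actually exists and is detected by the search; this is exactly where colour-connectivity of $G$ enters. If two cycles $C_i,C_j$ have an edge between them but cannot be merged, then by Lemma~\ref{lem:cor7} and Lemma~\ref{lem:dom} one of them $c$-dominates the other, $C_i\stackrel{c}{\rightarrow}C_j$ for some colour $c$; and by the remark following Lemma~\ref{lem:dom} this already makes $G[V(C_i)\cup V(C_j)]$ fail colour-connectivity. One then has to reproduce, effectively, the terminal argument of the proof of Theorem~\ref{thm:extMclHC}: either some pair of cycles still admits an application of Lemmas~\ref{lem4cycle}--\ref{lem:cor7} yielding a merge, or the $c$-domination relations together with the non-adjacencies among the $C_i$ combine into an explicit partition of $V(G)$ witnessing that $G$ is not colour-connected. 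Since $G$ is colour-connected by assumption, the second alternative never occurs, so the search always succeeds; and since this whole case analysis is finite and manipulates only the current cycles, their pairwise adjacency pattern, and a bounded number of edge colours, it runs in polynomial time. Combining the three phases yields the polynomial algorithm ${\cal A}$.
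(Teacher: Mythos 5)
Your proposal is correct and follows essentially the same route as the paper, whose entire proof is the observation that the lemmas and the proof of Theorem~\ref{thm:extMclHC} are algorithmic: you simply spell out the resulting procedure (compute an alternating cycle factor by a matching reduction, then repeatedly apply the constructive merge-or-dominate dichotomy of Lemmas~\ref{lem:altcls}--\ref{lem:dom}, colour-connectivity guaranteeing that a merge is always available while $k\ge 2$). The only genuine addition is making explicit the polynomial computation of the initial cycle factor, which the paper leaves implicit.
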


\section{Supereulerian  extensions of M-closed 2-edge-coloured  graphs}

\jbjb{Armed with Theorem \ref{thm:extMclHC} we are now ready to characterize supereulerian extensions of M-closed 2-edge-coloured graphs. Note that, by the example in Figure \ref{fig:notcc}, a supereulerian M-closed 2-edge-coloured graph does not have to be colour-connected, but it must be trail-colour-connected. We first consider the case of colour-connected graphs.}

\begin{lemma}
  \label{lem:exMclccE}
  Let $G$ be an extension of an M-closed 2-edge coloured multigraph. If $G$ is colour-connected and has an eulerian factor, then $G$ is supereulerian.
\end{lemma}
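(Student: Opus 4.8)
\medskip

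The plan is to reduce the problem to Theorem~\ref{thm:extMclHC} by passing from an eulerian factor of $G$ to a suitable extension $G'$ of an M-closed 2-edge-coloured graph in which a spanning alternating closed trail of $G$ corresponds to an alternating hamiltonian cycle. First I would fix an eulerian factor $G_1=(V_1,E_1),\ldots,G_k=(V_k,E_k)$ of $G$, where $G$ is an extension $H[I_{p_1},\ldots,I_{p_n}]$ of an M-closed 2-edge-coloured graph $H$. Each $G_i$ is supereulerian, so it has a spanning alternating closed trail $T_i$; let $f_i(v)\ge 2$ be the (even) number of times $T_i$ passes through $v$, i.e. half the degree of $v$ in the trail-subgraph. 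The idea is to ``blow up'' each vertex $v\in V_i$ into $f_i(v)$ copies inside $G$ itself: this is again an extension of $H$ (composition of extensions is an extension, since we replace an independent set by a larger independent set and keep the same adjacency pattern and colours), so Theorem~\ref{thm:extMclHC} applies to it.

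\medskip

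Next I would verify the two hypotheses of Theorem~\ref{thm:extMclHC} for this new graph $G'$. Colour-connectivity of $G'$ follows from colour-connectivity of $G$ together with Proposition~\ref{prop:extcc} (or directly Proposition~\ref{prop:Tccext} if one only wants trail-colour-connectivity, but here $G$ is colour-connected so $G'$ is too, being an extension of the same M-closed graph $H$). For the alternating cycle factor: each trail $T_i$, viewed in the blown-up graph $G'$ where the $q$-th visit of $v$ uses a distinct copy $v^{(q)}$, becomes a collection of alternating cycles covering $V(G_i')$ — indeed an eulerian trail through a vertex of degree $2f_i(v)$ splits into $f_i(v)$ ``passages'', each assigned to one copy, and since consecutive edges of $T_i$ alternate in colour and the first and last edges differ in colour, each closed piece we extract is alternating. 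Taking the union over $i$ gives an alternating cycle factor of $G'$. Hence $G'$ has an alternating hamiltonian cycle $C$ by Theorem~\ref{thm:extMclHC}.

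\medskip

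Finally I would contract back: replacing each copy $v^{(q)}$ of $v$ by $v$ turns the hamiltonian cycle $C$ of $G'$ into a closed alternating walk $W$ in $G$ that visits every vertex of $G$. I must check that $W$ uses no edge of $G$ twice. This is where a little care is needed: an edge $xy$ of $G$ between original vertices $x,y$ corresponds in $G'$ to the complete bipartite graph between the copies of $x$ and of $y$, all edges having the colour $\phi(xy)$; so distinct edges of $C$ could a priori map to the same edge $xy$ of $G$. To rule this out one should instead blow up $G$ more carefully — replace the \emph{edge-disjoint} trail $T_i$ itself by a cycle structure and only blow up vertices, keeping track that the cycles use disjoint edge-sets, which they do because $\bigcup_i E_i$ comes from edge-disjoint trails. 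Concretely, the cleanest route is: do not invoke Theorem~\ref{thm:extMclHC} on an abstract blow-up, but observe that the blow-up graph $G'$ can be taken so that its edge set is in bijection with $\bigcup_i E_i$ plus the cycle-closing edges, and the hamiltonian cycle $C$, being a subgraph of $G'$, projects to a subgraph of $G$ using each original edge at most once; then $W$ is a spanning alternating closed trail of $G$, so $G$ is supereulerian.

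\medskip

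The main obstacle I anticipate is precisely the last step: ensuring that the projection of the hamiltonian cycle of the blow-up does not reuse an edge of $G$. The safe way to handle it is to build the blow-up $G'$ so that each vertex $v\in V_i$ is split into exactly $\deg_{T_i}(v)/2$ copies and to route $T_i$ through these copies; then the alternating cycle factor one feeds into Theorem~\ref{thm:extMclHC} already lives on edge-disjoint edges, and since a hamiltonian cycle is in particular a subgraph, it inherits the edge-disjointness after projection. Everything else — that composition of extensions is an extension, that an eulerian subgraph decomposes into alternating cycles after vertex-splitting, and that colour-connectivity is preserved — is routine given the results already in the excerpt.
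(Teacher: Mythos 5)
Your main line is exactly the paper's proof: take an eulerian factor $G_1,\ldots,G_k$ with spanning closed alternating trails $T_i$, blow each vertex $v$ up into as many copies as the number of times its trail visits it (a minor slip: this number is at least $1$ and need not be even or at least $2$ -- it is \emph{half} of an even degree), observe that the resulting graph is again an extension of the same M-closed graph, that it is colour-connected by Proposition~\ref{prop:extcc}, and that the trails become an alternating cycle factor; then Theorem~\ref{thm:extMclHC} yields an alternating hamiltonian cycle, which is contracted back to a spanning closed alternating trail of $G$. Up to this point you and the paper agree step for step; the paper performs the contraction directly, treating the correspondence between alternating cycles of the extension and closed alternating trails of $G$ as immediate, and adds nothing further.

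Where your proposal goes astray is the final ``careful blow-up'' you introduce to guarantee that the projection of the hamiltonian cycle repeats no edge of $G$. The concern is worth a remark, but the remedy you sketch does not work. If the edge set of the blow-up is shrunk so as to be ``in bijection with $\bigcup_i E_i$ plus the cycle-closing edges'', the resulting graph is no longer an extension of an M-closed graph (an extension must contain an edge between \emph{every} pair of copies of adjacent vertices), and it contains no edges between different $G_i$'s at all, while any hamiltonian cycle must use such edges when $k\geq 2$; hence Theorem~\ref{thm:extMclHC} cannot be invoked on that graph, and the whole reduction collapses. On the genuine extension, on the other hand, the assertion that the hamiltonian cycle, ``being a subgraph of $G'$, projects to a subgraph of $G$ using each original edge at most once'' is unjustified: all $h(u)h(v)$ copies of an edge $uv$ are present, and a hamiltonian cycle is free to use two vertex-disjoint copies of the same original edge, which collapse to a repeated edge under contraction. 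So either contract the hamiltonian cycle of the full extension directly, as the paper does, or, if you insist on tracking multiplicities, you need a different argument (for instance a modification of the cycle or of the contracted walk), not a restricted blow-up.
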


\begin{proof}
  Let $G$ be an
     extension of an M-closed 2-edge-coloured graph which is colour-connected and let ${\cal G}=G_1,G_2,\ldots{},G_k$, $k\geq 1$ be an eulerian factor of $G$ which is chosen such that $k$ is minimum.
    If $k=1$ there is nothing to prove so suppose $k\geq 2$. Let $T_i$ be a spanning closed trail in $G_i$ for $i\in [k]$. Let $h(v)$ be the number of times the vertex $v\in V$ occurs in the spanning closed trail $T_i$ that it belongs to. Since the $G_i$'s are disjoint and spanning, each $v$ occurs in exactly one $T_i$. Now consider the 2-edge-coloured graph $H=G[I_{h(v_1)},\ldots{},I_{h(v_n)}]$ that we obtain by replacing each vertex $v_i$ by an independent set of size $h(v_i)$. Then $H$ is an extension of $G$ and as $G$ is an extension of an M-closed 2-edge-coloured graph so is $H$.
    Observe that for each $i\in [k]$ the   closed alternating trail $T_i$ 
 corresponds to an alternating cycle $C_i$ in $H$ and vice versa (just replace each occurrence of a vertex from $I_{h_i(v)}$ in the cycle by the vertex $v$). 
% Recall that the only nonadjacent vertices  in $H$ are vertices that are copies of a repeated  vertex of one of the $T_i$.
By Proposition \ref{prop:extcc}, $H$ is colour-connected and since $C_1,C_2,\ldots{},C_k$ form an alternating cycle factor of $H$, it follows from Theorem \ref{thm:extMclHC} that $H$  has an alternating hamiltonian cycle $C$. By contracting each of the sets  $I_{h(v_i)}$ into the vertex $v_i$  we convert $C$ into the desired spanning eulerian subgraph of $G$.
\end{proof}

\begin{lemma}
  \label{lem:2trails}
  Let $G$ be an extension of an M-closed 2-edge-coloured graph and let  $G_1,G_2$ be an eulerian factor of $G$. If $G$ is connected, then $G$ is supereulerian unless the following holds for some $i\in [2]$, every spanning closed alternating trail $T_i$ of $G_i$ and every closed alternating trail $T_{3-i}$ of $G_{3-i}$:
  \begin{itemize}
    \item[(1)] Every vertex of $T_i$ is adjacent to every vertex of $T_{3-i}$.
    \item[(2)] the vertices on $T_i$ alternate between having only edges of colour $c$ to $V(T_{3-i})$ (we call them {\bf $c$-vertices}) and having only edges of colour $c'$ to $V(T_{3-i})$ (we call them {\bf $c'$-vertices}). In particular $T_i$ contains no closed subtrail of odd length.
      \item[(3)] There is no edge of colour $c'$ between two $c$-vertices of $T_i$ and no edge of colour $c$ between two $c'$-vertices of $T_i$.
    \end{itemize}
    \noindent{}In particular, if (1)-(3) hold, then  $G$ is not trail-colour-connected.
\end{lemma}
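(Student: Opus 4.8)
The plan is to follow the same two-cycle merging strategy used for Theorem \ref{thm:extMclHC}, but applied to the eulerian trails $T_1, T_2$ rather than to a general cycle factor. First I would pass to an extension: let $h(v)$ be the number of times $v$ occurs on the trail $T_i$ it belongs to, and form $H = G[I_{h(v_1)},\ldots,I_{h(v_n)}]$, so that $T_1,T_2$ become disjoint alternating cycles $C_1, C_2$ in $H$, and $H$ is again an extension of an M-closed 2-edge-coloured graph. Since $G$ is connected and $V(C_1)\cup V(C_2) = V(H)$, there is at least one edge between $C_1$ and $C_2$ in $H$. Now apply Lemma \ref{lem:cor7} (in its extended form): either $H$ contains an alternating cycle $C$ with $V(C) = V(C_1)\cup V(C_2) = V(H)$, or every vertex of $C_1$ is adjacent to every vertex of $C_2$.

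In the first case, contracting each $I_{h(v_i)}$ back to $v_i$ turns the alternating Hamiltonian cycle $C$ of $H$ into a spanning closed alternating trail of $G$, so $G$ is supereulerian and we are done. In the second case, I would invoke Lemma \ref{lem:dom}: since $H$ has no alternating cycle on $V(C_1)\cup V(C_2)$, some $C_i$ $c$-dominates $C_{3-i}$, which gives exactly the structural conclusions (i), (ii), (iii) of that lemma. Translating back to $G$ by the contraction: the vertices of $C_i$ project onto the vertices of $T_i$ (a $c$-vertex of $C_i$ and a $c'$-vertex of $C_i$ cannot project to the same vertex of $G$, since all copies of a vertex $v$ in an independent set $I_{h(v)}$ send edges of the same colours to the outside), so $T_i$'s vertices split into $c$-vertices and $c'$-vertices exactly as in (2), conditions (1) and (3) are the images of Lemma \ref{lem:dom}(i)--(iii), and the "alternates" claim together with the absence of an odd closed subtrail follows because a closed subtrail of odd length would force two consecutive $c$-vertices or two consecutive $c'$-vertices on $T_i$, contradicting the alternation forced by (1)--(3). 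I should also note that the conclusion must be stated for \emph{every} choice of spanning closed alternating trail $T_i$ of $G_i$ and every closed alternating trail $T_{3-i}$ of $G_{3-i}$; to get this uniformity I would observe that (1)--(3) are really properties of the adjacency and colouring between $V(G_i)$ and $V(G_{3-i})$ together with the colouring inside $G_i$, so if they fail for some choice of trails, then running the above argument with that choice produces a spanning closed alternating trail of $G$.

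For the final sentence: if (1)--(3) hold for some $i$, then the bipartition of $V(T_i)$ into $c$-vertices and $c'$-vertices behaves like a colour-connectivity obstruction. Concretely, pick two $c$-vertices $u, w$ of $T_i$ (if $|V(T_i)| \geq 2$ there are at least two vertices of one type; the degenerate small cases can be handled directly, and note each $G_i$ is supereulerian hence has at least, say, $4$ vertices or is otherwise easily dispatched). Every edge incident with a $c$-vertex is either an edge of colour $c$ to $V(T_{3-i})$, or an edge inside $V(T_i)$; by (3) the latter has colour $c$ if the other endpoint is a $c'$-vertex. Tracking the parity of colours along any alternating trail from $u$, one sees that an alternating $(u,w)$-trail must start and end with the same colour (it always arrives at a $c$-vertex "in phase"), so no alternating $(u,w)$-trail starting with colour $c'$ and ending with colour $c$ exists, hence by Lemma \ref{lem:ccsuffTRAIL}, $G$ is not trail-colour-connected.

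The main obstacle I anticipate is the bookkeeping in the second paragraph: making sure the $c$-vertex/$c'$-vertex dichotomy from Lemma \ref{lem:dom} on the cycle $C_i$ in $H$ descends cleanly to a well-defined dichotomy on the trail $T_i$ in $G$, i.e.\ that no vertex of $G$ simultaneously receives a $c$-copy and a $c'$-copy in $C_i$ — this needs the observation that all vertices of $I_{h(v)}$ have identical colour-patterns to the rest of $H$ — and secondly arguing the "for every $T_i, T_{3-i}$" uniformity rather than just "for some". The parity argument for non-trail-colour-connectivity is the kind of thing that looks immediate but requires one to be careful that an alternating trail can pass through a $c'$-vertex and come back, so I would phrase it via an invariant (the colour of the next edge as a function of the type of the current vertex) rather than by case analysis on trail length.
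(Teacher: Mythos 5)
The main body of your argument is essentially the paper's proof: pass to the extension $H$ in which $T_1,T_2$ become an alternating cycle factor $C_1,C_2$, use connectivity of $G$ to get an edge between the cycles, and then the dichotomy ``alternating cycle on $V(C_1)\cup V(C_2)$ or $C_i\stackrel{c}{\rightarrow}C_{3-i}$'' (you invoke Lemma~\ref{lem:cor7}/Lemma~\ref{lem:dom} directly where the paper routes through colour-connectivity and Theorem~\ref{thm:extMclHC}; this is only a cosmetic difference, since Lemma~\ref{lem:dom} is the engine in both cases), and finally project the domination structure back to $G$. That part, including the remark that all copies of a vertex of $G$ have identical colour patterns in $H$, is sound.

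The final paragraph, however, has a genuine gap. First, your reading of (3) is wrong: (3) only forbids a $c'$-edge between two $c$-vertices and a $c$-edge between two $c'$-vertices; it says nothing about edges between a $c$-vertex and a $c'$-vertex of $T_i$ (these occur with both colours, e.g.\ along $T_i$ itself), nor about edges inside $V(T_{3-i})$. Consequently your invariant is false: between two $c$-vertices $u,w$ a trail $u\,b\,w$ through a $c'$-vertex $b$ can perfectly well start with $c'$ and end with $c$, and more generally between two $c$-vertices trails with every starting colour may exist, so this is the wrong pair of vertices to look at. Second, even if your claim ``every alternating $(u,w)$-trail starts and ends with the same colour'' were true, it would not contradict Lemma~\ref{lem:ccsuffTRAIL}, which only requires, for each ordered pair, a trail with each \emph{starting} colour; indeed a trail starting and ending with $c$ together with one starting and ending with $c'$ glue into an alternating closed walk, so such a restriction is compatible with trail-colour-connectivity. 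The correct obstruction (which is what the paper uses, via the remark after Lemma~\ref{lem:dom} and Proposition~\ref{prop:Tccext}) is between a $c$-vertex $v$ of $T_i$ and any vertex of $T_{3-i}$: by (1)--(3), every $c'$-edge at a $c$-vertex ends at a $c'$-vertex of $T_i$ and every $c$-edge at a $c'$-vertex ends at a $c$-vertex of $T_i$, so an alternating trail leaving $v$ on colour $c'$ oscillates between the $c$- and $c'$-vertices of $T_i$ and can never reach $V(T_{3-i})$; the absence of \emph{any} trail from $v$ to $V(T_{3-i})$ starting with colour $c'$ then violates Lemma~\ref{lem:ccsuffTRAIL}.
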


\begin{proof}
  Let $T_i$ be a spanning closed alternating trail in $G_i$, $i\in [2]$. Let $H$ be the extension of $G$ that we obtain by the procedure used in the proof of Lemma \ref{lem:exMclccE} and let $C_1,C_2$ be the alternating cycles in $H$ that correspond to $T_1$ and $T_2$, respectively. If $H$ is colour-connected, then it follows from Theorem  \ref{thm:extMclHC} that $H$  has an alternating hamiltonian cycle $C$, implying, as in the proof above, that $G$ is supereulerian. Thus we may assume that $H$ and hence also $G$  is not colour-connected.
  Now it follows from the discussion of the proof of Theorem \ref{thm:extMclHC}
  that we have $C_i{\stackrel{c}{\rightarrow}}C_{3-i}$, where $c$ is one of the two colours used to colour $G$. % By the remark after Lemma \ref{lem:dom}, this implies that $H$ is not trail-colour-connected so, by Proposition \ref{prop:Tccext}, $G$ is not trail-colour-connected, contradiction.
  This implies  that back in $G$  (1), (2) and (3) hold for $T_i$ and $T_{3-i}$. The last claim follows from Proposition \ref{prop:Tccext} and the fact that $H$ is not trail-colour-connected (see the remark just after Lemma \ref{lem:dom}).
\end{proof}

Now we are ready to  give a full characterization of those extensions of M-closed 2-edge-coloured graphs which are supereulerian. Recall that trail-colour-connected and colour-connected is not the same thing for M-closed 2-edge-coloured graphs. %Recall also that, by Proposition \ref{prop:Tccext},  if $G$ is an extension  of an M-closed 2-edge-coloured graph $H$, then $G$  is trail-colour-connected if and only if $H$ is trail-colour-connected.

\begin{theorem}
  \label{thm:exMclEchar}
  Let $G$ be an extension of an M-closed  2-edge-coloured graph. Then $G$ is supereulerian if and only if it is trail-colour-connected and has an eulerian factor.
\end{theorem}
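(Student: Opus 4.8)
The plan is to prove both directions by reducing to results already established. For the ``only if'' direction, suppose $G$ is supereulerian. Then $G$ has a spanning closed alternating trail, which is in particular an eulerian factor (with a single part), so $G$ has an eulerian factor. Moreover, any 2-edge-coloured graph with a spanning closed alternating trail is trail-colour-connected: given distinct vertices $u,v$, the spanning closed trail passes through both, and splitting it at (an occurrence of) $u$ and (an occurrence of) $v$ gives two alternating $(u,v)$-trails whose union is the closed trail, hence an alternating closed walk; one checks the colour-start condition via Lemma~\ref{lem:ccsuffTRAIL} using the same splitting at different occurrences/directions. So the forward direction is essentially immediate from the definitions.

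For the ``if'' direction, assume $G$ is trail-colour-connected and has an eulerian factor. I would induct on the number of parts in a minimum eulerian factor ${\cal G}=G_1,\ldots,G_k$. If $k=1$, $G$ is already supereulerian. If $G$ is colour-connected, we are done by Lemma~\ref{lem:exMclccE}. So assume $G$ is trail-colour-connected but not colour-connected and $k\geq 2$. The key step is to \emph{merge} two of the parts: I claim that for some pair $G_i,G_j$, the induced subgraph $G[V(G_i)\cup V(G_j)]$ is supereulerian, which would contradict the minimality of $k$ (replacing $G_i,G_j$ by this single supereulerian subgraph yields a smaller eulerian factor, and this subgraph is itself an extension of an M-closed 2-edge-coloured graph since induced subgraphs of such are again such). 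To produce the merge, apply Lemma~\ref{lem:2trails} to the pair $G_i,G_j$: either $G[V(G_i)\cup V(G_j)]$ is supereulerian (done), or conditions (1)--(3) hold, which in particular forces a very rigid ``$c$-dominates'' structure between $T_i$ and $T_{3-i}$.

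The main obstacle — and the heart of the argument — is to show that if \emph{every} pair $G_i,G_j$ fails to merge, i.e. the $c$-domination structure of Lemma~\ref{lem:2trails} holds for all pairs, then $G$ itself is not trail-colour-connected, contradicting the hypothesis. Here I would argue as follows. For a pair with $C_i\stackrel{c}{\to}C_{3-i}$ (working in the associated extension $H$ where the trails become cycles, as in the proofs of Lemmas~\ref{lem:exMclccE} and~\ref{lem:2trails}), the vertex set $V(G_i)\cup V(G_j)$ splits into the $c$-vertices $A$ and $c'$-vertices $B$ of $T_i$ together with $V(G_{3-i})$, and every edge incident to a $c$-vertex of $T_i$ within this set has a prescribed colour pattern. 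One shows directly (this is the content of the remark after Lemma~\ref{lem:dom}, promoted from pairs to the whole graph) that there is no alternating $(u,v)$-trail between appropriately chosen $u,v$ starting with one of the colours — intuitively, a $c$-vertex $x$ of $T_i$ has \emph{all} its edges to $V(G_i)\cup V(G_j)\setminus\{x\}$ of colour $c$ except for the $c'$-coloured edges of $T_i$ joining it to $c'$-vertices, and tracking parity of an alternating trail out of $x$ shows it can never reach certain targets with the required starting colour. One then needs to argue the failure of trail-colour-connectivity \emph{globally}, not just within one pair: since the $c$-domination holds for all pairs simultaneously, one picks a witness pair, identifies the ``bad'' vertices $u,v$, and shows that any alternating $(u,v)$-trail in all of $G$ is trapped inside $V(G_i)\cup V(G_j)$ by the colour constraints — any edge leaving this set to a third part $G_\ell$ would, via the $c$-domination structure of the pair $\{G_i,G_\ell\}$ or $\{G_j,G_\ell\}$, have a forced colour incompatible with continuing the alternating trail back. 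Carefully bookkeeping these forced colours to rule out all escape routes is the delicate part; once done, $G$ is not trail-colour-connected, the desired contradiction. This completes the induction and hence the theorem.

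(An alternative, possibly cleaner route for the ``if'' direction: build the extension $H$ of $G$ obtained by blowing up each vertex $v$ to an independent set of size $h(v)$, where $h(v)$ counts occurrences of $v$ across the spanning closed trails of the eulerian factor parts, so that the eulerian factor of $G$ becomes an alternating \emph{cycle} factor of $H$; then try to apply Theorem~\ref{thm:extMclHC} to $H$. The snag is that $H$ need not be colour-connected, only trail-colour-connected, and Theorem~\ref{thm:extMclHC} requires colour-connectivity — so one would still need to handle the non-colour-connected case exactly as above via Lemma~\ref{lem:2trails} and the $c$-domination analysis. I would therefore favor the inductive merging argument as the main line, using $H$ only as a bookkeeping device within each pair.)
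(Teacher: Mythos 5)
Your ``only if'' direction and the overall frame (minimum eulerian factor, contradiction via merging parts, using Lemma~\ref{lem:2trails} on pairs) match the paper, but the heart of your ``if'' direction rests on a claim that is false: you assert that if \emph{every} pair $G_i,G_j$ fails to merge (i.e.\ the $c$-domination structure of Lemma~\ref{lem:2trails} holds for all pairs), then $G$ cannot be trail-colour-connected. The paper's own construction shows otherwise. When the domination relations form a cyclic pattern, say $T_a{\stackrel{}{\rightarrow}}T_b{\stackrel{}{\rightarrow}}T_c{\stackrel{}{\rightarrow}}T_a$, no pair of the three parts has a supereulerian union (each pair is pairwise dominated, hence by the remark after Lemma~\ref{lem:dom} not even trail-colour-connected as a pair), yet all three trails can be merged simultaneously into one spanning closed alternating trail (Figure~\ref{fig:3cycle}); such a graph is supereulerian and trail-colour-connected, so your intended contradiction simply does not exist in this case, and no amount of ``bookkeeping of forced colours'' will produce it. This is exactly why the paper introduces the tournament $W$ on the trails (an edge between every pair exists since $G$ is connected and an extension of an M-closed graph, and then $T_i\rightarrow T_j$ or $T_j\rightarrow T_i$), invokes Moon's theorem \cite{moonCMB9} to extract a $3$-cycle when $W$ is not transitive, and merges \emph{three} trails at once.

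Even in the remaining case, where $W$ is transitive, the paper's contradiction is not the one you sketch: it takes the source trail $T_1$, a $c$-vertex $v$ of $T_1$ with respect to $T_2$, and splits into two subcases --- if $v$ is a $c$-vertex with respect to every other $T_i$ then $v$ witnesses failure of trail-colour-connectivity, and otherwise $v$ is a $c'$-vertex with respect to some $T_3$ and again a \emph{three-way} merge of $T_1,T_2,T_3$ (Figure~\ref{fig:TT3}) contradicts minimality of $k$. So triple merges are indispensable in both branches, and an argument confined to pairwise merges, as yours is, cannot be completed. (A smaller omission: before applying Lemma~\ref{lem:2trails} pairwise you also need an edge between each pair of parts, which the paper derives from connectivity of $G$, M-closedness and the absence of similar vertices across parts; your proposal does not address this.)
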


\begin{proof}
  If $G$ is supereulerian, then it is trail-connected \AY{and contains an eulerian factor}, so it suffices to consider the other direction. Assume that $G$ is trail-colour-connected with an eulerian factor and let ${\cal G}=G_1,G_2,\ldots{},G_k$, $k\geq 1$ be an eulerian factor of $G$ which is chosen such that $k$ is minimum. If $k=1$ we are done so assume that $k\geq 2$. Let $T_1,\ldots{},T_r$ be arbitrary spanning closed trails in $G_1,G_2,\ldots{},G_k$, respectively and fix a starting vertex $v_{i,1}$ for each trail $T_i$, $i\in [k]$. We will use the notation $T_i{\stackrel{c}{\rightarrow}}T_j$ to denote that (1),(2) and (3) in Lemma \ref{lem:2trails} hold for distinct $i,j\in [k]$, where $T_i$ plays the role of $T_1$ in the Lemma and all edges between $v_{i,1}$ and $V(T_j)$ have colour $c$. We also write $T_i\rightarrow T_j$ if $T_i{\stackrel{a}{\rightarrow}}T_j$ for some $a\in \{c,c'\}$, where the edges of $G$ are coloured by colours $c,c'$.\\
By the minimality of $k$ and Lemma \ref{lem:2trails}, if there is an edge between $G_i$ and $G_j$, then we have $T_i\dom T_j$ or $T_j\dom T_i$. In particular 
there are no two similar vertices $u,v$ which belong to different $G_i$'s (since this would imply that that $u$ and $v$  would both have edges of both colours to the other trail). Thus, using that $G$ is connected and an extension of an M-closed 2-edge-coloured graph,   it is easy to see that there is an edge between $V(T_i)$ and $V(T_j)$ for every choice of $1\leq i<j\leq k$. Hence, by the remark above,  we have $T_i\dom T_j$ or $T_j\dom T_i$ for every choice of $1\leq i<j\leq k$. Let $W$ be the tournament with vertex set $w_1,w_2,\ldots{},w_k$ such that $w_iw_j$ is an arc of $W$ if $T_i\dom T_j$. Suppose first that $W$ contains a cycle. Then it follows from the well known result by Moon \cite{moonCMB9} that a strong tournament is vertex pancyclic, that $W$ has a 3-cycle $w_a\dom w_b\dom w_c\dom w_a$ and hence we have $T_a\dom{}T_b\dom T_c\dom T_a$. In this case we can replace $T_a,T_b,T_c$ by one closed trail as indicated in Figure \ref{fig:3cycle}.

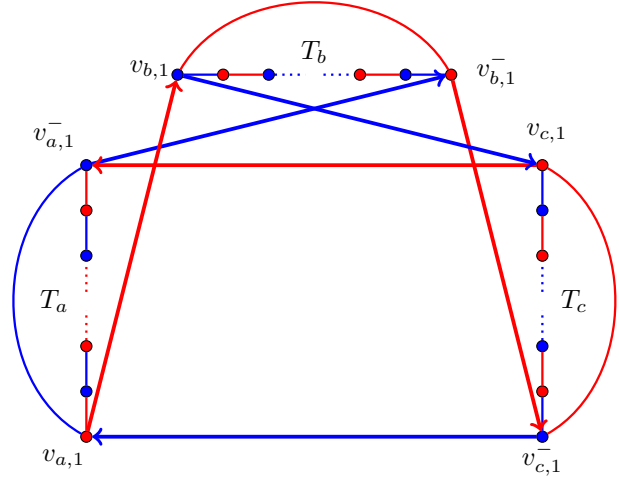
\begin{figure}[H]
  \begin{center}
    \begin{tikzpicture}[scale=0.6]
      \tikzstyle{vertexR}=[circle,draw, top color=red!100, bottom color=red!100, minimum size=7pt, scale=0.6, inner sep=0.5pt]
\tikzstyle{vertexBl}=[circle,draw, top color=blue!100, bottom color=blue!100, minimum size=7pt, scale=0.6, inner sep=0.5pt]
      \tikzstyle{vertexB}=[circle,draw, minimum size=15pt, scale=0.6, inner sep=0.5pt]
      \node(a1) at (0,0) [vertexR] {};
      \node(a2) at (0,1) [vertexBl] {};
      \node(a3) at (0,2) [vertexR] {};
      \node(a4) at (0,4) [vertexBl] {};
      \node(a5) at (0,5) [vertexR] {};
      \node(a6) at (0,6) [vertexBl] {};
      \LineRED {(a1) to (a2);}
      \LineBLUE{(a2) to (a3);}
      \draw [line width=0.03cm, dotted, color=red] (a3) to (0,2.8);
      \draw [line width=0.03cm, dotted, color=red] (0,3.2) to (a4);
      \LineBLUE{(a1) to [out=150,in=210] (a6);}
      \LineRED {(a5) to (a6);}
      \LineBLUE{(a4) to (a5);}

      \node(c1) at (10,0) [vertexBl] {};
      \node(c2) at (10,1) [vertexR] {};
      \node(c3) at (10,2) [vertexBl] {};
      \node(c4) at (10,4) [vertexR] {};
      \node(c5) at (10,5) [vertexBl] {};
      \node(c6) at (10,6) [vertexR] {};
      \LineBLUE{(c1) to (c2);}
      \LineRED{(c2) to (c3);}
      \draw [line width=0.03cm, dotted, color=blue] (c3) to (10,2.8);
      \draw [line width=0.03cm, dotted, color=blue] (10,3.2) to (c4);
      \LineRED{(c4) to (c5);}
      \LineBLUE{(c5) to (c6);}
      \LineRED{(c1) to [out=30,in=-30] (c6);}
      \node (b1) at (2,8) [vertexBl]{};
      \node (b2) at (3,8) [vertexR]{};
      \node (b3) at (4,8) [vertexBl]{};
      \node (b4) at (6,8) [vertexR]{};
      \node (b5) at (7,8) [vertexBl]{};
      \node (b6) at (8,8) [vertexR]{};
      \LineBLUE{(b1) to (b2);}
      \LineRED{(b2) to (b3);}
      \LineBLUE{(b5) to (b6);}
      \LineRED{(b4) to (b5);}
      \draw[line width=0.03cm, dotted, color=blue] (b3) to (4.8,8);
      \draw [line width=0.03cm, dotted, color=blue] (5.2,8) to (b4);
      \LineRED{(b1) to [out=60,in=120] (b6);}
      %\draw[->, line width=0.06cm] (4,8) to (8,3);
      %\draw[->, line width=0.06cm] (0,3) to (4,7.9);
      %\draw[->, line width=0.06cm] (8,2.9) to (0.1,3);
      \draw[->,line width=0.05cm, color=blue] (c1) to (a1);
      \draw[->,line width=0.05cm, color=red] (b6) to (c1);
      \draw[->,line width=0.05cm, color=blue] (a6) to (b6);
      \draw[->,line width=0.05cm, color=red] (c6) to (a6);
      \draw[->,line width=0.05cm, color=blue] (b1) to (c6);
      \draw[->,line width=0.05cm, color=red] (a1) to (b1);
      \node () at (10.7,3) {$T_c$};
      \node () at (-0.7,3){$T_a$};
      \node () at (5,8.5) {$T_b$};
      % \node () at (10.2,6.1) {$c$};
      \node () at (1.4,8.1) {$v_{b,1}$};
      \node () at (9,8.1){$v^-_{b,1}$};
      \node () at (-0.5,-0.5) {$v_{a,1}$};
      \node () at (-0.7,6.7) {$v^-_{a,1}$};
      \node () at (10.1,6.7) {$v_{c,1}$};
      \node () at (10,-0.5) {$v^-_{c,1}$};

   \end{tikzpicture}
  \end{center}
  \caption{An illustration of the case when we have $T_a\dom T_b\dom T_c\dom T_a$. The colours of the vertices denote the colour of all edges from that vertex to the vertices of the trail which it is monochromatic to  (E.g. every edge between a blue vertex of $T_a$ and $V(T_b)$ is blue). The oriented fat edges between the trails indicate a 6-cycle that can be used to merge the three closed trails into one. We obtain the desired trail by starting at $v_{a,1}$, traversing $T_a$, then going from $v_{a,1}$ to $v_{b,1}$, traversing $T_b$, then going from $v_{b,1}$ to $v_{c,1}$, traversing $T_c$ and finally using the arcs $v_{c,1}v^-_{a,1},v^-_{a,1}v^-_{b,1},v^-_{b,1}v^-_{c,1},v^-_{c,1}v_{a,1}$.}\label{fig:3cycle}
  \end{figure}

  Hence we may assume that $W$ is an acyclic (transitive) tournament and that the ordering of ${\cal G}$ is such that $T_i\dom T_j$ whenever $1\leq i<j\leq k$. As $G$ is trail-connected, we must have $k>2$ by Lemma \ref{lem:2trails}.  Let $v$ be a $c$-vertex of $T_1$ with respect to $T_2$. Recall that this means that all edges between $v$ and $V(T_2)$ have colour $c$. If $v$ is a $c$-vertex with respect to $V(T_i)$ for every $1<i\leq k$, then $G$ is not trail-colour-connected as the vertex $v$ has no trail starting with colour $c'$ to any vertex out-side $V(T_1)$. Hence we may assume w.l.o.g. that $v$ is a $c'$-vertex with respect to $V(T_3)$. Now we can merge $T_1,T_2,T_3$ into one closed alternating trail as indicated in Figure \ref{fig:TT3}. This contradicts the minimality of $k$ and the proof is complete.
  \end{proof}

  \begin{figure}[H]
    \begin{center}
      \begin{tikzpicture}[scale=0.6]
        \tikzstyle{vertexB}=[circle,draw, minimum size=7pt, scale=0.6, inner sep=0.5pt]
      \tikzstyle{vertexR}=[circle,draw, top color=red!100, bottom color=red!100, minimum size=7pt, scale=0.6, inner sep=0.5pt]
      \tikzstyle{vertexBl}=[circle,draw, top color=blue!100, bottom color=blue!100, minimum size=7pt, scale=0.6, inner sep=0.5pt]

      \node(a1) at (0,0) [vertexB] {};
      \node(a2) at (0,1) [vertexB] {};
      \node(a3) at (0,2) [vertexB] {};
      \node(a4) at (0,4) [vertexB] {};
      \node(a5) at (0,5) [vertexB] {};
      \node(a6) at (0,6) [vertexB] {};
      \LineRED {(a1) to (a2);}
      \LineBLUE{(a2) to (a3);}
      \draw [line width=0.03cm, dotted, color=red] (a3) to (0,2.8);
      \draw [line width=0.03cm, dotted, color=red] (0,3.2) to (a4);
      \LineBLUEdotted{(a1) to [out=150,in=210] (a6);}    % dotted?
      \LineRED {(a5) to (a6);}
      \LineBLUE{(a4) to (a5);}

      \node(c1) at (10,0) [vertexB] {};
      \node(c2) at (10,1) [vertexB] {};
      \node(c3) at (10,2) [vertexB] {};
      \node(c4) at (10,4) [vertexB] {};
      \node(c5) at (10,5) [vertexB] {};
      \node(c6) at (10,6) [vertexB] {};
      \LineBLUE{(c1) to (c2);}
      \LineRED{(c2) to (c3);}
      \draw [line width=0.03cm, dotted, color=blue] (c3) to (10,2.8);
      \draw [line width=0.03cm, dotted, color=blue] (10,3.2) to (c4);
      \LineRED{(c4) to (c5);}
      \LineBLUE{(c5) to (c6);}
      \LineREDdotted{(c1) to [out=30,in=-30] (c6);}   % dooted?
      \node (b1) at (2,8) [vertexB]{};
      \node (b2) at (3,8) [vertexB]{};
      \node (b3) at (4,8) [vertexB]{};
      \node (b4) at (6,8) [vertexB]{};
      \node (b5) at (7,8) [vertexB]{};
      \node (b6) at (8,8) [vertexB]{};
      \LineBLUE{(b1) to (b2);}
      \LineRED{(b2) to (b3);}
      \LineBLUE{(b5) to (b6);}
      \LineRED{(b4) to (b5);}
      \draw[line width=0.03cm, dotted, color=blue] (b3) to (4.8,8);
      \draw [line width=0.03cm, dotted, color=blue] (5.2,8) to (b4);
      \LineRED{(b1) to [out=60,in=120] (b6);}
      \draw[line width=0.06cm, color=blue] (b1) to (a1);
      \draw[line width=0.06cm, color=blue] (b1) to (a6);
      \draw[line width=0.06cm, color=red] (b1) to (c1);
      \draw[line width=0.06cm, color=red] (b1) to (c6);
      \node () at (1.3,8.1) {$v_{1,1}$};

      \node () at (10.7,3) {$T_2$};
      \node () at (-0.7,3){$T_3$};
      \node () at (5,8.5) {$T_1$};

  \end{tikzpicture}
  \end{center}
  \caption{Showing how to merge the trails $T_1,T_2,T_3$ into one trail. Starting from $v_{1,1}$ first pick up all vertices of $V(T_2)$ and return to $v_{1,1}$; then pick up all vertices of $V(T_3)$ and return to $v_{1,1}$; finally traverse $T_1$ starting from $v_{1,1}$.}\label{fig:TT3}
\end{figure}
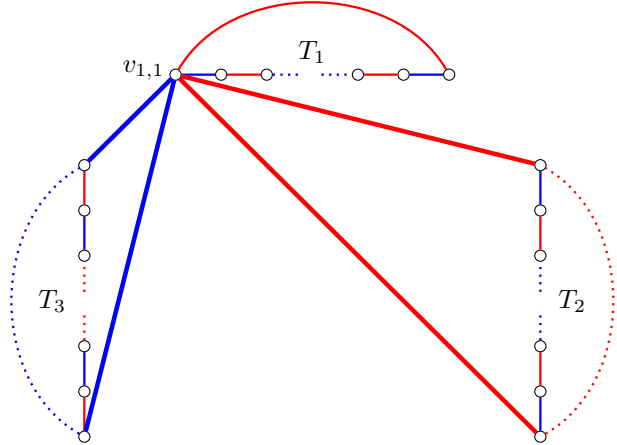

All arguments in the proofs above are algorithmic so combining this with Lemma \ref{lem:findEF} we get the following.

\begin{corollary}
  There exists a polynomial algorithm ${\cal B}$ which given  a graph $G$ which is an extension of an M-closed 2-edge-coloured multigraph, either returns a spanning closed alternating trail of $G$ or provides a certificate that $G$ has no such trail, because it either has no  eulerian factor or is not  trail-colour-connected.
\end{corollary}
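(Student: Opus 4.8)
The plan is to assemble algorithm ${\cal B}$ from ingredients that have already been shown to be polynomial: the eulerian-factor algorithm of Lemma~\ref{lem:findEF}, the trail-colour-connectivity test (the corollary following Theorem~\ref{thmTRAIL}), and the algorithm ${\cal A}$ which, given a colour-connected extension of an M-closed 2-edge-coloured graph that has a cycle factor, produces an alternating hamiltonian cycle. On input $G$, algorithm ${\cal B}$ first calls the algorithm of Lemma~\ref{lem:findEF}; if $G$ has no eulerian factor, ${\cal B}$ outputs this as the certificate. Otherwise it tests whether $G$ is trail-colour-connected; if not, it outputs this as the certificate. If $G$ passes both tests, then Theorem~\ref{thm:exMclEchar} guarantees that $G$ is supereulerian, and the remaining task is to actually \emph{produce} a spanning closed alternating trail, i.e. to make the proof of Theorem~\ref{thm:exMclEchar} constructive.

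So the core of the work is the constructive version of Theorem~\ref{thm:exMclEchar}. Starting from an eulerian factor ${\cal G}=G_1,\dots,G_k$ of minimum size, fix spanning closed alternating trails $T_1,\dots,T_k$ and form the extension $H$ of $G$ in which each vertex is blown up according to the number of times it occurs on its trail, exactly as in the proof of Lemma~\ref{lem:exMclccE}; the $T_i$ become disjoint alternating cycles $C_1,\dots,C_k$ covering $V(H)$. If $H$ is colour-connected, apply ${\cal A}$ to $H$ to obtain an alternating hamiltonian cycle, and contract the blow-up classes to get the desired spanning closed alternating trail of $G$. If $H$ is not colour-connected, then, as in the discussion around Lemma~\ref{lem:2trails}, between any two $C_i,C_j$ the $c$-domination structure holds, so we compute for each ordered pair which of $T_i\dom T_j$, $T_j\dom T_i$ holds — this only requires inspecting the colours of the edges between $V(T_i)$ and $V(T_j)$ and the parity structure of $T_i,T_j$ — and build the tournament $W$ on $w_1,\dots,w_k$. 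If $W$ has a directed cycle, find a $3$-cycle $w_a\dom w_b\dom w_c\dom w_a$ and merge $T_a,T_b,T_c$ into one trail as in Figure~\ref{fig:3cycle}. If $W$ is transitive, relabel so $T_i\dom T_j$ for $i<j$; since $G$ is trail-colour-connected and $k\ge 2$, Lemma~\ref{lem:2trails} forces $k>2$, and scanning the $c$-vertices of $T_1$ with respect to $T_2$ and testing their edges to $V(T_3)$ locates a vertex that is a $c$-vertex with respect to $T_2$ but a $c'$-vertex with respect to $T_3$ (one must exist, else $G$ is not trail-colour-connected), and we merge $T_1,T_2,T_3$ as in Figure~\ref{fig:TT3}. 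In either case three trails become one, yielding a new eulerian factor of $G$ with $k-1$ components; we then repeat the whole loop. Each iteration strictly decreases $k$, so after at most $|V(G)|$ iterations we reach $k=1$, that is, a spanning closed alternating trail of $G$.

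Every individual step — the algorithm of Lemma~\ref{lem:findEF}, building $H$, the colour-connectivity test, a call to ${\cal A}$, assembling and searching $W$, and performing a merge — is polynomial in $|V(G)|+|E(G)|$, and the outer loop runs at most $|V(G)|$ times, so ${\cal B}$ is polynomial. I expect the main obstacle to be not any single reduction but the bookkeeping needed to make the structural case analysis of Theorem~\ref{thm:exMclEchar} fully algorithmic: one must be sure that the $c$-domination relation between two trails is efficiently detectable (it is, since by Lemma~\ref{lem:2trails} it is a statement purely about the colours of edges between the two trails together with the parity structure of the trails), that the two merge constructions genuinely produce a closed alternating trail whose first and last edges have different colours, and that after a merge the collection is again an eulerian factor of $G$ so that the loop — in particular the re-selection of a minimum factor via the same procedure — can continue. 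None of this requires ideas beyond the proofs already given, so the corollary follows.
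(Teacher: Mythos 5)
Your proposal is correct and follows essentially the same route as the paper, whose proof of this corollary is simply the observation that all the preceding arguments (Lemma~\ref{lem:findEF}, the trail-colour-connectivity test from Theorem~\ref{thmTRAIL}, algorithm ${\cal A}$, and the merging steps in the proofs of Lemma~\ref{lem:2trails} and Theorem~\ref{thm:exMclEchar}) are algorithmic and can be combined; your write-up just makes that explicit. One small adjustment: you should not insist on an eulerian factor of \emph{minimum} size (Lemma~\ref{lem:findEF} only yields some factor, and minimizing $k$ is not needed) --- algorithmically, minimality is replaced by the rule that whenever two trails joined by an edge fail the domination structure of Lemma~\ref{lem:2trails}, the corresponding two-trail extension is colour-connected and they can be merged directly via ${\cal A}$, which is exactly the kind of bookkeeping you already anticipate.
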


By Lemma \ref{lem:CMGtcciffcc}, trail-colour-connectivity coincides with colour-connectivity for extended 2-edge-coloured complete graphs. Hence we get the following characterization of supereulerian extensions of 2-edge-coloured complete graphs

\begin{corollary}\label{mainthm}
An  extended 2-edge-coloured complete graph is supereulerian if and only if it has an eulerian factor and is colour-connected.
\end{corollary}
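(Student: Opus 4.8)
The plan is to obtain this as an immediate corollary of Theorem~\ref{thm:exMclEchar} combined with Lemma~\ref{lem:CMGtcciffcc}; no new argument should be needed. First I would observe that every 2-edge-coloured complete graph $K$ is trivially M-closed: the defining condition ``if $xyz$ is a path with $\phi(xy)=\phi(yz)$ then $xz\in E$'' holds because \emph{every} pair of distinct vertices of $K$ is adjacent. Consequently, if $G$ is an extension of a 2-edge-coloured complete graph, then $G$ is in particular an extension of an M-closed 2-edge-coloured graph, so Theorem~\ref{thm:exMclEchar} applies verbatim and tells us that $G$ is supereulerian if and only if $G$ is trail-colour-connected and has an eulerian factor.

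Next I would check that $G$ is a complete multipartite graph. Writing $G=K[I_{p_1},\ldots,I_{p_n}]$, the sets $I_{p_1},\ldots,I_{p_n}$ are independent and partition $V(G)$, and since $v_iv_j\in E(K)$ for all $i\neq j$, the construction of an extension puts all edges between $I_{p_i}$ and $I_{p_j}$; thus each pair $I_{p_i},I_{p_j}$ induces a complete bipartite graph, which is exactly the definition of complete multipartite. Hence Lemma~\ref{lem:CMGtcciffcc} applies to $G$ and shows that $G$ is trail-colour-connected if and only if it is colour-connected.

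Chaining the two equivalences then gives: $G$ is supereulerian $\iff$ $G$ has an eulerian factor and is trail-colour-connected $\iff$ $G$ has an eulerian factor and is colour-connected, which is the statement. The only point requiring any care — and it is the ``main obstacle'', such as it is — is to make sure the two cited results are genuinely being applied to the same object, i.e.\ that an extended 2-edge-coloured complete graph is simultaneously an extension of an M-closed graph \emph{and} a complete multipartite graph; both facts follow directly from the definitions, as sketched above. I would also remark, for completeness, that the ``only if'' direction already follows from the trivial observation that a supereulerian graph is trail-colour-connected (hence, being complete multipartite, colour-connected by Lemma~\ref{lem:CMGtcciffcc}) and contains an eulerian factor (the spanning closed alternating trail is itself a one-part eulerian factor), so the real content is entirely in the ``if'' direction supplied by Theorem~\ref{thm:exMclEchar}.
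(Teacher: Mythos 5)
Your proposal is correct and follows exactly the paper's own derivation: the paper obtains the corollary in one line by noting that Lemma~\ref{lem:CMGtcciffcc} makes trail-colour-connectivity coincide with colour-connectivity for extended 2-edge-coloured complete graphs and then applying Theorem~\ref{thm:exMclEchar}. You merely spell out the routine verifications (complete graphs are M-closed, extensions of complete graphs are complete multipartite) that the paper leaves implicit.
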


\iffalse

\begin{prop}
 Both implications of Theorem \ref{mainthm} fail \jbj{for general 2-edge-coloured graphs}.
\end{prop}

\begin{proof}
 \begin{itemize}
  \item[$\Leftarrow:$] The graph shown in Figure \ref{figce1} is supereulerian as it has the spanning alternating closed trail ($v_1v_2v_3v_1v_4v_5v_1$) but is not colour-connected. Indeed, there is no path from $v_1$ to $v_5$ starting with a blue edge.
  \item[$\Rightarrow:$] The graph shown in Figure \ref{figce2} has an eulerian factor ($v_1v_2v_3v_4v_1$ and $v_5v_6v_7v_8v_5$) and is colour-connected but has no spanning eulerian subgraph.
 \end{itemize}

 \begin{figure}[H]
 \begin{subfigure}{0.48\linewidth}
\centering{\includegraphics{figure2.pdf}}
\caption{A non colour-connected graph with \\ a spanning closed alternating trail.}
\label{figce1}
\end{subfigure}
 \begin{subfigure}{0.48\linewidth}

\centering{\includegraphics{}}
\caption{A colour-connected graph with an eulerian factor but no spanning eulerian subgraph.}
\label{figce2}
\end{subfigure}

\caption{}
\label{figce}
\end{figure}

 \end{proof}
\fi

\section{Complexity for general 2-edge-coloured graphs}

\begin{theorem}\label{thmsesnpc}
 It is NP-complete to decide if a 2-edge-coloured graph is supereulerian.
\end{theorem}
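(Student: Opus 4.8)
The plan is to reduce from a known NP-complete problem on digraphs or undirected graphs, exploiting the correspondences described in the introduction. The most natural source is the fact, cited in the paper (via \cite{bangTCS526} for digraphs and \cite{pulleyblankJGT3} for graphs), that deciding whether a digraph is supereulerian is NP-complete, together with the digraph-to-2-edge-coloured-graph construction $D\mapsto G(D)$ presented in the introduction: replace each arc $uv$ by a path $u\,w_{uv}\,v$ with the edge $uw_{uv}$ coloured $1$ and $w_{uv}v$ coloured $2$. Under this construction, directed closed trails in $D$ correspond exactly to alternating closed trails in $G(D)$ that start and end at vertices of $V$, and every vertex $w_{uv}$ has degree $2$ with one edge of each colour, so any spanning alternating closed trail of $G(D)$ must pass straight through each $w_{uv}$. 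Hence $G(D)$ is supereulerian if and only if $D$ has a spanning closed directed trail, i.e.\ $D$ is supereulerian.

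First I would state the reduction precisely: given an instance $D=(V,A)$ of supereulerian digraph, construct $G(D)$ in polynomial time. Then I would prove the two directions of the equivalence. For the forward direction, a spanning closed directed trail $T$ of $D$ lifts to a closed walk in $G(D)$ by replacing each arc $uv$ of $T$ by the path $uw_{uv}v$; this walk alternates in colour (each such two-edge segment goes $1,2$ and consecutive segments meet at a vertex of $V$, giving $\dots,2,1,\dots$), uses each edge of $G(D)$ at most once because $T$ uses each arc at most once and each $w_{uv}$ is dedicated to one arc, and it visits every vertex of $G(D)$: vertices of $V$ because $T$ is spanning, and vertices $w_{uv}$ because we must check they are covered — here I would need $T$ to use every arc, which is not guaranteed, so this is the subtle point (see below). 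For the converse direction, a spanning alternating closed trail $S$ of $G(D)$ must visit each $w_{uv}$, and since $\deg(w_{uv})=2$ it enters and leaves along its two edges, so $S$ contains the segment $uw_{uv}v$ or its reverse; contracting all such segments yields a spanning closed directed trail of $D$.

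The main obstacle is exactly the asymmetry in the vertex sets: $G(D)$ has a vertex $w_{uv}$ for every arc, but a supereulerian trail of $D$ need not traverse every arc, so the naive lift need not be spanning in $G(D)$. The fix is to choose a different gadget or a different source problem. One clean option is to reduce instead from the problem ``does $D$ have a spanning closed directed trail'' on digraphs in which we may additionally assume (or first reduce to) the case where being supereulerian is equivalent to having an eulerian subdigraph using a prescribed structure; but the cleanest fix is to subdivide: for each arc $uv$ introduce $w_{uv}$ as above but also make $w_{uv}$ have an alternative short alternating detour that can be used to ``absorb'' $w_{uv}$ when the arc $uv$ is not used — for instance attach to each $w_{uv}$ a pendant alternating triangle or a parallel pair of edges of opposite colours back to $u$, so that $w_{uv}$ can always be covered by a small closed alternating subtrail local to $u$, independently of whether the main trail uses arc $uv$. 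With such a gadget one must re-verify both directions, and in particular check that the local absorbing structure cannot be abused to cover a vertex of $V$ that the corresponding directed trail fails to cover; keeping the gadget attached only to $w$-vertices (which are the ``new'' vertices) avoids this. Alternatively, and perhaps most economically, reduce from the NP-complete undirected supereulerian problem \cite{pulleyblankJGT3} on a graph $G_0$: build a 2-edge-coloured graph by replacing each edge $xy$ of $G_0$ with a path $x\,m_{xy}\,y$ of colours $1,2$, and handle the extra midpoints $m_{xy}$ by the same absorbing-gadget trick; here the alternating-trail constraint around each original vertex $x$ forces the trail to enter and leave $x$ consistently, and one shows $G_0$ is supereulerian iff the constructed 2-edge-coloured graph is. I expect the write-up to consist of: (1) the construction, (2) ``$\Rightarrow$'' lifting a solution with the absorbing gadgets handling unused edges, (3) ``$\Leftarrow$'' contracting midpoint segments, and (4) a short paragraph verifying polynomiality and that the target problem's membership in NP is immediate (guess the trail, verify it is closed, alternating, spanning, and edge-simple in polynomial time).
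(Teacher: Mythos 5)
Your route is genuinely different from the paper's. The paper does not go through supereulerian digraphs at all: it reduces from the alternating Hamiltonian cycle problem in 2-edge-coloured graphs (NP-complete by Theorem \ref{thm:npcbip2ec}), splitting each vertex $v$ into $v_r,v,v_b$ with a blue edge $v_rv$ and a red edge $vv_b$ and routing red edges between $r$-copies and blue edges between $b$-copies; since $v_r$ (resp.\ $v_b$) has a single blue (resp.\ red) edge, every spanning alternating closed trail must traverse $v_rvv_b$ exactly once, so such trails in $G'$ are exactly the alternating Hamiltonian cycles of $G$. Your plan instead reduces from supereulerian digraphs (NP-complete by the cited \cite{bangTCS526}) via the arc-subdivision construction $G(D)$, and you correctly spot the real obstacle: a spanning closed trail of $D$ need not use every arc, so the $w_{uv}$ vertices are not automatically covered. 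Your fix of adding, for each $w_{uv}$, an extra edge back to $u$ of the opposite colour does work: $w_{uv}$ then still has exactly one edge of colour~1, so it is visited exactly once, either as a genuine arc segment $u\,w_{uv}\,v$ or as a detour $u\,w_{uv}\,u$ insertable at any passage of the trail through $u$; since the graph stays bipartite between $V$-vertices and $w$-vertices, the usual parity argument shows the arc segments are traversed consistently, and deleting detours leaves a spanning closed directed trail of $D$. (Parallel edges are legitimate here, as the paper works with multigraphs.) What the paper's reduction buys is that it is entirely self-contained within the 2-edge-coloured world and needs no absorbing gadget or consistency argument; what yours buys is that it piggybacks directly on a known digraph hardness result.

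Two of your hedged alternatives, however, do not survive scrutiny and should be dropped from a write-up. A pendant alternating triangle at $w_{uv}$ cannot be absorbed: with the triangle attached, $w_{uv}$ still has only one edge of one of the two colours, so it can be visited at most once, while covering the two triangle vertices would force two visits. And the ``most economical'' reduction from Pulleyblank's undirected result is flawed as stated: colouring the two halves of each subdivided edge $x\,m_{xy}\,y$ with colours $1,2$ implicitly orients the edge, and the same parity argument shows a spanning alternating closed trail of the lift corresponds to a spanning closed \emph{directed} trail of that arbitrary orientation of $G_0$, not of $G_0$ itself (a triangle with an acyclic orientation already gives a counterexample to the claimed equivalence). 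So if you pursue your approach, commit to the digraph source and the opposite-coloured parallel-edge gadget, and carry out the two verification directions you postponed; as they stand they are the substance of the proof, not an afterthought.
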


\begin{proof}
 We show how to reduce  the problem of deciding if a 2-edge-coloured graph has an alternating hamiltonian cycle to the problem of deciding if a 2-edge-coloured graph is supereulerian. Let $G$ be a 2-edge-coloured graph. We create the graph $G'$ as follows:
 \begin{itemize}
  \item for every vertex $v\in V(G)$, add vertices $v_{r}$ and $v_{b}$.
  \item replace every red edge $uv$ by $u_{r}v_{r}$ and every blue edge $uv$ by $u_{b}v_{b}$.
  \item for every vertex $v\in V(G)$, add a blue edge $v_{r}v$ and a red edge $vv_{b}$.
 \end{itemize}
The construction of $G'$ is illustrated in Figure \ref{figsesnpc}. Since the vertices $v_r$ only have one incident blue edge, they can only be used once in an eulerian subgraph. Hence, when a spanning closed alternating trail reaches a vertex $v_r$, it has to go to $v$ and then $v_b$ and cannot go back to any of this vertices again. This implies that if a spanning eulerian subgraph of $G'$ exists, it immediately provides a hamiltonian cycle in $G$.

Conversely, if a hamiltonian cycle exists in $G$, we replace every vertex $v$ by $v_rvv_b$ if $v$ is reached with a red edge and by $v_bvv_r$ otherwise and we obtain a spanning eulerian subgraph of $G'$.\qedhere

 \begin{figure}[!h]
 \begin{subfigure}{0.48\linewidth}

\centering{\includegraphics{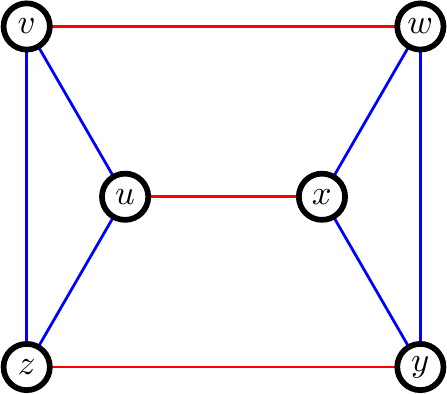}}
\caption{A non-hamiltonian graph $G$.}
\end{subfigure}
 \begin{subfigure}{0.48\linewidth}
 
\centering{\includegraphics{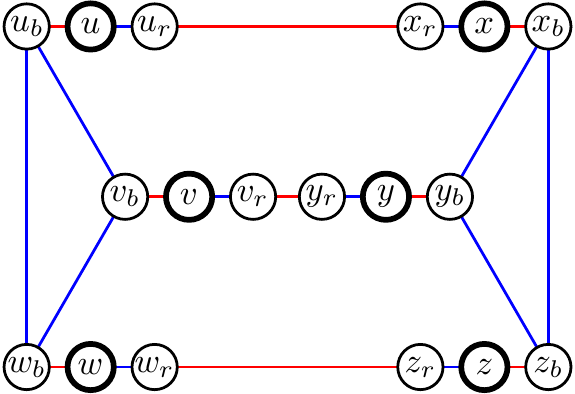}}
\caption{The associated graph $G'$ has no spanning eulerian subgraph.}
\end{subfigure}
\caption{}\label{fig:NPCfig}
\label{figsesnpc}
\end{figure}

 \end{proof}

 The construction that we  used above  may not lead to a 2-edge-coloured graph with an  eulerian factor, but we can modify it by replacing every vertex $v$ by the gadget $g_v$ depicted in Figure \ref{figgadget}. We still replace every red edge $uv$ by $u_rv_r$ and blue edge $uv$ by $u_bv_b$. As previously, the vertices $v_r$ and $v_b$ can only be used once in a spanning eulerian subgraph because they have only one incident blue and red edge respectively. We find that the spanning eulerian subgraphs of $G'$ are exactly the hamiltonian cycles of $G$ where we replace vertices $v$ by $v_bv_1v_2v_3v_4v_1v_r$ or $v_rv_1v_4v_3v_2v_1v_b$. Note that the union for $v\in V(G)$ of the $v_bv_1v_rv_4v_3v_2v_b$ provide an eulerian factor in $G'$.\qedhere
 
  \begin{figure}[!h]
\centering{\includegraphics{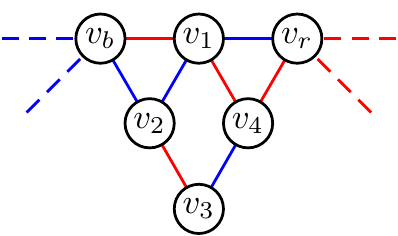}}
\caption{The gadget $g_v$.}
\label{figgadget}
\end{figure}

\section{Further  generalizations of 2-edge-coloured complete graphs}

Complete bipartite graphs are a subclass of the class of complete multipartite graphs. As we saw in the end of Section \ref{sec:supereuler}, for 2-edge-coloured complete bipartite graphs we can rely on results on supereulerian  bipartite tournaments to classify supereulerian 2-edge-coloured complete bipartite graphs. For general 2-edge-coloured complete multipartite graphs, we have no correspondence similar to the BB-correspondence.

\begin{problem}
  What is the completely of deciding whether a 2-edge-coloured complete multipartite graph has an alternating hamiltonian cycle? Is there a good characterization?
\end{problem}

The following is an easy consequence of Lemma \ref{lem:CMGtcciffcc}.

\begin{prop}\label{prop:CMGEulercc}
If a 2-edge-coloured complete multipartite graph $G$ has a spanning closed alternating trail, then $G$ is colour-connected.
\end{prop}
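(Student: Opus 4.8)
The plan is to prove the contrapositive: if a 2-edge-coloured complete multipartite graph $G$ is not colour-connected, then $G$ has no spanning closed alternating trail. By Lemma \ref{lem:CMGtcciffcc}, for a complete multipartite graph, colour-connectivity and trail-colour-connectivity coincide, so $G$ not colour-connected means $G$ is not trail-colour-connected. On the other hand, a remark already made in the text (just after Lemma \ref{lem:dom}, and echoed in the discussion around Figure \ref{fig:notcc}) records that \emph{every} 2-edge-coloured graph with a spanning closed alternating trail is trail-colour-connected: indeed, if $T$ is such a trail, then for any two distinct vertices $u,v$ one can read off two $(u,v)$-trails from $T$ (traversing $T$ in the two directions starting at $u$ and stopping at $v$), whose union is the closed walk $T$ itself, which is alternating; and by choosing the starting direction appropriately one gets a $(u,v)$-trail starting with each colour, which by Lemma \ref{lem:ccsuffTRAIL} is exactly trail-colour-connectivity.

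So the only real content is to combine these two facts. Concretely, I would argue: suppose $G$ is a 2-edge-coloured complete multipartite graph that has a spanning closed alternating trail. Then $G$ is trail-colour-connected by the observation above. Since $G$ is complete multipartite, Lemma \ref{lem:CMGtcciffcc} applies and gives that $G$ is colour-connected. This is exactly the claimed implication, so we are done.

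There is essentially no obstacle here; the statement is a two-line corollary of Lemma \ref{lem:CMGtcciffcc} together with the elementary fact that a spanning closed alternating trail witnesses trail-colour-connectivity. The only point that deserves a word of care is the claim that a spanning closed alternating trail $T$ yields, for every ordered pair $(u,v)$ and every colour $c\in\{1,2\}$, an alternating $(u,v)$-trail starting with colour $c$: traverse $T$ starting at an occurrence of $u$; in one of the two directions the first edge has colour $1$ and in the other it has colour $2$ (because $T$ is a closed alternating trail, so its first and last edges differ in colour), and we stop at the first occurrence of $v$, which gives an alternating $(u,v)$-trail (a sub-walk of a trail with no repeated edges is still a trail) beginning with the prescribed colour. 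Invoking Lemma \ref{lem:ccsuffTRAIL} then finishes the verification that $G$ is trail-colour-connected, and Lemma \ref{lem:CMGtcciffcc} upgrades this to colour-connectivity.
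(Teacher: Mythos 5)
Your proof is correct and takes essentially the same route as the paper, which presents the proposition as an easy consequence of Lemma \ref{lem:CMGtcciffcc} combined with the observation (recorded after Figure \ref{fig:notcc}) that every 2-edge-coloured graph with a spanning closed alternating trail is trail-colour-connected. The only minor imprecision is the aside that the union of the two trails read off from $T$ is ``the closed walk $T$ itself'' (this can fail when $v$ occurs more than once on $T$), but it is harmless because your final argument via Lemma \ref{lem:ccsuffTRAIL} — extracting, for each ordered pair and each colour, an alternating $(u,v)$-trail with prescribed first colour — is the one that actually carries the proof.
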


\begin{prop}
\label{prop:CMGccnoEuler}
There exists infinitely many non-supereulerian 2-edge-coloured complete multipartite graphs which are colour-connected and have an alternating cycle factor.
\end{prop}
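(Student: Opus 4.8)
The statement asserts the existence of infinitely many 2-edge-coloured complete multipartite graphs that are colour-connected, have an alternating cycle factor, yet are not supereulerian. The natural strategy is to construct an explicit infinite family and verify the three properties directly. Since Proposition~\ref{prop:CMGEulercc} tells us colour-connectivity is a \emph{necessary} condition for being supereulerian, and Theorem~\ref{thm:HCchar} (and its extension) show that for \emph{complete} 2-edge-coloured graphs colour-connectivity plus a cycle factor suffices for a hamiltonian (hence spanning eulerian) cycle, the family must genuinely exploit the absence of some edges between vertices in the same partite class. The obstruction to being supereulerian should be a counting/parity argument: some vertex (or small set of vertices) is forced to be traversed in a way that is incompatible with alternation, or with covering all vertices simultaneously.

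First I would fix a base gadget: a small complete multipartite graph $G_0$ that is colour-connected, admits an alternating cycle factor, but has no spanning closed alternating trail, and then take disjoint-ish combinations (or blow-ups) of $G_0$ to get the infinite family. A promising shape is to take a partite class $\{a,b\}$ of size two whose two vertices both see the rest of the graph, but arrange the colours so that any alternating closed trail through $a$ and through $b$ must ``use up'' a bottleneck vertex more times than its degree allows. Concretely, one can imitate the structure behind Figure~\ref{fig:halfM} / Figure~\ref{fig:notcc}: have a central independent set whose vertices each have exactly two incident edges (one of each colour) so they can be visited only once, and route the required alternating closed trail so that covering all vertices would force revisiting such a vertex. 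To make it complete multipartite one places each low-degree vertex in its own large partite class whose ``missing'' edges (to its own class) are exactly the ones that would otherwise give extra freedom. To get infinitely many examples, replace a single vertex by an independent set of arbitrary size $t\geq 1$ (an extension within the complete multipartite world) or chain $m$ copies of the gadget together along a shared structure; I would check that colour-connectivity and the cycle factor survive blow-up (colour-connectivity does by Proposition~\ref{prop:extcc}, and a cycle factor can be exhibited explicitly).

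The verification then has three parts. (i) \emph{Colour-connected:} exhibit for each ordered pair $u,v$ and each colour $c$ an alternating $(u,v)$-path starting with colour $c$, and invoke Lemma~\ref{lem:ccsuff}; by Proposition~\ref{prop:extcc} it suffices to do this for the base gadget and then all its extensions are colour-connected. (ii) \emph{Alternating cycle factor:} write down an explicit collection of vertex-disjoint alternating cycles covering $V(G)$ — typically two or three short alternating cycles, one per ``block'' of the construction, scaled appropriately in the blow-up. (iii) \emph{Not supereulerian:} this is the crux. I would argue by contradiction: suppose $T$ is a spanning closed alternating trail. Look at a designated bottleneck vertex $w$ of degree $2$ with one edge of each colour; $T$ must use both edges at $w$, so $T$ passes through $w$ exactly once, entering on one colour and leaving on the other. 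Tracking where that forces $T$ to go next, combined with the requirement that $T$ also covers the vertices $a$ and $b$ of the special size-two class (which, because of the missing edge $ab$, can only be reached through a limited set of neighbours), yields a contradiction — e.g.\ $T$ would have to traverse $w$ twice, or $T$ would fail to alternate at some vertex, or $T$ would miss a vertex.

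\textbf{Main obstacle.} The delicate point is item (iii): designing the gadget so that the parity/degree obstruction is airtight while \emph{simultaneously} keeping (i) colour-connectivity and (ii) the existence of an alternating cycle factor — these two want ``enough'' edges and flexibility, whereas the non-supereulerian obstruction wants a rigid bottleneck. Getting a small complete multipartite graph (rather than a general 2-edge-coloured graph, where Figure~\ref{figce2} already does the job) that threads this needle is the real content; once the base gadget is pinned down, the passage to an infinite family is routine (blow up one partite class, or take a suitable edge-disjoint amalgamation of $m$ copies sharing the special pair $\{a,b\}$, and re-verify (i)–(iii), with (i) essentially free via Proposition~\ref{prop:extcc} and (ii) by scaling the explicit cycles).
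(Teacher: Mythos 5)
Your proposal is a plan rather than a proof: the statement is purely existential, so its entire content is the explicit construction, and that is exactly the part you defer ("once the base gadget is pinned down, the passage to an infinite family is routine"; "the real content"). Nothing in the write-up actually exhibits a complete multipartite graph with the three required properties, nor an argument for item (iii) beyond a hoped-for contradiction, so as it stands the statement is not proved. Moreover, the specific mechanism you sketch — a bottleneck vertex of degree $2$ with one edge of each colour, plus a size-two class $\{a,b\}$ — sits badly with the complete multipartite setting: a vertex of degree $2$ forces all but two vertices of the graph into its own partite class, and then an alternating cycle factor can cover at most two vertices of that class (only one cycle may use the two outside vertices), so this shape cannot yield arbitrarily large members of the family. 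The tension you correctly identify between "enough edges for colour-connectivity and a cycle factor" and "a rigid obstruction" is resolved in the paper not by a degree bottleneck but by a colour-orientation bottleneck.

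Concretely, the paper's family (Figure \ref{fig:CMGfig1}) is complete $3$-partite: a left block $B$ carrying an alternating hamiltonian cycle with two special vertices $z_1,z_2$ (the edge $z_1z_2$ is the unique red edge at $z_1$ and, in any spanning closed alternating trail, the unique usable red edge at $z_2$), and a right block $X\cup Y$ carrying its own alternating hamiltonian cycle, where \emph{all} edges from $X$ to $B$ are blue and \emph{all} edges from $Y$ to $B$ are red. Colour-connectivity and the cycle factor are immediate from the two hamiltonian cycles of the blocks, and the infinitude comes from letting the blocks grow. The non-supereulerian argument is a "one-way door": if a spanning closed alternating trail ever crosses from $B$ into $X\cup Y$, alternation together with the monochromatic attachment of $X$ and of $Y$ to $B$ forces it to shuttle between $X$ and $Y$ forever and never return to $B$, contradicting closedness. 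If you want to salvage your approach, this is the kind of obstruction you should aim for — one based on which colours are available on the edges leaving a whole block, not on low vertex degrees, since the latter cannot survive in large complete multipartite graphs that must also admit an alternating cycle factor.
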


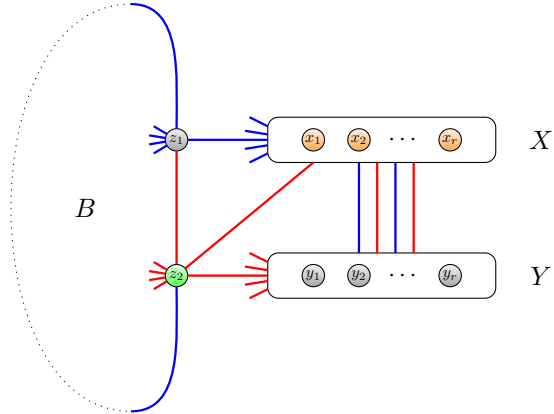
\begin{figure}[H]
\begin{center}
\begin{tikzpicture}[scale=0.6]

\node (z1) at (3,7) [vertexX] {$z_1$};  % At (5,4) put a node and write $x_2$ inside it. The node is called x2
\node (z2) at (3,4) [vertexY] {$z_2$};  % At (7,4) put a node and write $x_3$ inside it. The node is called x3

\LineBLUE{(z1) -- (2.5,7.3);}
\LineBLUE{(z1) -- (2.4,7.1);}
\LineBLUE{(z1) -- (2.4,6.9);}
\LineBLUE{(z1) -- (2.5,6.7);}

\LineRED{(z2) -- (2.5,4.3);}
\LineRED{(z2) -- (2.4,4.1);}
\LineRED{(z2) -- (2.4,3.9);}
\LineRED{(z2) -- (2.5,3.7);}

\LineBLUE{(z1) to [out=90, in=0] (2,10);}
\LineRED{(z1) -- (z2);}
\LineBLUE{(z2) to [out=270, in=0] (2,1);}
\draw[dotted] (2,10) to [out=180, in=180] (2,1); 

\draw [rounded corners] (5,6.5) rectangle (10,7.5); 
\draw [rounded corners] (5,3.5) rectangle (10,4.5);  

\LineRED{(5,4.3) -- (4.6,4.5);}
\LineRED{(5,4.12) -- (4.5,4.2);}
\LineRED{(5,3.88) -- (4.5,3.8);}
\LineRED{(5,3.7) -- (4.6,3.5);}

\LineBLUE{(5,7.3) -- (4.6,7.5);}
\LineBLUE{(5,7.12) -- (4.5,7.2);}
\LineBLUE{(5,6.88) -- (4.5,6.8);}
\LineBLUE{(5,6.7) -- (4.6,6.5);}

\node () at (1,5.5) {$B$};
\node () at (11,7) {$X$};
\node () at (11,4) {$Y$};
\node (x1) at (6,7) [vertexZ] {$x_1$};
\node (x2) at (7,7) [vertexZ] {$x_2$};
\node (xr) at (9,7) [vertexZ] {$x_r$};
\node [scale=0.9] at (8,7) {$\cdots$}; 

\node (y1) at (6,4) [vertexX] {$y_1$};
\node (y2) at (7,4) [vertexX] {$y_2$};
\node (yr) at (9,4) [vertexX] {$y_r$};
\node [scale=0.9] at (8,4) {$\cdots$};

\LineRED{(z2) -- (5,4);}
\LineRED{(z2) -- (6,6.5);}
\LineBLUE{(z1) -- (5,7);}

\LineBLUE{(7,4.5) -- (7,6.5);}
\LineRED{(7.4,4.5) -- (7.4,6.5);}
\LineBLUE{(7.8,4.5) -- (7.8,6.5);}
\LineRED{(8.2,4.5) -- (8.2,6.5);}

\end{tikzpicture} 
\end{center}
\caption{An infinite family ${\cal G}$ of  2-edge-coloured complete 3-partite graphs which are colour-connected and have an alternating cycle factor but are not supereulerian. The 3 partite sets are indicated in colours grey, orange and green. The left part is a 2-edge-coloured complete bipartite graph with an alternating  hamiltonian cycle indicated. The grey vertices belong to the same colour class as the vertices in $Y$. Every vertex in $X$ is connected by blue edges to all vertices of $B$ and every vertex in $Y$ is connected by red edges to the green vertices in $B$. The two special vertices $z_1,z_2$ are joined by a blue edge and all other edges incident to $z_1$ ($z_2$) in $B$ are blue (red). The complete bipartite subgraph induced by 
$X\cup Y$ has an alternating hamiltonian cycle $x_1y_1x_2y_2\ldots{}x_ry_rx_1$ and all other edges can be coloured arbitrarily red or blue. Se Figure \ref{fig:CMGex} below for a specific example of a graph in ${\cal G}$. }\label{fig:CMGfig1}
\end{figure}

\begin{figure}[H]
\begin{center}
\begin{tikzpicture}[scale=0.5]

\node (z1) at (3,10) [vertexX] {$z_1$};  
\node (z2) at (3,7) [vertexY] {$z_2$};  
\node (z3) at (3,4) [vertexX] {$z_3$};  
\node (z4) at (3,1) [vertexY] {$z_4$};  

\LineBLUE{(z1) to [out=190, in=170] (z4);}
\LineRED{(z1) -- (z2);}
\LineBLUE{(z2) -- (z3);}
\LineRED{(z3) -- (z4);}

\node (x1) at (10,10) [vertexZ] {$x_1$};
\node (y1) at (10,7) [vertexX] {$y_1$};
\node (x2) at (10,4) [vertexZ] {$x_2$};
\node (y2) at (10,1) [vertexX] {$y_2$};

\LineBLUE{(x1) -- (z1);}
\LineBLUE{(x1) -- (z3);}
\LineBLUE{(x1) -- (z4);}
\LineBLUE{(x2) -- (z1);}
\LineBLUE{(x2) -- (z3);}
\LineBLUE{(x2) -- (z4);}

\LineRED{(x1) -- (z2);}
\LineRED{(x2) -- (z2);}

\LineRED{(y1) -- (z2);}
\LineRED{(y2) -- (z2);}
\LineRED{(y1) -- (z4);}
\LineRED{(y2) -- (z4);}

\LineBLUE{(x1) to [out=350, in=10] (y2);}
\LineRED{(x1) -- (y1);}
\LineBLUE{(y1) -- (x2);}
\LineRED{(x2) -- (y2);}

\end{tikzpicture}
\end{center}
\caption{}\label{fig:CMGex}
\end{figure}
\begin{proof}
Let $G$ be a 2-edge-coloured complete 3-partite graph from the infinite family described in Figure \ref{fig:CMGfig1}. 
Every  alternating spanning eulerian subgraph, $H$, must use the edge $z_1z_2$ as it is the only red edge incident with $z_1$.
As there is only one blue edge incident with $z_2$, the edge $z_1z_2$ is the only red incident with $z_2$ in $H$.
Let $uv$ be any edge in $H$ from $ u \in V(G)\setminus (X \cup Y)$ to $ v\in X \cup Y$. 
Either $uv$ is blue and $v \in X$ or $uv$ is red and $v \in Y$ (by the above).
Without loss of generality assume that $uv$ is blue and $v \in X$.
It is not difficult to see that the successor of $v$ in $H$ lies in $Y$ and the successor of this vertex is back in $X$, etc.
As we cannot return to $V(G)\setminus (X \cup Y)$, we obtain a contradiction.

\end{proof}

Despite the existence of the class ${\cal G}$ we still believe that one can recognize supereulerian 2-edge-coloured complete multipartite graphs in polynomial time.
\begin{conjecture}
  There exists a polynomial algorithm for deciding whether a 2-edge-coloured complete multipartite graph is supereulerian.
  \end{conjecture}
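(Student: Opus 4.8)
The plan is to reduce the decision problem to the search for a special spanning subgraph and then to isolate exactly the obstruction exhibited by the family $\mathcal G$ of Proposition~\ref{prop:CMGccnoEuler}. Recall first that a 2-edge-coloured graph $G$ on at least two vertices is supereulerian precisely when it has a connected spanning subgraph $H$ in which $d^H_1(v)=d^H_2(v)\geq 1$ for every vertex $v$, where $d^H_i(v)$ is the number of colour-$i$ edges at $v$ in $H$ (a ``balanced'' connected spanning subgraph); this is just Euler's theorem applied to 2-edge-coloured graphs. By Lemma~\ref{lem:findEF} one can find in polynomial time a spanning subgraph that is a vertex-disjoint union of balanced connected subgraphs (an eulerian factor), and by Proposition~\ref{prop:CMGEulercc} a further necessary condition, colour-connectivity, is polynomially checkable. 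So the entire difficulty is the global connectivity, and the natural line of attack is to start from a minimum eulerian factor and try to merge its pieces, exactly as in the proof of Theorem~\ref{thm:exMclEchar}.

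The key new point is that a 2-edge-coloured complete multipartite graph is an extension of a (necessarily M-closed) 2-edge-coloured complete graph exactly when the edges between any two parts are monochromatic, and in that case Corollary~\ref{mainthm} already decides the problem; so the genuinely new phenomenon is the presence of bichromatic ``between-parts'' edges, which the family $\mathcal G$ shows can really obstruct supereulerianity. Concretely I would take a minimum eulerian factor $G_1,\dots,G_k$, fix spanning closed alternating trails $T_1,\dots,T_k$, and classify all colour patterns on the edges joining $V(T_i)$ to $V(T_j)$ for which $G[V(T_i)\cup V(T_j)]$ has no spanning closed alternating trail. This should yield a ``domination'' relation $T_i\to T_j$ in the spirit of Lemma~\ref{lem:dom} and Lemma~\ref{lem:2trails}, but now $\to$ need not be a tournament (two trails visiting a common part are not completely joined, so they may be incomparable). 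One then analyses the resulting ``merge digraph'' on $\{T_1,\dots,T_k\}$: if it has a short directed cycle, or a trail exhibiting ``mixed'' behaviour, a merge is possible and contradicts the minimality of $k$ (as in Figures~\ref{fig:3cycle} and~\ref{fig:TT3}); otherwise one should be able to read off a structural obstruction, prove it is present in \emph{every} non-supereulerian instance, and recognise it in polynomial time — it should have the flavour of a colour-sensitive edge cut partitioning $V(G)$ into ``blocks'' across which every alternating trail is forced to exit on a prescribed colour, so that its detection reduces to polynomially many reachability/flow computations, in the spirit of the gadget construction in Theorem~\ref{thmTRAIL}.

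The hardest part, I expect, is twofold. First, the structural classification of unmergeable trail pairs is strictly more involved than in the M-closed case, because both the bichromatic between-parts edges and the non-edges inside $V(T_i)\cup V(T_j)$ coming from trails that share a part multiply the cases; getting a clean ``domination'' dichotomy out of this is where most of the combinatorial work lies. Second, and more seriously, the eulerian factor is not canonical: unlike alternating cycle factors, where Saad's theorem (Theorem~\ref{thm:saad}) and the M-closed machinery pin down a well-behaved minimum, here a merge between $G_i$ and a neighbouring component may only become available after re-routing $T_i$ inside $G_i$ using non-edges within a part, so ``minimum $k$'' need not be the right invariant and the induction must be set up with a secondary measure (e.g. minimise $k$, then total trail length, or work with a carefully chosen canonical factor). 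Consequently the final algorithm is likely to interleave a structural reduction with repeated calls to matching/flow subroutines of the kind behind Corollary~\ref{cor:compbip} and Lemma~\ref{lem:findEF}, and proving that only polynomially many such rounds are needed — together with the structure theorem for the obstruction — is the crux of the whole argument.
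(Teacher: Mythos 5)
The statement you are addressing is posed in the paper only as a conjecture: the paper gives no proof, and the characterization of supereulerian 2-edge-coloured complete multipartite graphs is left explicitly open (indeed Proposition~\ref{prop:CMGccnoEuler} is included precisely to show that the natural candidate characterization, colour-connectivity plus an eulerian factor, fails here). Your text is a research programme rather than a proof, and it contains no argument that closes the problem. The genuine gaps are exactly the ones you flag yourself: you never establish the analogue of Lemma~\ref{lem:dom}/Lemma~\ref{lem:2trails} for pairs of trails that meet a common part (where completeness between the trails fails and bichromatic between-part edges appear), you never prove that the proposed ``merge digraph'' analysis terminates in a structure that characterizes \emph{all} non-supereulerian instances, and you concede that the minimum-$k$ induction underpinning Theorem~\ref{thm:exMclEchar} may not even be the right invariant because eulerian factors are not canonical and merges may require re-routing a trail inside its own component. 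Each of these is the actual mathematical content that would be needed; asserting that a ``colour-sensitive cut'' obstruction ``should'' exist and ``should'' be detectable by flow computations is a conjecture layered on a conjecture, not a step in a proof.

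One smaller caution: your observation that Corollary~\ref{mainthm} settles the case where all edges between any two parts are monochromatic is correct (such graphs are precisely extensions of 2-edge-coloured complete graphs, which are M-closed), and it is a reasonable starting point. But reducing the general complete multipartite case to that case is precisely where the family ${\cal G}$ of Figure~\ref{fig:CMGfig1} shows new obstructions arise, and your sketch does not explain how those obstructions are certified or excluded in polynomial time. As it stands, the proposal identifies the right difficulties but proves nothing beyond what the paper already establishes; the conjecture remains open.
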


%\bibliography{refs}

\end{document}